\providecommand{\noopsort[1]{}}
\numberwithin{equation}{section}
\newtheorem{thm}{Theorem}[section]
\newtheorem{cor}[thm]{Corollary}
\newtheorem{prop}[thm]{Proposition}
\newtheorem{lem}[thm]{Lemma}
\theoremstyle{remark}
\newtheorem{rem}[thm]{Remark}
\newtheorem{example}[thm]{Example}
\theoremstyle{definition}
\renewcommand{\Re}{{\rm Re}\,}
\newcommand{\eps}{\varepsilon}
\newcommand{\one}{\mathbbm{1}}
\newcommand{\sol}{{\rm{sol}}}
\newcommand{\n}{\Vert}
\newcommand{\form}[3]{\ifthenelse{\equal{#2}{}}{\mbox{$ #1\Big[\, \cdot\,  , \, \cdot\,  \Big]$}}{
\mbox{$ #1\Big[ #2 , #3 \Big]$}}}
\newcommand{\qform}[2]{\ifthenelse{\equal{#2}{}}{\mbox{$ #1\Big[\cdot \Big]$}}{
\mbox{$ #1\Big[ #2 \Big]$}}}
\newcommand{\ip}[2]{\ifthenelse{\equal{#1}{}}{\mbox{$ \Big( \,\cdot\; \vline \; \cdot \, \Big) $}}{
\mbox{$ \Big( #1 \;  \vline \; #2 \Big)$}}}
\newcommand{\norm}[1]{\ifthenelse{\equal{#1}{}}{\mbox{$\|\cdot\|$}}{\mbox{$\| #1 \|$}}}
\newcommand{\betr}[1]{\ifthenelse{\equal{#1}{}}{\mbox{$|\cdot|$}}{\mbox{$| #1 |$}}}
\newcommand{\dual}[2]{\ifthenelse{\equal{#1}{}}{\mbox{$ \Big\langle \,\cdot\; , \; \cdot \, \Big\rangle $}}{
\mbox{$ \Big\langle #1 \;  , \; #2 \Big\rangle$}}}
\newcommand{\pdual}[2]{\ifthenelse{\equal{#1}{}}{\mbox{$ \Big[ \,\cdot\; , \; \cdot \, \Big] $}}{
\mbox{$ \Big[ #1 \;  , \; #2 \Big]$}}}
\newcommand{\bdual}[2]{\ifthenelse{\equal{#1}{}}{\mbox{$ \Big\langle \,\cdot\; , \; \cdot \, \Big\rangle_* $}}{
\mbox{$ \Big\langle #1 \;  , \; #2 \Big\rangle_*$}}}
\newcommand{\CR}{\mathds{R}}
\newcommand{\CN}{\mathds{N}}
\newcommand{\OCN}{\overline{\mathds{N}}}
\renewcommand{\P}{\mathbb{P}}
\newcommand{\A}{\mathcal{A}}
\newcommand{\bS}{{S}}
\newcommand{\cF}{\mathscr{F}}
\newcommand{\expect}{\mathbb{E}}
\newcommand{\cL}{\mathscr{L}}
\newcommand{\half}{\frac{1}{2}}
\newcommand{\inject}{\hookrightarrow}
\def\N{{\mathds N}}
\def\R{{\mathds R}}
\newcommand{\E}{{\mathbb E}}
\renewcommand{\P}{{\mathbb P}}
\newcommand{\F}{{\mathscr F}}
\newcommand{\OO}{\mathcal{O}}
\renewcommand{\a}{\alpha}
\renewcommand{\b}{\beta}
\newcommand{\e}{\varepsilon}
\renewcommand{\O}{\Omega}
\newcommand{\Dom}{{\mathsf D}}
\newcommand{\g}{\gamma}
\newcommand{\embed}{\hookrightarrow}
\newcommand{\lb}{\langle}
\newcommand{\rb}{\rangle}
\newcommand{\limn}{\lim_{n\to\infty}}
\newcommand{\bX}{X}
\newcommand{\bY}{Y}
\let\mathcal \undefined
\def\mathcal{\mathscr}
\let\emptyset \undefined
\let\ge       \undefined
\let\le       \undefined
\let\leq\le
\let\geq\ge
\begin{document}

\title[Stochastic reaction diffusion type equations]
{Continuous dependence on the coefficients \\ and global existence for stochastic \\ reaction diffusion equations}

\author{Markus Kunze}
\author{Jan van Neerven}

\address{Delft Institute of Applied Mathematics, Delft University of Technology, P.O. Box 5031, GA Delft, The Netherlands}
\email{M.C.Kunze@TuDelft.nl, J.M.A.M.vanNeerven@TuDelft.nl}

\date{\today}

\thanks{The authors are supported by VICI subsidy 639.033.604
of the Netherlands Organisation for Scientific Research (NWO)}

\subjclass[2000]{To be completed}

\keywords{Stochastic evolution equations, continuous dependence on the coefficients, global existence,
stochastic reaction diffusion equations}

\begin{abstract} We prove convergence of the solutions $\bX_n$ of 
semilinear stochastic evolution equations on a Banach space $B$, 
driven by a cylindrical Brownian motion in a Hilbert space $H$,
\[
 \begin{aligned}
  dX_n(t) & = (A_nX(t) + F_n(t,X_n(t)))\,dt + G_n(t,X_n(t))\,dW_H(t), \\
 X_n(0) & = \xi_n,
\end{aligned}
\]
assuming that the operators $A_n$ converge to $A$ and the locally Lipschitz functions
$F_n$ and $G_n$ converge to the locally Lipschitz functions $F$ and $G$ in an appropriate sense. 
Moreover, we obtain estimates
for the lifetime of the solution $\bX$ of the limiting problem in terms of the lifetimes
of the approximating solutions $\bX_n$. 

We apply the results to prove global existence for reaction diffusion equations
with multiplicative noise and a polynomially bounded reaction term satisfying suitable dissipativity
conditions. The operator governing the linear part of the equation can be an arbitrary uniformly elliptic
second order elliptic operator.  
\end{abstract}

\subjclass{60H15 (primary); 35K37, 35A35, 35R60 (secondary)}

\maketitle

\section{Introduction}

The aim of this paper is to address the problem of continuous dependence upon the `data'
$A$, $F$, $G$, and $\xi$, of the solutions of semilinear stochastic evolution equations of the form
\begin{equation}\label{eq.scp1}\tag{SCP}\begin{aligned}
  dX(t) & = (AX(t) + F(t,X(t)))\,dt + G(t,X(t))\,dW_H(t), \\
 X(0) & = \xi,
\end{aligned}
\end{equation}
where $A$ is an unbounded linear operator on a Banach space $E$,
$W_H$ is a cylindrical Brownian motion in a Hilbert space $H$,
and $F$ and $G$ are locally Lipschitz continuous coefficients.
This continues a line of research initiated in \cite{KvN10} where the case
of globally Lipschitz continuous coefficients was considered. 
Convergence of solutions in the locally Lipschitz case considered in the present article was posed 
as an open problem in \cite{b97}.

In order to outline our approach,
we start by briefly recalling how a solution $\bX = \sol(A,F,G,\xi)$
of equation \eqref{eq.scp1}  may be found in the case of locally Lipschitz continuous coefficients
(see \cite{b97, vNVW08, s93}).

For each $r >0$ one picks functions $F^{(r)}$ and $G^{(r)}$ which are globally 
Lipschitz continuous 
and of linear growth and which coincide with $F$ and $G$ on the ball $B^{(r)} = \{x\in E: \ \n x\n\le r\}$. 
Then, denoting by $X^{(r)}$ the solution of \eqref{eq.scp1}
with $F$ and $G$ replaced with $F^{(r)}$ 
and $G^{(r)}$ respectively, one proves that with 
$$\tau^{(r)}:= \inf\{ t>0\, : \, \|X_n(t)\| > r\}$$
one has $\bX^{(r)} \equiv \bX^{(s)}$ on $[0, \tau^{(r)}]$  
for all $0<s\leq r$. In  particular, $\tau^{(r)}$ increases with $r$.
One then defines $\sigma := \lim_{r\to\infty} \tau^{(r)}$ and, for $t\in [0,\tau^{(r)}]$, 
$X(t) = X^{(r)}(t)$.  It is then shown that $\bX$ is the maximal solution of the original problem
\eqref{eq.scp1}. The stopping time $\sigma$ is called the \emph{lifetime} of $\bX$.

Suppose now that we approximate the operator $A$ by a sequence of operators $A_n$, the 
coefficients $F$ and $G$ by a sequence of coefficients $F_n$ and $G_n$, and the initial value $\xi$ by
a sequence $\xi_n$. 
For each $r>0$ 
this gives rise to processes $\bX_n^{(r)}$ from which the solution 
$\bX_n = \sol(A_n,F_n,G_n,\xi_n)$ is constructed. By the above, one expects convergence 
$\bX_n^{(r)} \to \bX^{(r)}$ as $n\to \infty$ for each $r>0$, and hence
$\bX_n^{(r)} \to \bX$ as $n\to \infty$ up to suitable stopping times. 
The aim of this paper
is to describe a general procedure which allows one to deduce that, 
in these circumstances, one indeed obtains convergence $\bX_n \to \bX$ and the lifetime of $\bX$
can be computed explicitly in terms of the stopping times 
$$\tau_n^{(r)}:= \inf\{ t>0\, : \, \|X_n(t)\| > r\}.$$
This follows from a general convergence result for processes defined up to stopping times 
presented in Section \ref{sect.1}. 

Applications to stochastic evolution 
equations are presented in Section \ref{sect.2}.
In particular, we are able to identify situations in which the limiting process $\bX$
is globally defined when the processes $\bX_n$ have this property. 

An example 
where this happens arises in the theory of stochastic reaction diffusion equations. In Section
\ref{sect.3} we prove global existence for such equations assuming that the nonlinearity
$F$  is of polynomial growth and satisfies suitable dissipativity assumptions and 
that $G$ is locally Lipschitz and of linear growth. This improves previous results 
due to Brze\'zniak and
G{\c{a}}tarek \cite{bg99} and Cerrai \cite{Cerrai} in various ways. Indeed, in our framework, the 
operator $A$ governing the linear part of the equations can be an arbitrary uniformly 
elliptic second-order operator. For such operators $A$, martingale solutions were obtained
in \cite{bg99} for polynomially bounded $F$ and uniformly bounded $G$. 
Assuming rather restrictive simultaneous diagonisability conditions on $A$ and the driving
noise, in \cite{Cerrai} global mild solutions were obtained for polynomially bounded $F$ and 
certain unbounded nonlinearities $G$. 

In Section \ref{sect.2} and \ref{sect.3} we extend these results by proving global existence
of mild solutions under the same growth assumptions on $F$ and $G$
as in \cite{Cerrai} but without any diagonisability assumptions on $A$ and the noise
process whatsoever. Although our approach combines certain essential features of 
\cite{Cerrai} with a Gronwall type lemma in the spirit of \cite{bg99}, the
the abstract results of Section \ref{sect.1} streamline the proof 
considerably. 
In the final section \ref{sect.4} we write out our results for the special case of a
1D stochastic reaction diffusion equations driven by white noise and comment
on possible variations in higher dimensions.

Notations and terminology are standard and follow those of \cite{KvN10}.
Throughout this article we fix probability space $(\Omega, \F, \P)$ endowed with a filtration
$\mathbb{F} = (\F_t)_{t\in [0,T]}$, where $0<T<\infty$ is a finite 
time horizon. Unless stated otherwise, all processes considered are defined
on this probability space, and adaptedness is understood relative to $\mathbb{F}$.
We work over the real scalar field, but occasional sectoriality arguments require
passage to complexifications; this will be done without further notice.

\section{Convergence of locally defined processes}\label{sect.1}

We begin by proving a general convergence result 
for sequences of processes defined up to certain stopping times.  For 
each $n \in \OCN := \CN \cup \{\infty\}$, a continuous adapted process
$\bX_n =(X_n(t))_{t \in [0,\sigma_n)}$ with values in a Banach space $E$ is given. 
Here, $\sigma_n: \O\to (0,T]$ denotes the explosion time of $\bX_n$, 
i.e., on the set  $\{\sigma_n < T\}$ we have
$\limsup_{t \uparrow \sigma_n}\|X_n(t)\| = \infty$. 
For each $r>0$ we set
$$\rho_n^{(r)} := \inf\big \{ t \in (0, \sigma_n) : \|X_n(t)\| > r\big\}$$
with the convention  $\inf(\emptyset)=T$.
Furthermore, we assume that for each $r>0$ we are given a globally defined, continuous, adapted process 
$\bX_n^{(r)} = (X_n^{(r)}(t))_{t \in [0,T]}$ such that the following 
conditions are satisfied:

\begin{itemize}
 \item[(a)] For all $n\in \OCN$ and $r>0$, almost surely
\[ \bX_n^{(r)}\one_{[0, \rho_n^{(r)}]} = \bX_n\one_{[0,\rho_n^{(r)}]} \ \  \mbox{on}\,\,[0,T],\]
 \item[(b)] For all $r>0$, 
\[ \limn \bX_n^{(r)} = \bX_\infty^{(r)} \ \  \mbox{in}\,\, L^0(\Omega; C([0,T]; E)).\]
\end{itemize}

Note that in (a), on the set $\{\rho_n^{(r)}=0\}$ we do require $\bX_n^{(r)}(0)= \bX_n(0)$
almost surely.
In the applications below, the processes 
$\bX_n$ are obtained by solving certain stochastic evolution equations 
with locally Lipschitz continuous
coefficients, and the processes 
$\bX_n^{(r)}$ are obtained as the solutions of the equations
with the same initial condition but with coefficients `frozen' outside the ball of radius $r$.

In the proofs of Theorem \ref{t.stopconv} and Lemma \ref{l.lem} we shall work with 
versions of $\bX_n$ and $\bX_n^{(r)}$ such that (a) holds everywhere on $\Omega$.
We denote by $B_{\rm b}([0,T];E)$ the Banach space of all bounded, strongly Borel 
measurable functions from $[0,T]$ to $E$.

\begin{thm}\label{t.stopconv}
Under the above assumptions, the following assertions hold.
\begin{enumerate}
 \item For all $r>0$ and $\eps >0$  we have, almost surely, 
\[ \liminf_{n\to\infty} \rho_n^{(r)} \leq \rho_\infty^{(r)} \leq 
\limsup_{n\to\infty} \rho_n^{(r+\eps )}\, .\]
Moreover, along every subsequence $n_k$ we can find a further subsequence $n_{k_j}$
for which we have, almost surely, 
\[ \limsup_{j\to\infty} \rho_{n_{k_j}}^{(r)} \leq \rho_\infty^{(r)} \leq 
\liminf_{j\to\infty} \rho_{n_{k_j}}^{(r+\eps )}\, .\]
 \item For all $r>0$ and $\eps >0$ we have
\[ \bX_n\one_{[0, \rho_\infty^{(r)}\wedge \rho_n^{(r+\eps)})} \to \bX_\infty\one_{[0,\rho_\infty^{(r)})}\quad
 \mbox{in}\,\, L^0(\Omega; B_{\rm b}([0,T]; E))\, .
\]
 \item We have $$\bX_n\one_{[0,\sigma_\infty \wedge \sigma_n)} \to \bX_\infty\one_{[0,\sigma_\infty )}\quad 
 \mbox{in}\,\, L^0(\Omega\times [0,T]; E).$$
\end{enumerate}
\end{thm}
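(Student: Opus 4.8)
The plan is to deduce part (3) from parts (1) and (2): pass from the stopped approximants $\bX_n\one_{[0,\rho_\infty^{(r)}\wedge\rho_n^{(r+\eps)}]}$, whose convergence is supplied by part (2), to the genuinely stopped processes $\bX_n\one_{[0,\sigma_\infty\wedge\sigma_n)}$, and then let $r\to\infty$. Write $Y_n := \bX_n\one_{[0,\sigma_\infty\wedge\sigma_n)}$ and $Y := \bX_\infty\one_{[0,\sigma_\infty)}$. Since $L^0(\Omega\times[0,T];E)$ is a metric space, it suffices to show that every subsequence of $(Y_n)$ admits a further subsequence converging to $Y$; and since $\int_0^T(\n Y_n(\cdot,t)-Y(\cdot,t)\n\wedge 1)\,dt\le T$, it is enough to produce, along such a sub-subsequence, the a.s. convergence $\int_0^T(\n Y_n(\cdot,t)-Y(\cdot,t)\n\wedge 1)\,dt\to 0$, after which bounded convergence yields convergence in the metric $d(f,g)=\E\int_0^T(\n f-g\n\wedge 1)\,dt$. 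Given a subsequence, I first extract a further subsequence along which the a.s. inequalities of part (1) hold for every $r\in\CN$ with $\eps=1$, and along which the $L^0(\Omega;B_{\rm b})$-convergence of part (2) is upgraded to a.s. uniform convergence for every $r\in\CN$ (convergence in probability gives a.s. convergence along a subsequence, and a diagonal extraction handles the countably many radii simultaneously).

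Fix an $\omega$ in the resulting full-measure set and an integer $r$. Inserting the intermediate processes $\bX_n\one_{[0,\rho_\infty^{(r)}\wedge\rho_n^{(r+1)}]}$ and $\bX_\infty\one_{[0,\rho_\infty^{(r)}]}$ and using the triangle inequality splits $\int_0^T(\n Y_n-Y\n\wedge 1)\,dt$ into three terms. The middle term, $\int_0^T(\n \bX_n\one_{[0,\rho_\infty^{(r)}\wedge\rho_n^{(r+1)}]}-\bX_\infty\one_{[0,\rho_\infty^{(r)}]}\n\wedge 1)\,dt$, is at most $T$ times the supremum norm of the difference and hence tends to $0$ as $n\to\infty$ by the a.s. uniform convergence from part (2). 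The last term equals $\int_{\rho_\infty^{(r)}}^{\sigma_\infty}(\n X_\infty(t)\n\wedge 1)\,dt$, since the indicator difference is supported on $(\rho_\infty^{(r)},\sigma_\infty)$; bounding the integrand by $1$ gives at most $\sigma_\infty-\rho_\infty^{(r)}$.

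The main obstacle is the first term. Here $\rho_\infty^{(r)}\le\sigma_\infty$ and $\rho_n^{(r+1)}\le\sigma_n$ (the latter because a blow-up forces the norm past every level strictly before $\sigma_n$ on $\{\sigma_n<T\}$), so the relevant time set is $[\rho_\infty^{(r)}\wedge\rho_n^{(r+1)},\sigma_\infty\wedge\sigma_n)$, on which no norm control is available and only the crude bound $\n X_n\n\wedge 1\le 1$ can be used; thus the first term is at most the length $(\sigma_\infty\wedge\sigma_n)-(\rho_\infty^{(r)}\wedge\rho_n^{(r+1)})$. This is precisely where part (1) enters pointwise in $\omega$: the inequality $\rho_\infty^{(r)}\le\liminf_n\rho_n^{(r+1)}$ gives $\rho_\infty^{(r)}\wedge\rho_n^{(r+1)}\to\rho_\infty^{(r)}$, and the same inequality together with $\rho_n^{(r+1)}\le\sigma_n$ yields, upon letting $r\to\infty$, that $\sigma_\infty\le\liminf_n\sigma_n$, whence $\sigma_\infty\wedge\sigma_n\to\sigma_\infty$. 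Consequently the $\limsup_n$ of the first term is at most $\sigma_\infty-\rho_\infty^{(r)}$.

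Combining the three estimates gives $\limsup_{n\to\infty}\int_0^T(\n Y_n-Y\n\wedge 1)\,dt\le 2(\sigma_\infty-\rho_\infty^{(r)})$ for the fixed $\omega$ and every $r$. Letting $r\to\infty$ and using $\rho_\infty^{(r)}\uparrow\sigma_\infty$ (which holds because $\bX_\infty$ explodes at $\sigma_\infty$) forces this $\limsup$ to vanish, giving the desired a.s. convergence along the sub-subsequence. Bounded convergence then upgrades this to convergence in $L^0(\Omega\times[0,T];E)$, and the subsequence principle delivers the claim for the full sequence.
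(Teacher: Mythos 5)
Your proposal only addresses assertion (3), and it does so by \emph{assuming} assertions (1) and (2). Since (1) and (2) are themselves part of the statement to be proved, this leaves the substantive content of the theorem unproven. In the paper, almost all of the work goes into (1) and (2): one needs Lemma \ref{l.lem} to pass between the truncated processes $\bX_n^{(r+\eps)}$ and the maximal processes $\bX_n$ (including the degenerate alternative $\rho_n^{(s)}=0$), a careful subsequence argument to convert the almost sure convergence of $\bX_{n_k}^{(4r)}$ into the one-sided bounds $\limsup_k\rho_{n_k}^{(r)}\le\rho_\infty^{(r)}\le\liminf_k\rho_{n_k}^{(r+\eps)}$, and then a separate argument (note the remark after the theorem about the instability of $\liminf$/$\limsup$ under passage to subsequences) to recover the stated inequalities for the full sequence. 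None of this appears in your write-up, so as it stands the proposal has a genuine gap: it proves the easiest third of the theorem from the hardest two thirds.

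That said, your argument for (3), taken on its own with (1) and (2) granted, is correct and is a legitimate variant of the paper's. The paper proves (3) by a diagonal extraction over the radii $j\in\CN$ followed by a pointwise argument: for each $(t,\omega)$ with $t<\sigma_\infty(\omega)$ one picks $j$ with $t<\rho_\infty^{(j)}(\omega)$, uses the second part of (1) to get $t<\rho_{n_l}^{(j+1)}(\omega)$ eventually, and concludes convergence at $(t,\omega)$. You instead estimate the $L^1$-in-time distance directly, splitting off the time interval $[\rho_\infty^{(r)}\wedge\rho_n^{(r+1)},\sigma_\infty\wedge\sigma_n)$ where only the crude bound $\n X_n\n\wedge 1\le 1$ is available, and showing its length tends to $\sigma_\infty-\rho_\infty^{(r)}$, which vanishes as $r\to\infty$ since $\rho_\infty^{(r)}\uparrow\sigma_\infty$. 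Your observations that $\rho_n^{(r+1)}\le\sigma_n$ (via the explosion property) and that the second assertion of (1), not the first, is what supplies $\rho_\infty^{(r)}\le\liminf_n\rho_n^{(r+1)}$ along the chosen subsequence are both correct and necessary. To complete the proof you must supply the arguments for (1) and (2) (or at least for the construction underlying them, which is where Lemma \ref{l.lem} and hypotheses (a) and (b) are actually used).
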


\begin{rem}
 Note that the inequalities in (1) involve the whole sequences $(\rho_n^{(r)})_{n\in\N}$ and
 $(\rho_n^{(r+\e)})_{n\in\N}$. For this reason we cannot pass to an almost surely uniformly
convergent subsequence in (b) and thereby reduce the theorem to
a statement about individual trajectories (and hence to a theorem on deterministic functions). 
Limes inferior and limes superior
are highly unstable with respect to passing to a subsequence; for example, the Haar functions $h_n$
on the unit interval satisfy $\liminf_{n\to\infty} h_n = -1$ and $\limsup_{n\to\infty} h_n = 1$,
but each subsequence has a further subsequence converging to $0$ pointwise almost everywhere.  
\end{rem}

In the proof of Theorem \ref{t.stopconv} we shall use the following lemma.

\begin{lem}\label{l.lem}
For all $n \in \OCN$, $r>0$, $\eps>0$, 
and $\tau \in (0,T]$ the following holds. 
If, for some $\omega\in \Omega$,  $\|X_n^{(r+\eps)}(t,\omega)\| \leq r$ 
for all $t \in [0,\tau]$, then at least one of the following holds:
\begin{itemize}
 \item[(i)] $X_n^{(r+\eps)}(t,\omega ) =
X_n(t,\omega )$ for all $t \in [0,\tau]$;
 \item[(ii)] $\rho_n^{(s)}(\omega) = 0$ 
for all $s\in [0,r+\eps)$.
\end{itemize}
\end{lem}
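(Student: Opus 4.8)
The plan is to analyze Lemma~\ref{l.lem} as a deterministic statement about a single trajectory $\omega$, since once $\omega$ is fixed everything reduces to comparing two continuous paths. The key observation is that the hypothesis $\|X_n^{(r+\eps)}(t,\omega)\| \leq r$ for all $t \in [0,\tau]$ means the trajectory of $\bX_n^{(r+\eps)}$ stays strictly inside the ball $B^{(r+\eps)}$ up to time $\tau$ (with room to spare, since $r < r+\eps$). I would then try to connect $\bX_n^{(r+\eps)}$ with $\bX_n$ via the defining property (a), which says $\bX_n^{(r+\eps)}$ and $\bX_n$ agree on $[0,\rho_n^{(r+\eps)}]$. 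The whole argument hinges on deciding whether $\rho_n^{(r+\eps)}(\omega) \geq \tau$ or not.

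First I would split into cases according to the value of $\rho_n^{(r+\eps)}(\omega)$. Suppose $\rho_n^{(r+\eps)}(\omega) > 0$. By definition of $\rho_n^{(r+\eps)}$ as the first time $\|X_n\|$ exceeds $r+\eps$, and using (a) on the interval where the two processes coincide, one should be able to argue that on $[0,\rho_n^{(r+\eps)}(\omega)]$ we have $X_n = X_n^{(r+\eps)}$, so the hypothesis $\|X_n^{(r+\eps)}\| \leq r < r+\eps$ keeps us safely below the threshold defining $\rho_n^{(r+\eps)}$. The crux is to propagate this agreement all the way to $\tau$: I would argue that if the agreement held only up to some time $\rho_n^{(r+\eps)}(\omega) < \tau$, then at that time $\|X_n^{(r+\eps)}\|$ would have to reach $r+\eps$ by continuity of the trajectory and the definition of $\rho_n^{(r+\eps)}$ (as the hitting time of the level $r+\eps$ by $\|X_n\| = \|X_n^{(r+\eps)}\|$), contradicting $\|X_n^{(r+\eps)}\| \leq r$ on $[0,\tau]$. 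This forces $\rho_n^{(r+\eps)}(\omega) \geq \tau$, whence $X_n^{(r+\eps)} = X_n$ on all of $[0,\tau]$ by (a), giving alternative (i).

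The remaining case is $\rho_n^{(r+\eps)}(\omega) = 0$. Here the two processes need not agree on any nondegenerate interval, and this is exactly where alternative (ii) must arise. Since $\rho_n^{(r+\eps)}(\omega) = 0$ means $\|X_n\|$ exceeds $r+\eps$ arbitrarily close to $t=0$ (or at $t=0$ itself, reading the infimum convention carefully), a monotonicity argument in the radius should show that $\rho_n^{(s)}(\omega) = 0$ for every $s \in [0, r+\eps)$: the smaller the threshold $s$, the sooner $\|X_n\|$ first exceeds it, so $\rho_n^{(s)} \leq \rho_n^{(r+\eps)} = 0$. I would need to handle the endpoint behavior and the $\inf(\emptyset)=T$ convention carefully, but the inequality $\rho_n^{(s)} \leq \rho_n^{(r+\eps)}$ for $s \leq r+\eps$ is immediate from the definitions, yielding (ii).

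I expect the main obstacle to be the careful treatment of the hitting-time definitions at the boundary, in particular reconciling the \emph{strict} inequality $\|X_n(t)\| > r$ in the definition of $\rho_n$ with the \emph{non-strict} bound $\|X_n^{(r+\eps)}\| \leq r$ in the hypothesis, and making sure the continuity-plus-intermediate-value argument genuinely rules out the trajectory touching the level $r+\eps$ prematurely. One must also be attentive to the fact that the processes are only defined up to the explosion time $\sigma_n$, so the comparison on $[0,\tau]$ implicitly requires $\tau < \sigma_n$ on the relevant set; the cleanest route is to note that wherever $\bX_n^{(r+\eps)}$ stays bounded by $r$ the corresponding portion of $\bX_n$ cannot have exploded, so this is automatic on the trajectories under consideration.
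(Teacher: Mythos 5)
Your proof is correct and follows essentially the same route as the paper's: both hinge on assumption (a) together with path continuity at the hitting time to rule out $0<\rho_n^{(r+\eps)}(\omega)<\tau$, and on the monotonicity of $s\mapsto\rho_n^{(s)}(\omega)$ to obtain alternative (ii). The only (cosmetic) difference is that you split on $\rho_n^{(r+\eps)}(\omega)=0$ versus $\rho_n^{(r+\eps)}(\omega)>0$ and derive the contradiction at the level $r+\eps$ itself, whereas the paper splits on $\rho_n^{(r+\eps)}(\omega)\ge\tau$ versus $\rho_n^{(r+\eps)}(\omega)<\tau$ and derives the contradiction at intermediate levels $s\in(r,r+\eps)$.
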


\begin{proof}
We distinguish two cases. 

{\em Case 1}: If $\rho_n^{(r+\eps)}(\omega) \ge \tau$, 
then $X_n^{(r+\eps)}(t,\omega ) =
X_n(t,\omega )$ for all $t \in [0,\tau]$ by assumption (a).

{\em Case 2}: Suppose that $\rho_n^{(r+\eps)}(\omega) < \tau$ and let $s \in (r, r+\eps)$. Assume that
$\rho_n^{(s)}(\omega ) >0$. By path continuity, $0<\rho_n^{(s)}<\rho_n^{(r+\eps)}<\tau$ and 
$\|X_n(\rho_n^{(s)}(\omega), \omega )\|= s$. By (a) the contradiction
$ s = \|X_n(\rho_n^{(s)}(\omega), \omega)\| =  \|X_n^{(r+\eps)}(\rho_n^{(s)}(\omega),\omega)\| \le r$
follows. Hence we must have $\rho_n^{(s)} = 0$. 
Since $\rho_n^{(s)}=0$ for $s \in (r, r+ \eps)$, we obviously have $\rho_n^{(s)}=0$ for all 
$s \in [0,r+\eps)$.
\end{proof}

\begin{proof}[Proof of Theorem \ref{t.stopconv}]

{\em Proof of (1)} -- 
We begin with the proof of the left-hand side inequality in first assertion.

Fix $r>0$.
By (b) we may pass to a subsequence which satisfies 
$\bX_{n_k}^{(4r)} \to \bX_\infty^{(4r)}$  in $C([0,T]; E)$ almost surely,
say for all $\omega$ is a set $\Omega'$ of full probability.
Our first aim is to prove that 
\begin{align}\label{eq:claim1}\limsup_{k\to\infty}\rho_{n_k}^{(r)} 
\leq \rho_\infty^{(r)}
\end{align} 
on $\Omega'$; noting that we could also have started from an arbitrary
subsequence, this will also give the left-hand side estimate in the 
second assertion of (1). 

Fix an $\omega\in\Omega'$.
We may assume that $\rho_\infty^{(r)}(\omega)<T$, since otherwise
\eqref{eq:claim1} holds trivially. 
Likewise we may assume that $\limsup_{k\to\infty}\rho_{n_k}^{(2r)}(\omega)>0$.
For if we had $\limsup_{k\to\infty}\rho_{n_k}^{(2r)}(\omega)=0$, then
certainly $\limsup_{k\to\infty}\rho_{n_k}^{(r)}(\omega)=0$ and again \eqref{eq:claim1} 
holds trivially. 

We claim that in this situation $\rho_\infty^{(2r)}(\omega) >0$. To prove the claim, observe that since we have
$\limsup_{k\to\infty}\rho_{n_k}^{(2r)}(\omega) >0$, there is a $\delta = \delta (\omega )>0$
so that, passing to a further subsequence $\rho_{n_{k_j}}^{(2r)}  = \rho_{n_{k_{j(\omega)}}}^{(2r)} $ 
possibly depending on $\omega$, 
we have
$\rho_{n_{k_j}}^{(2r)}(\omega) \geq \delta$ for all $j$. It follows from (a) that 
$X_{n_{k_j}}(\omega ) = X_{n_{k_j}}^{(4r)}(\omega )$ on $[0,\delta]$. Moreover, $X_{n_{k_j}}^{(4r)}$ 
converges to $X_\infty^{(4r)}(\omega )$, uniformly on $[0,\delta]$. Hence also $X_{n_{k_j}}(\omega)$
converges to $X_\infty^{(4r)}(\omega )$, uniformly on $[0,\delta]$. Now, since 
$\|X_{n_{k_j}}(t,\omega )\| \leq 2r$ for $t \in [0,\delta]$, it follows that $\|X_{n_{k_j}}^{(4r)}(t,\omega )\|\leq 2r$
for $t\in[0,\delta]$ which, by Lemma \ref{l.lem}, implies that $X_{n_{k_j}}(t,\omega ) = X^{(4r)}_{n_{k_j}}(t,\omega )$ for such 
$t$. By passing to the limit $j\to\infty$ we find $\|X_\infty (t,\omega )\|\leq 2r$ for $t \in [0,\delta]$ and thus $\rho_\infty^{(2r)}(\omega ) \geq \delta >0$.
This proves the claim.

We can now choose
a sequence $t_j (\omega) \downarrow \rho_\infty^{(r)}(\omega )$ such 
that $t_1(\omega) < \rho_\infty^{(2r)}(\omega)$ 
and $\|X_\infty (t_j(\omega ), \omega )\| > r$ for all $j$. 
Such a sequence exists by 
our assumption 
that $\rho_\infty^{(r)}(\omega )<T$, 
the definition of $\rho_\infty^{(r)}(\omega)$,   
and path continuity. 
For each $j$ there is an index $k_0(\omega,j)$ such that 
\[ \|\bX_{n_k}^{(4r)}(\omega ) -  \bX_\infty^{(4r)}(\omega )\|_{C([0,T]; E)} < 
\min\big\{ \|X_\infty(t_j(\omega ), \omega)\| -r, r\big\}\]  for 
all $k \geq k_0(\omega,j)$. 
For such $k$ we have $$\|X_{n_k}^{(4r)}(t,\omega)\| < 3r \ \hbox{ for all } 
0\leq t \leq \rho_\infty^{(2r)}(\omega).$$
To see this, note that if $0\leq t \leq \rho_\infty^{(2r)}(\omega)$,
then $\n X^{(4r)}_\infty (t, \omega)\n =  \n X_\infty (t, \omega)\n \le 2r$. 
Also, for all such $k$ we have
$$\|\bX_{n_k}^{(4r)}(t_j(\omega), \omega )\| > r.$$ 
By Lemma \ref{l.lem}, either $\|\bX_{n_k}(t_j(\omega),\omega)\| 
>r$ or $\rho_{n_k}^{(r)}(\omega) = 0$. 
Note that in both cases, 
$$\rho_{n_k}^{(r)}(\omega ) \leq t_j(\omega).$$ 
This being true for all $k\ge k_0(\omega,j)$, it follows that
$\limsup_{k\to \infty} \rho_{n_k}^{(r)}(\omega ) \leq t_j(\omega )$. 
Taking the infimum over $j$, we see that 
$\limsup_{k\to\infty}\rho_{n_k}^{(r)}(\omega) 
\leq \rho_\infty^{(r)}(\omega)$. This proves \eqref{eq:claim1}.

Now fix $\eta >0$. On the set 
$\bigcup_{m\in\CN}\bigcap_{n\geq m}\{ \rho_n^{(r)} 
\geq \rho_\infty^{(r)} + \eta\}$,
the above subsequence certainly satisfies 
$\limsup_{k\to\infty} \rho_{n_k}^{(r)} \geq \rho_\infty^{(r)} + \eta$.
But since \eqref{eq:claim1} holds on a set of full probability, this implies that
$\P (\bigcup_{m\in\CN}\bigcap_{n\geq m} 
\{ \rho_n^{(r)} \geq \rho_\infty^{(r)} + \eta\})= 0$. It follows that
\[ \P\big( \liminf_{n\to\infty} \rho_n^{(r)} \leq \rho_\infty^{(r)} + \eta\big) 
 \geq \P \big(\bigcap_{m\in\CN}\bigcup_{n\geq m} \{ \rho_n^{(r)} \leq 
\rho_\infty^{(r)} + \eta \}\big) = 1 \, .
\]
Upon letting $\eta \downarrow 0$ 
we have $\{ \liminf_{n\to\infty}\rho_n^{(r)} \leq \rho_\infty^{(r)}+ \eta\} 
\downarrow \{ \liminf_{n\to\infty} \rho_n^{(r)}
\leq \rho_\infty^{(r)}\}$, from which it 
follows that $\P(\liminf_{n\to \infty}\rho_n^{(r)} 
\leq \rho_\infty^{(r)})=1$.
\medskip 

Next we prove the right-hand side inequality of the first assertion in (1).

Fix $r>0$ and $\e>0$.
By (b) we may pass to a subsequence such that
$\bX_{n_k}^{(r+2\e)} \to \bX_\infty^{(r+2\e)}$ in $C([0,T]; E)$ almost surely,
say on the set $\Omega'$ of full probability. 
Our first aim is to prove that 
\begin{align}\label{eq:claim2}
\liminf_{k\to\infty}\rho_{n_k}^{(r+\eps)}\geq \rho_\infty^{(r)}
\end{align}
on $\Omega'$; noting that we could also have started from an arbitrary
subsequence, this will also give the right-hand side estimate in the 
second assertion of (1). 

Fix an $\omega\in\Omega'$.
We may assume that 
$\rho^{(r)}_\infty(\omega)>0$, for otherwise \eqref{eq:claim2}
trivially holds. 

The next step is to prove that $X_{n_k}(\omega)\to X_\infty(\omega)$ 
uniformly on $[0,\rho^{(r)}_\infty(\omega)]$.
On this interval we know that 
$\n X_{\infty}(\omega)\n \le r$. Hence, by (a), 
$X_{\infty}(\omega) = X_{\infty}^{(r+2\e)}(\omega)$
on $[0,\rho^{(r)}_\infty(\omega)]$.
Since $X_{n_k}^{(r+2\e)}(\omega) \to X_{\infty}^{(r+2\e)}(\omega)$ uniformly,
it follows that, for large enough $k$, say for all $k\ge k_1(\omega)$, 
\begin{align}\label{eq:extra}
\n X_{n_k}^{(r+2\e)}(\omega)\n \le r+\e \ \hbox{ on } \ [0,\rho^{(r)}_\infty(\omega)].
\end{align}
By \eqref{eq:extra} and Lemma \ref{l.lem}, for each $k\ge k_1(\omega)$ 
we are in at least one of the following two cases: 
either we have $\|X_{n_k}(\omega)\| \le r + \eps$ 
on $[0,\rho_\infty^{(r)}(\omega)]$ and 
thus $\rho_{n_k}^{(r+\eps)}(\omega) \geq \rho_\infty^{(r)}(\omega)$, or else 
$\rho_{n_k}^{(r+\eps)}(\omega) = 0$. 

Suppose the latter happens for infinitely many $k$  (the set of these $k$ may depend on $\omega$). Then 
$\|X_{n_k}(0,\omega)\| \geq r+\eps$ for infinitely many $k$.
Since
$$X_{n_k}(0,\omega) = X_{n_k}^{(r+2\e)}(0,\omega) \to X_\infty^{(r+2\e)}(0,\omega) 
=X_\infty (0,\omega)$$ 
this implies $\|X_\infty(0,\omega)\|
\geq r+\eps$. But then $\rho_\infty^{(r)}(\omega) = 0$ by path continuity, and this 
contradicts our previous assumption. 
Thus, for all but finitely many $k$ we must have the first alternative. 
This proves \eqref{eq:claim2}.

Fix $\eta >0$. Arguing as above, $\P (\bigcup_{m\in\CN}\bigcap_{n\geq m} 
\{ \rho_n^{(r+\eps)} \leq 
\rho_\infty^{(r)} - \eta \}) = 0$ and thus 
$\P (\limsup_{n\to \infty} \rho_n^{(r+\eps)} \geq \rho_\infty^{(r)} - \eta ) = 1$.
Upon letting $\eta \downarrow 0$ we see that $\P (\limsup_{n\to \infty} 
\rho_n^{(r+\eps)} \geq \rho_\infty^{(r)} ) = 1$.
\medskip

{\em Proof of (2)} --  Fix $r>0$ and $\eps>0$. Since convergence in probability is metrisable, 
it suffices to prove that 
every subsequence of $(\bX_n\one_{[0, \rho_\infty^{(r)}\wedge 
\rho_n^{(r+\eps)})})_{n\in\CN}$ has a further 
subsequence for which the claimed convergence holds. 

Given a subsequence, we may pass to a further subsequence
(which, for ease of notation, we index by $n$ again) such that 
 \begin{align}\label{eq:as}
  \bX_n^{(r+2\eps)} \to \bX_\infty^{(r+2\eps)} \ \ \hbox{in} \  C([0,T]; E) \ \hbox{ 
 almost surely. }
\end{align}
Fix an $\omega$ from the set of convergence. If $\rho_\infty^{(r)}(\omega)=0$,
 then it is trivial that 
 $ \bX_n\one_{[0, \rho_\infty^{(r)}\wedge \rho_n^{(r+\eps)})}(\omega) 
 \to \bX_\infty\one_{[0,\rho_\infty^{(r)})}(\omega)$, and therefore we may
 assume that $\rho_\infty^{(r)}(\omega)>0$. 
Then, as we have seen in the proof of the second assertion of (1), 
for all $n\geq n_0(\omega)$ we have 
$\|X_n(t,\omega)\|\leq r+\eps$ for all $0\leq t \leq \rho_\infty^{(r)}(\omega)$.
For these $n$ we see that $ \rho_n^{(r+\eps)}(\omega) \ge \rho_\infty^{(r)}(\omega) $
and therefore $\bX_n\one_{[0,\rho_\infty^{(r)}\wedge \rho_n^{(r+\eps)})}(\omega) 
= \bX_n\one_{[0,\rho_\infty^{(r)})}(\omega)$. 
Also, $X_n(t,\omega ) = X_n^{(r+2\eps)}(t,\omega)$ and $X_\infty (t,\omega) = 
X_\infty^{(r+2\eps)}(t,\omega)$ for $0\leq t\leq \rho_\infty^{(r)}(\omega) \wedge \rho_n^{(r+\eps)}(\omega)$.
Combining these observations with \eqref{eq:as} 
we find, for $n\ge n_0(\omega)$, 
\begin{align*}\bX_n\one_{[0,\rho_\infty^{(r)}\wedge 
\rho_n^{(r+\eps)})}(\omega) 
  = \bX_n\one_{[0,\rho_\infty^{(r)})}(\omega) 
& = \bX_n^{(r+2\e)}\one_{[0,\rho_\infty^{(r)})}(\omega)
\\ & \to \bX_\infty^{(r+2\e)} \one_{[0,\rho^{(r)}_\infty )}(\omega)
  = \bX_\infty \one_{[0,\rho^{(r)}_\infty )}(\omega)
\end{align*} 
in $B_{\rm b}([0,T]; E)$. 
\medskip 

{\em Proof of (3)} -- Again, we will show that every subsequence has a subsequence for which
the claimed convergence holds. 

Let a subsequence be given. 
By the proof of (2), this subsequence has a further subsequence $n_{k,1}$ such that
\[ \bX_{n_{k,1}}\one_{[0,\rho_\infty^{(1)}\wedge 
\rho_{n_{k,1}}^{(2)})}(\omega) \to \bX_\infty \one_{[0,\rho^{(1)}_\infty )}(\omega)\]
in $B_{\rm b}([0,T]; E)$ as $k\to \infty$, for all $\omega$ outside a set null set $N_1$.

Suppose we have already constructed a subsequence $n_{k,l}$ such 
that
\[ \bX_{n_{k,l}}\one_{[0,\rho_\infty^{(j)}\wedge 
\rho_{n_{k,l}}^{(j+1)})}(\omega) \to \bX_\infty 
\one_{[0,\rho^{(j)}_\infty )}(\omega)\]
in $B_{\rm b}([0,T]; E)$ as $k \to \infty$, 
 for all $j \in \{1,\ldots,l\}$ and all $\omega$ outside a null set $N_l$.
By the proof of (2), we can extract a further subsequence $n_{k,l+1}$ 
such that 
\[ \bX_{n_{k,l}}\one_{[0,\rho_\infty^{(j)}\wedge 
\rho_{n_{k,l+1}}^{(j+1)})}(\omega) \to \bX_\infty 
\one_{[0,\rho^{(j)}_\infty )}(\omega)\]
in $B_{\rm b}([0,T]; E)$ as $k \to \infty$,
for all $j \in \{1, \ldots,l,l+1\}$ and all $\omega$ outside a 
null set $N_{l+1}$. We continue this procedure inductively.

Now put $N := \bigcup_{l\geq 1} N_l$. 
Setting $n_l := n_{l,l}$, it follows that 
\begin{equation}\label{eq.diag}
 \bX_{n_l}\one_{[0,\rho_\infty^{(j)}\wedge 
\rho_{n_l}^{(j+1)})}(\omega) \to \bX_\infty \one_{[0,\rho^{(j)}_\infty )}(\omega)
\end{equation}
in  $B_{\rm b}([0,T]; E)$ as $l\to \infty$, for all $j\ge 1$
and $\omega$ outside the null set $N$.

By the second part of (1),
upon replacing $N$ by some larger null set and passing to a further subsequence of $n_l$ if necessary,
we may assume that outside $N$ we also have 
\begin{align}\label{eq:claim4}
\liminf_{l\to\infty}\rho_{n_l}^{(j+1)}(\omega)\ge  \rho_\infty^{(j)} (\omega) \ \hbox{ for all } \ j \ge 1.
\end{align} 
Now let $(t,\omega )\in [0,T]\times (\Omega\setminus N)$.
We claim that 
\[ X_{n_l}(t,\omega )\one_{[0,\sigma_\infty \wedge \sigma_{n_l})}(t,\omega) 
\to X_\infty (t,\omega)\one_{[0,\sigma_\infty )}(t,\omega) \]
in $E$ as $l \to \infty$. 

We distinguish two cases.
First, if $t\geq \sigma_\infty (\omega)$, then  
$$X_{n_l}(t,\omega)\one_{[0,\sigma_\infty \wedge \sigma_{n_l})}(t,\omega)
= 0 =  X_\infty (t,\omega)\one_{[0,\sigma_\infty )}(t,\omega)$$
for all $l\in\CN$ and 
there is nothing to prove. 

Second, suppose that $t < \sigma_\infty (\omega)$.
Pick an integer $j$ such that 
$\|X_\infty (s,\omega )\| < j$
for all $0\leq s \leq t$. Then $t< \rho_\infty^{(j)}(\omega)$. 
By \eqref{eq:claim4}, 
for all large enough $l$ we have $t< \rho_{n_l}^{(j+1)}(\omega)\le \sigma_{n_l}(\omega).$ 
Hence, for all large $l$, 
$$
  X_{n_l}(t,\omega)\one_{[0, \sigma_\infty\wedge \sigma_{n_l})}(t,\omega) 
= X_{n_l}(t,\omega) = X_{n_l}(t,\omega)\one_{[0,\rho_\infty^{(j)}
\wedge \rho_{n_l}^{(j+1)})}(t,\omega).
$$ 
By \eqref{eq.diag}, the right-hand side converges to 
$$X_\infty(t,\omega) = X_\infty(t,\omega)\one_{[
0,\rho_\infty^{(j)})}(t,\omega) = X_\infty(t,\omega)\one_{[
0,\sigma_\infty)}(t,\omega).$$ This proves the claim.
\end{proof}

\begin{cor}
Under the above assumptions we have $$\sigma_\infty \ge
\lim_{r\to\infty}\liminf_{n\to\infty}\rho_n^{(r)}$$ almost surely.
Furthermore, every subsequence $n_k$ has a further subsequence $n_{k_j}$ for which
 $$\sigma_\infty =
\lim_{r\to\infty}\liminf_{j\to\infty}\rho_{n_{k_j}}^{(r)}$$ almost surely.
\end{cor}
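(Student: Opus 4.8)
The plan is to derive the corollary from the monotonicity of $r\mapsto\rho_n^{(r)}$ together with the two-sided bounds of part (1) of Theorem~\ref{t.stopconv}, passing to the limit $r\to\infty$. The starting point is the elementary observation that for each fixed $n\in\OCN$ the stopping times $\rho_n^{(r)}$ are nondecreasing in $r$ and satisfy $\lim_{r\to\infty}\rho_n^{(r)}=\sigma_n$ for the chosen versions; indeed, if $\sigma_n<T$ then $\limsup_{t\uparrow\sigma_n}\|X_n(t)\|=\infty$ forces $\rho_n^{(r)}<\sigma_n$ with $\|X_n(\rho_n^{(r)})\|=r$ by path continuity for all large $r$, so any sublimit strictly below $\sigma_n$ would contradict continuity of $X_n$ on $[0,\sigma_n)$, and the case $\sigma_n=T$ is handled identically on $[0,T)$. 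In particular $\lim_{r\to\infty}\rho_\infty^{(r)}=\sigma_\infty$, and since $r\mapsto\liminf_{n}\rho_n^{(r)}$ is nondecreasing its limit as $r\to\infty$ coincides with the supremum over integer values of $r$.

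For the inequality I would apply the left-hand bound of the first assertion of part (1) at each integer $m\ge1$: on a set of full probability $\liminf_{n\to\infty}\rho_n^{(m)}\le\rho_\infty^{(m)}$. Intersecting the corresponding null sets over the countably many $m$, this holds simultaneously for all $m$ almost surely. Letting $m\to\infty$ and using $\rho_\infty^{(m)}\le\sigma_\infty$ then yields
\[ \lim_{r\to\infty}\liminf_{n\to\infty}\rho_n^{(r)}=\sup_{m}\liminf_{n\to\infty}\rho_n^{(m)}\le\sup_m\rho_\infty^{(m)}=\sigma_\infty \]
almost surely, which is the first claim.

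For the subsequential equality I would feed a given subsequence $n_k$ into the second assertion of part (1) and diagonalise over $r$. Applying that assertion with $\eps=1$ and $r=1,2,3,\dots$ successively produces nested subsequences $n_k\supseteq S_1\supseteq S_2\supseteq\cdots$ such that, for each $m$, the two-sided bound $\limsup_{\ell}\rho_{\ell}^{(m)}\le\rho_\infty^{(m)}\le\liminf_{\ell}\rho_{\ell}^{(m+1)}$ holds almost surely as $\ell\to\infty$ along $S_m$. The diagonal sequence $n_{k_j}$, obtained by taking the $j$-th member of $S_j$, has the property that its tail beyond index $m$ lies in $S_m$, so both inequalities hold for \emph{every} integer $m$ on a common set of full probability (the union of the countably many exceptional null sets). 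From $\liminf_j\rho_{n_{k_j}}^{(m)}\le\limsup_j\rho_{n_{k_j}}^{(m)}\le\rho_\infty^{(m)}$ I obtain $\lim_{r\to\infty}\liminf_j\rho_{n_{k_j}}^{(r)}\le\sigma_\infty$ exactly as above, while $\rho_\infty^{(m)}\le\liminf_j\rho_{n_{k_j}}^{(m+1)}\le\lim_{r\to\infty}\liminf_j\rho_{n_{k_j}}^{(r)}$ for every $m$ gives the reverse inequality $\sigma_\infty\le\lim_{r\to\infty}\liminf_j\rho_{n_{k_j}}^{(r)}$ after sending $m\to\infty$. Combining the two yields the asserted equality.

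The only genuinely delicate point is the diagonalisation: as the Remark following Theorem~\ref{t.stopconv} stresses, $\liminf$ and $\limsup$ are unstable under passage to subsequences, so it is essential that the two-sided control be arranged to hold for \emph{all} integer radii along a single subsequence, rather than being extracted separately for each $r$. Everything else is monotone bookkeeping together with the countable combination of almost-sure statements.
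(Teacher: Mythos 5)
Your proof is correct and follows essentially the same route as the paper: the first claim by letting $r\to\infty$ in the left-hand inequality of Theorem~\ref{t.stopconv}(1), and the second by extracting a single subsequence along which the two-sided bounds of the second assertion hold for all integer radii simultaneously. The only difference is that you spell out the diagonalisation over $r$ (and note correctly that the bounds $\limsup\le\rho_\infty^{(m)}\le\liminf$ are stable under passage to further subsequences), a step the paper's proof leaves implicit.
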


\begin{proof}
The first assertion follows from 
the first assertion in Theorem \ref{t.stopconv}(1) upon letting $r\to\infty$.
To obtain the second assertion,
given a subsequence $n_k$ let $n_{k_j}$ be a subsequence for which the second assertion in Theorem \ref{t.stopconv}(1) holds.
Then  $\sigma_\infty \le
\lim_{r\to\infty}\liminf_{j\to\infty}\rho_{n_{k_j}}^{(r)}$ almost surely. The reverse
inequality follows from the first part of Theorem \ref{t.stopconv}(1) applied to this subsequence. 
\end{proof}

\begin{cor}\label{c.hoelderconv}
Under the above assumptions, 
suppose that $\sigma_n = T$ almost surely for all $n \in \CN$, and suppose furthermore
that for some $p\geq 1$
we have $$\sup_{n\in\CN} \|\bX_n \|_{L^p(\Omega; C([0,T]; E ))} < \infty.$$
Then:
\begin{enumerate}
 \item Almost surely, $\sigma_\infty = T$;
\item We have $X_\infty\in L^p(\Omega;C([0,T];E))$;
 \item If $p>1$, then, for all $1\leq q<p$,  
$$\bX_n \to \bX_\infty \ \hbox{ in } \ L^q(\Omega; C([0,T]; E)).$$ 
\end{enumerate}
\end{cor}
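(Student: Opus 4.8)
The plan is to extract from the hypothesis a uniform $L^p$-bound on $\bX_\infty$ by means of Theorem \ref{t.stopconv}(2), to deduce non-explosion (assertion (1)) by monotone convergence, and then to read off (2) and (3) from Corollary \ref{c.globaldef} together with a standard uniform-integrability argument. Throughout set $M := \sup_{n\in\CN}\|\bX_n\|_{L^p(\Omega;C([0,T];E))}$, which is finite by assumption.

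First I would fix $r>0$ and $\e>0$ and apply Theorem \ref{t.stopconv}(2). Since the convergence there takes place in $L^0(\Omega;B_{\rm b}([0,T];E))$, I may pass to a subsequence $(n_k)$ (depending on $r$) along which $\bX_{n_k}\one_{[0,\rho_\infty^{(r)}\wedge \rho_{n_k}^{(r+\e)}]}\to \bX_\infty\one_{[0,\rho_\infty^{(r)}]}$ almost surely in the supremum norm. The reverse triangle inequality then gives, almost surely,
\[
\sup_{t\le \rho_\infty^{(r)}}\|X_\infty(t)\|
=\lim_{k\to\infty}\ \sup_{t\le \rho_\infty^{(r)}\wedge \rho_{n_k}^{(r+\e)}}\|X_{n_k}(t)\|
\le \liminf_{k\to\infty}\ \|\bX_{n_k}\|_{C([0,T];E)},
\]
where I use that $\sigma_{n_k}=T$ almost surely, so that each $\bX_{n_k}$ genuinely lies in $C([0,T];E)$ and the supremum over the stopped interval is dominated by the full supremum norm. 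Applying Fatou's lemma and the hypothesis yields the key estimate $\E\,\sup_{t\le \rho_\infty^{(r)}}\|X_\infty(t)\|^p\le M^p$, and crucially this bound is uniform in $r$ because $M$ does not depend on $r$.

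To prove (1) I would let $r\to\infty$. One first notes that $\rho_\infty^{(r)}\uparrow \sigma_\infty$ almost surely, for otherwise $X_\infty$ would be unbounded on a compact subinterval of $[0,\sigma_\infty)$, contradicting path continuity. Hence $\sup_{t\le\rho_\infty^{(r)}}\|X_\infty(t)\|\uparrow \sup_{t\in[0,\sigma_\infty)}\|X_\infty(t)\|$, and monotone convergence upgrades the previous estimate to $\E\,\sup_{t\in[0,\sigma_\infty)}\|X_\infty(t)\|^p\le M^p$. In particular $\sup_{t\in[0,\sigma_\infty)}\|X_\infty(t)\|<\infty$ almost surely; but on the event $\{\sigma_\infty<T\}$ this supremum equals $\limsup_{t\uparrow\sigma_\infty}\|X_\infty(t)\|=\infty$ by the defining property of the explosion time, so $\P(\sigma_\infty<T)=0$, which is (1). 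Now $\sigma_n=T$ almost surely for every $n\in\OCN$, so Corollary \ref{c.globaldef} applies and gives $\bX_n\to\bX_\infty$ in $L^0(\Omega;C([0,T];E))$; in particular $\bX_\infty\in C([0,T];E)$ almost surely, whence by continuity $\|\bX_\infty\|_{C([0,T];E)}=\sup_{t\in[0,\sigma_\infty)}\|X_\infty(t)\|$ and the bound above reads $\E\,\|\bX_\infty\|_{C([0,T];E)}^p\le M^p$. This is (2).

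For (3), fix $1\le q<p$. By (2) and Minkowski's inequality the sequence $\|\bX_n-\bX_\infty\|_{C([0,T];E)}^q$ is bounded in $L^{p/q}(\Omega)$, and since $p/q>1$ it is therefore uniformly integrable. Corollary \ref{c.globaldef} furnishes convergence in probability $\|\bX_n-\bX_\infty\|_{C([0,T];E)}\to 0$, so Vitali's convergence theorem gives $\E\,\|\bX_n-\bX_\infty\|_{C([0,T];E)}^q\to 0$, i.e.\ convergence in $L^q(\Omega;C([0,T];E))$. I expect the main obstacle to be the second paragraph: transferring the uniform $L^p$-control from the $\bX_n$ to the limit $\bX_\infty$ up to the level-$r$ stopping times. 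This is the only step that genuinely exploits the sharp supremum-norm convergence of Theorem \ref{t.stopconv}(2) (rather than the weaker $L^0(\Omega\times[0,T];E)$ statement of (3)), and it must be arranged uniformly in $r$ so that the passage $r\to\infty$ closes the argument; once this bound is in hand, the remaining steps are routine.
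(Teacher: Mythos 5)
Your proposal is correct and follows essentially the same route as the paper: Theorem \ref{t.stopconv}(2) plus Fatou's lemma yields the $r$-uniform $L^p$-bound on the stopped limit process, letting $r\to\infty$ (you use monotone convergence where the paper applies Fatou a second time) gives a.s.\ boundedness on $[0,\sigma_\infty)$ and hence $\sigma_\infty=T$ by the explosion property, and (2) and (3) then follow from path continuity, Corollary \ref{c.globaldef}, and uniform integrability exactly as in the paper. The only cosmetic difference is that you pass to an a.s.-convergent subsequence before invoking Fatou, which is harmless since the bound $M^p$ is independent of $r$ and of the subsequence.
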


\begin{proof}
(1) From Theorem \ref{t.stopconv}(2) and Fatou's lemma we infer, for $r>0$ and $\eps >0$,
\[ 
\begin{aligned}
\expect\|\bX_\infty\one_{[0,\rho_\infty^{(r)})}\|_{B_{\rm b}([0,T]; E)}^p & \leq \liminf_{n\to \infty}
 \expect\| \bX_n\one_{[0,\rho_\infty^{(r)}\wedge\rho_n^{(r+\eps)})}\|_{B_{\rm b}([0,T];E)}^p\\
&\leq \liminf_{n\to\infty}\expect\|\bX_n\|_{C([0,T]; E)}^p \leq C  ,
\end{aligned}
\]
where $C:= \sup_{n\in\CN} \|\bX_n \|_{L^p(\Omega; C([0,T]; E))}$. 
Employing Fatou's lemma a second time, we see that 
\begin{align*}
\E \n X_\infty \one_{[0,\sigma_\infty)}\n_{B_{\rm b}([0,T];E)}^p
& =  \expect \lim_{r \to \infty} \|\bX_\infty\one_{[0,\rho_\infty^{(r)})}\|_{B_{\rm b}([0,T]; E)}^p 
\\ & \leq \liminf_{r\to\infty}\expect\|\bX_\infty\one_{[0,\rho_\infty^{(r)})}\|_{B_{\rm b}([0,T]; E)}^p \leq C\, .
\end{align*}
In particular, we infer that $\bX_\infty$ is almost surely bounded on $[0,\sigma_\infty )$. Since 
$\sigma_\infty$ is an explosion time, this is only 
possible if $\sigma_\infty = T$.\medskip  

(2) From what we have proved so far it follows that $X_\infty\in L^p(\Omega;C_{\rm b}([0,T);E))$ and 
thus $\sup_{t\in[0,T)}\|X_\infty (t,\omega )\|< \infty$ for almost all $\omega \in \Omega$.
This implies that $\bigcup_{k\in\CN}\big\{\rho_\infty^{(k)}=T\big\}$ has full measure and hence, by (a), 
$X(\omega ) \in C([0,T]; E)$ almost surely. By continuity, $\sup_{t\in [0,T)}\|X_\infty (t,\omega )\|
= \sup_{t\in [0,T]}\|X_\infty (t,\omega )\|$ almost surely and now $X_ \infty 
\in L^p(\Omega; C([0,T]; E))$ follows.
\medskip 
 
(3) Fix $1\leq q<p$ and write, for $r>0$ and $\e>0$, 
\begin{equation}\label{eq.convest}
\begin{aligned}   \expect \|\bX_n - \bX_\infty\|_{C_{\rm b}([0,T]; E)}^q 
& \lesssim 
\expect\big\|\bX_n \big(\one_{[0,T)}-\one_{[0,\rho_\infty^{(r)}\wedge\rho_n^{(r+\eps)})}\big)
\big\|_{B_{\rm b}([0,T]; E)}^q\\ 
& \quad+ \expect\big\|\bX_n\one_{[0,\rho_\infty^{(r)}\wedge\rho_n^{(r+\eps)})} - 
\bX_\infty\one_{[0,\rho_\infty^{(r)})}\big\|_{B_{\rm b}([0,T];E)}^q\\
&\quad + \expect \big\|\bX_\infty \big(\one_{[0,\rho_\infty^{(r)})}-\one_{[0,T)}\big)\big\|_{B_{\rm b}([0,T];E)}^q \, .
\end{aligned}
\end{equation}
From
\[ \big\{ \rho_\infty^{(r)}\wedge \rho_n^{(r+\eps)} < T\big\} \subseteq
 \big\{\|\bX_\infty\|_{B_{\rm b}([0,T]; E)} \geq r\big\} \cup 
 \big\{\|\bX_n\|_{B_{\rm b}([0,T]; E)} \geq r +\eps\big\}\, ,
\]
it follows that
\begin{equation}\label{eq:twoterms}
\begin{aligned}\ & \big\|\bX_n\big(\one_{[0,T)}-\one_{[0,\rho_\infty^{(r)}\wedge
\rho_n^{(r+\eps))})}\big)\big\|_{B_{\rm b}([0,T];E)} 
\\ & \ \ \leq
\one_{\{\|\bX_\infty\|_{B_{\rm b}([0,T];E)} \geq r\}}\|\bX_n\|_{B_{\rm b}([0,T]; E)}\vee 
\one_{\{\|\bX_n\|_{B_{\rm b}([0,T];E)} \geq r +\eps\}}\|\bX_n\|_{B_{\rm b}([0,T]; E)}.                  
\end{aligned}
\end{equation}
Since the $\bX_n$ form a bounded sequence in $L^p(\Omega; C([0,T]; E))$, 
the random variables $\|\bX_n\|_\infty^q$ are uniformly integrable.
Using this to handle the second term on the right-hand side of \eqref{eq:twoterms}
and H\"older's inequality (with exponents $p$ and $p'$, $\frac1p+\frac1{p'}=1$) 
to handle the first,
\eqref{eq:twoterms}
implies that the first term on the right-hand side of \eqref{eq.convest} 
converges to $0$ as $r\to\infty$. 

For fixed $r>0$ and $\eps >0$, convergence of the middle term on the right-hand side of \eqref{eq.convest} 
follows from Theorem \ref{t.stopconv} (2) and the uniform integrability of the $\|\bX_n\|_\infty^q$. 

As for the third term, arguing as before we have
$$\|\bX_\infty \big(\one_{[0,T)}-\one_{[0,\rho_\infty^{(r)})}\big)\big\|_{B_{\rm b}([0,T];E)}
\le \one_{\{\|\bX_\infty\|_{B_{\rm b}([0,T];E)} \geq r\}}\|\bX_\infty\|_{B_{\rm b}([0,T]; E)}
$$
and therefore the third term on the right-hand side of \eqref{eq.convest} tends to $0$ as $r\to \infty$.

It thus follows that $X_n\to X_\infty$ in $L^q(\Omega;C_{\rm b}([0,T];E))$. 
\end{proof}

\section{Application to semilinear stochastic equations}\label{sect.2}

We shall now apply the abstract results of the previous section to prove 
convergence of approximate solutions of stochastic evolution equations
of the form
\begin{equation}\label{eq.scp}\tag{SCP}
\left\{ \begin{aligned}
         dX(t) & =  [ AX(t) + F(t,X(t))]\,dt + G(t,X(t))\,dW(t)\\
	  X(0) & =  \xi.
        \end{aligned}
 \right.
\end{equation}
The driving noise process $W$ is assumed to be a cylindrical 
Brownian motion in some Hilbert space $H$.

\subsection{Stochastic evolution equations in UMD spaces}\label{sec:UMD}

Under the assumptions stated below, existence and uniqueness of maximal 
solutions for \eqref{eq.scp} in UMD spaces $E$
 was proved in \cite{vNVW08}, and convergence of the solutions in the case of globally Lipschitz 
continuous coefficients was established in \cite{KvN10}.

Continuing the notations of the previous section we shall write $A=A_\infty$, $F=F_\infty$, 
$G=G_\infty$ and $\xi = \xi_\infty$ when we thinks of these objects as the limits of sequences
of approximating objects $A_n$, $F_n$, $G_n$, $\xi_n$. 

\begin{itemize}
 \item[(A1)] For $n \in \OCN$, the operators $A_n$ are densely defined, closed and \emph{uniformly sectorial} 
on $E$ in the
 sense that there exist numbers $M\geq 1$ and $w \in \CR$ such that each $A_n$ is sectorial of type 
$(M,w)$.
 \item[(A2)] The operators $A_n$ converge to $A_\infty$ \emph{in the strong resolvent sense}:
\[ \lim_{n\to\infty} R(\lambda, A_n)x = R(\lambda, A_\infty)x \]
for some (equivalently, all) $\Re\lambda > w$ and all $x \in E$. 
\end{itemize}
Assumptions (A1) and (A2) coincide with those made in 
\cite{KvN10}. 
Assuming (A1), the strongly continuous analytic semigroups $(e^{-wt}S(t))_{t\ge 0}$
are uniformly bounded, uniformly in $n$.
Therefore, for $w'>w$ the fractional powers $(w^\prime -A_n )^\a$ 
are well defined for all $\a \in (0,1)$.
In particular, the fractional domain spaces 
$$E_{n,\a} := \Dom((w^\prime-A_n)^\a)$$ are Banach spaces with respect to the norm 
$$\|x\|_{E_{n,\a}} := \|(w^\prime -A_n )^\a x\|.$$ Up to equivalent 
norms, these spaces are independent of the choice of $w^\prime$.
It may happen, however, that these spaces vary with $n$. This may cause problems,
and to avoid these we make the  following assumption.

\begin{itemize}
 \item[(A3)] For all $0<\a<\half$ we have $E_{n,\a} = E_{\infty, \a}$ as 
linear subspace of $E$. 
Moreover, there exist constants $c_\a>0$ and $C_\a>0$ such that
\[ c_\a\|x\|_{E_{\infty,\a}} \leq \|x\|_{E_{n,\a}} \leq C_\a\|x\|_{E_{\infty,\a}}\quad \forall \, x \in E_\a\, , \
n \in \CN\, . \]
\end{itemize}
We then set $E_\a := E_{\infty, \a}$ and $\|\cdot\|_\a 
:= \|\cdot\|_{E_{\infty, \a}}$.
We complete the scale $E_\a$ by setting $E_0 := E$ and $\|\cdot\|_0 := \|\cdot\|$.

It is immediate from assumption (A3) that 
for each $0<\a<\half$, 
the operators $(w^\prime - A_n)^\a$
are uniformly bounded in $\cL (E_\a, E)$ 
and that the operators  $(w^\prime - A_n)^{-\a}$ are uniformly bounded 
in $ \cL (E, E_\a )$.

For $0<\a< \half$ we define the extrapolation spaces $E_{n, -\a}$ 
as the completion of $E$ under the norms 
$\|x\|_{E_{n,-\a}} := \|(w^\prime - A_n)^{-\a}x\|_E$.
For fixed $n$, these spaces are independent of $w'>w$ up to an equivalent norm,
and for each fixed $w'>w$ these spaces are independent of $n$ 
with equivalence constants independent of $n$. 
Accordingly, we set $E_{-\a} := E_{\infty, - \a}$ and $\|\cdot\|_{-\a} 
:= \|\cdot\|_{E_{\infty, -\a}}$.
Then for all $0 \leq \a,\b < \half$, 
the operators
$(w^\prime - A_n)^{\a+\b}$ and $(w^\prime - A_n)^{-(\a+ \b )}$ are uniformly bounded 
 in $\cL (E_\a, E_{-\b})$ and $\cL (E_{-\b}, E_\a )$, respectively.
\medskip 

Concerning the coefficients $F_n$ and $G_n$, we shall assume that the hypotheses
of \cite[Section 8]{vNVW08} are satisfied, uniformly with respect to $n$, and with 
exponents 
$$0\le \theta<\tfrac12, \quad 0\le \kappa_F,\kappa_G< \tfrac12,$$
and we add the assumptions concerning their convergence of \cite{KvN10}. 
The precise assumptions are as follows. Recall that $\g(H,F)$ denotes the operator ideal
of all $\g$-radonifying operators from $H$ into the Banach space $F$ (see \cite{vNVW08}
for further explanations of the hypotheses involving these spaces).

\begin{itemize}
 \item[(F1)] The maps $F_n : [0,T]\times \Omega \times E_\theta \to E_{-\kappa_F}$ 
are 
uniformly locally Lipschitz continuous, i.e.,  for all $r>0$ there exists a 
constant $L_F^{(r)}\geq 0$ such that 
\[ \|F_n(t,\omega, x) - F_n(t,\omega, y)\|_{-\kappa_F} \leq L_F^{(r)}\|x-y\|_\theta \]
for all $t\in [0,T]$, $\omega \in \Omega$ and $x,y \in E_\theta$ of norm $\|x\|_\theta, \|y\|_\theta\leq r$.
Moreover,
for all $x \in E_\theta$ the map $(t,\omega )\mapsto F_n(t,\omega, x)$ 
is strongly measurable and adapted and there exists a constant $C_{F,0}$ such that  
$$ \| F(t,\omega,0)\|_{E_{-\kappa_F}} \le C_{F,0}.$$

\item[(F2)] For all $r>0$ and almost all $(t,\omega ) \in [0,T]\times \Omega$ 
we have 
$$F_n^{(r)}(t, \omega, x) \to F_\infty^{(r)} (t, \omega, x) \ \hbox{ in } \, 
E_{-\kappa_F}$$ for  all $x \in E_\theta$.

 \item[(G1)] The maps $G_n : [0,T]\times\Omega \times E_{\theta} \to \gamma (H, E_{-\kappa_G})$ 
are uniformly locally $\gamma$-Lipschitz continuous, i.e., for all $r>0$ there exist maps $G_n^{(r)} :
[0,T]\times \Omega\times E_\theta \to \gamma (H, E_{-\kappa_G})$ such that
\[ G_n^{(r)} = G_n \quad\mbox{on} \quad [0,T]\times\Omega\times\{x \in E_\theta\, : \, \|x\|_\theta \leq r\}. \] 
Moreover, there there exist constants $L_G^{(r)}$ 
such that for all Borel probability measures $\mu$ on $[0,T]$, all $\omega \in \Omega$, 
all  $\phi_1, \phi_2 \in L^2([0,T],\mu; E_\theta)\cap \gamma (L^2([0,T],\mu), E_\theta)
=: L^2_\gamma ([0,T], \mu; E_\theta )$, and all $n \in \OCN$ we have
\begin{align*}
\ & \qquad \|G_n^{(r)} (\cdot , \omega, \phi_1 ) - G_n^{(r)} (\cdot , \omega, \phi_2)\|_{\gamma (L^2([0,T], \mu; H), E_{-\kappa_G})}
\\ & \hskip6cm \leq  L_G^{(r)}\|\phi_1 -\phi_2 \|_{L^2_\gamma ([0,T], \mu; E_\theta)}.
\end{align*}
For all $x \in E_\theta$, $h \in H$, and $n \in \OCN$ 
there exists a constant $C_{G,0}$ such that  for all Borel probability measures $\mu$ on $[0,T]$,
\begin{align*}
 \|G_n^{(r)} (\cdot, \omega, 0)\|_{\gamma (L^2([0,T], \mu; H), E_{-\kappa_G})}
 \leq & C_{G,0}.
\end{align*}
Finally, we assume that for all $n \in \OCN\,,\, x \in E_\theta$ and $h \in H$ the map 
$(t,\omega ) \mapsto G_n(t,\omega, x)h$ is strongly measurable and adapted. We also assume this 
measurability and adaptedness of the maps $G_n^{(r)}$.

\item[(G2)] For all $r>0$ and almost all $(t,\omega ) \in [0,T]\times \Omega$ we have 
$$G_n^{(r)}(\cdot, \omega, x) \to G_\infty^{(r)} (\cdot, \omega, x)
\ \hbox{ in }\, \gamma (L^2(0,T,\mu; H), E_{-\kappa_G})$$ for 
all $x \in E_\theta$ and all Borel probability measures $\mu$ on $[0,T]$.
\end{itemize}
Examples where these assumptions are satisfied have been presented in \cite{KvN10,vNVW08}.
When $E$ also has type $2$, then the conditions (G1) and (G2) are implied by the `classical'
notions of Lipschitz continuity and convergence assumptions, respectively, 
with respect to the norm of $\g(H,E_{-\kappa_G})$; see \cite[Lemma 5.2]{vNVW08} (cf. the statement
of Proposition \ref{p.banachlip}).

For UMD spaces $E$, under the above assumptions the existence of a 
unique maximal 
solution $(X_n(t))_{t \in [0,\sigma_n)}$ of \eqref{eq.scp} 
with coefficients $A_n$, $F_n$, $G_n$ was proved in \cite[Theorem 8.1]{vNVW08} 
for initial data $\xi_n\in L^p(\Omega,\F_0,\P;E_\theta)$ with $2<p<\infty$. Moreover, it was shown that 
$\sigma_n$  is an explosion time for $X_n$.
 In this context we shall write
$$ X_n = \sol (A_n, F_n, G_n, \xi_n).$$

In the special case when the coefficients $F_n$ and $G_n$ are of linear growth and 
satisfy global Lipschitz 
assumptions (so that $\sigma_n \equiv T$), the convergence results proved in
 \cite[Theorems 4.3, 4.7]{KvN10}  for
the case $\theta = \kappa_F = \kappa_G = \delta = 0$ 
can be extended {\em mutatis mutandis} to yield the following result.

\begin{prop}\label{prop:global}
Let $E$ be a UMD space, assume (A1), (A2), (A3), 
suppose the mappings
$F_n : [0,T]\times\Omega\times E_{\theta} \to E_{-\kappa_F}$ and
$G_n : [0,T]\times\Omega\times E_{\theta} \to \gamma (H, E_{-\kappa_G})$
satisfy the global Lipschitz counterparts 
of (F1), (G1) with linear growth assumptions, and assume that they satisfy (F2), (G2). 
Let $2<p<\infty$, $0\le \theta<\frac12$, $0\le \kappa_F, \kappa_G<\frac12$ 
satisfy 
\begin{align}\label{eq:exps}
\theta + \kappa_F < \tfrac{3}{2} - \tfrac{1}{\tau}, \quad 
\theta + \kappa_G < 1 - \tfrac{1}{p} - \tfrac{1}{\tau},
\end{align} 
where 
$\tau\in (1,2]$ denotes the type of $E$. 
If $\xi_n \to \xi_\infty$ in $L^p(\Omega, \cF_0, \P;E_\theta)$,
then the global solutions $(X_n)_{t\in [0,T]}$ of  \eqref{eq.scp} satisfy
\begin{align*}
\bX_n \to \bX_\infty \ \hbox{ in } \, L^q(\Omega; C([0,T]; E_\theta )).
\end{align*}
If $\lambda, \delta \geq 0$ satisfy 
\begin{align}\label{eq:exps2}
\lambda + \delta < \tfrac12 -\tfrac1p -\kappa_G,
\end{align}
then for all  $1\leq q < p$,
\begin{align*}
 \bX_n - S_n(\cdot )\xi_n \to \bX_\infty - S_\infty(\cdot )\xi_\infty \ \hbox{ in } \, 
L^q( \Omega; C^\lambda ([0,T]; E_\delta )).
\end{align*} 
\end{prop}

The approximation of $A$ by the operators $A_n$ 
with respect to the fractional domain norms is handled by using Lemma \ref{l.fracconv};
for the rest the proofs of \cite[Theorems 4.3, 4.7]{KvN10} carry over almost word for word. 

\begin{rem}\label{rem:A3}
In situations where one has $\theta = \kappa_F = \kappa_G = \delta = 0$
with $F$ and $G$ not necessarily globally Lipschitz continuous, Assumption (A3) is not needed 
in Proposition \ref{prop:global} and also not in the following results.
\end{rem}

Combining this result with Theorem \ref{t.stopconv}, we obtain the following 
extension of Proposition \ref{prop:global}
to the locally Lipschitz case.

\begin{thm}\label{t.main} Let $E$ be a UMD space, assume (A1), (A2), (A3), (F1), (F2), (G1), (G2), 
and let \eqref{eq:exps} hold. Suppose that 
$\xi_n\to \xi_\infty$ in $L^p(\Omega,\F_0,\P;E_\theta)$.
Let $(X_n(t))_{t\in [0,\sigma_n)} = \sol (A_n, F_n, G_n, \xi_n)$
and define $$\rho_n^{(r)} := \inf\big\{ t \in (0, \sigma_n)\, : \|X_n(t)\|_\theta > r\big\}.$$ 
Then, 
\begin{enumerate}
\item For all $r>0$ and $\eps >0$  we have, almost surely, 
\[ \liminf_{n\to\infty} \rho_n^{(r)} \leq \rho_\infty^{(r)} \leq 
\limsup_{n\to\infty} \rho_n^{(r+\eps )}\, ;\]
 \item For all $r>0$ and $\eps >0$ we have
\[ \bX_n\one_{[0, \rho_\infty^{(r)}\wedge \rho_n^{(r+\eps)})} \to \bX_\infty\one_{[0,\rho_\infty^{(r)})}\quad
 \mbox{in}\,\, L^0(\Omega; B_{\rm b}([0,T]; E_\theta))\, ;
\]
 \item We have $$\bX_n\one_{[0,\sigma_\infty \wedge \sigma_n)} \to \bX_\infty\one_{[0,\sigma_\infty )}\quad 
 \mbox{in}\,\, L^0(\Omega\times [0,T]; E_\theta).$$
\end{enumerate}
\end{thm}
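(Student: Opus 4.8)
The plan is to obtain all three assertions directly from Theorem \ref{t.stopconv}, applied with the Banach space there taken to be $E_\theta$ and with $\rho_n^{(r)}$ as in the present statement. For this I must exhibit, for each $r>0$, globally defined approximating processes $\bX_n^{(r)}$ satisfying conditions (a) and (b) preceding that theorem. Using the globally Lipschitz truncations $F_n^{(r)}$ and $G_n^{(r)}$ furnished by (F1) and (G1), which coincide with $F_n$ and $G_n$ on $\{\|x\|_\theta \le r\}$, are globally Lipschitz of linear growth with constants $L_F^{(r)}, L_G^{(r)}$ independent of $n$, and obey the convergence properties (F2) and (G2), I set
\[
\bX_n^{(r)} := \sol(A_n, F_n^{(r)}, G_n^{(r)}, \xi_n).
\]
By \cite[Theorem 8.1]{vNVW08} each $\bX_n^{(r)}$ is a globally defined, continuous, adapted process on $[0,T]$ with paths in $E_\theta$, since its coefficients are globally Lipschitz of linear growth.

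First I would check condition (a). The maximal solution $\bX_n = \sol(A_n, F_n, G_n, \xi_n)$ is built, as recalled in the introduction, by patching the truncated solutions, and local uniqueness forces $\bX_n$ to agree with $\bX_n^{(r)}$ for as long as $\bX_n$ remains in the ball of radius $r$ of $E_\theta$; that is, $\bX_n^{(r)}\one_{[0,\rho_n^{(r)}]} = \bX_n\one_{[0,\rho_n^{(r)}]}$ almost surely. This is exactly the content of the construction of solutions for locally Lipschitz coefficients carried out in \cite[Section 7]{vNVW08}, and it is precisely the situation for which condition (a) was designed.

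Next I would verify condition (b), namely $\bX_n^{(r)} \to \bX_\infty^{(r)}$ in $L^0(\Omega; C([0,T]; E_\theta))$ for each fixed $r>0$. This is the assertion of Corollary \ref{c.global} applied to the globally Lipschitz coefficients $F_n^{(r)}, G_n^{(r)}$ together with the hypothesis $\xi_n \to \xi_\infty$ in $L^0(\Omega, \F_0, \P; E_\theta)$. Its hypotheses are met: (A1)--(A3) are assumed; the truncations are globally Lipschitz of linear growth uniformly in $n$ and satisfy (F2), (G2) by construction; and the exponent conditions required by Corollary \ref{c.global} follow from \eqref{eq:exps}, since $\theta \ge 0$ gives $\kappa_F \le \theta + \kappa_F < \tfrac32 - \tfrac1\tau$ while $\tfrac1p > 0$ gives $\theta + \kappa_G < 1 - \tfrac1p - \tfrac1\tau < 1 - \tfrac1\tau$.

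With (a) and (b) established, Theorem \ref{t.stopconv} applies with $E = E_\theta$ and yields assertions (1) and (3) verbatim. For assertion (2), Theorem \ref{t.stopconv}(2) provides convergence over the closed interval $[0, \rho_\infty^{(r)}\wedge \rho_n^{(r+\eps)}]$; the half-open version stated here follows at once, because the proof of Theorem \ref{t.stopconv}(2) shows that for $n$ large one has $\rho_n^{(r+\eps)} \ge \rho_\infty^{(r)}$ and $\bX_n \to \bX_\infty$ uniformly on the closed interval $[0, \rho_\infty^{(r)}]$, so deleting the single endpoint leaves the limit in $B_{\rm b}([0,T]; E_\theta)$ unchanged. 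The step demanding the most care is (b): one must confirm that the truncated coefficients inherit global Lipschitz continuity, linear growth and the convergence properties \emph{uniformly in} $n$, and that \eqref{eq:exps} really does imply the weaker exponent conditions of Corollary \ref{c.global}. The genuine analytic work has already been done in Theorem \ref{t.stopconv} and Corollary \ref{c.global}, so that the present statement is in essence a matter of correctly matching hypotheses.
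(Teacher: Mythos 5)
Your proposal is correct and follows essentially the same route as the paper: truncate the coefficients outside the ball of radius $r$ in $E_\theta$, set $\bX_n^{(r)} := \sol(A_n,F_n^{(r)},G_n^{(r)},\xi_n)$, verify hypothesis (a) from the maximality/local uniqueness of $\bX_n$ and hypothesis (b) from the convergence result for globally Lipschitz coefficients, and then invoke Theorem \ref{t.stopconv}. The only (harmless) divergence is that you cite Corollary \ref{c.global} for condition (b) while the paper cites Proposition \ref{prop:global}; since the initial data are only assumed to converge in $L^0$, your choice is in fact the more accurate one, and your explicit check that \eqref{eq:exps} implies the exponent conditions of Corollary \ref{c.global} is a detail the paper leaves implicit.
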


\begin{proof}
For $r>0$, define  
\[ F_n^{(r)}(t,\omega,x) := \left\{ \begin{array}{ll} 
                         F_n(t,\omega, x)& \hbox{if } \|x\|_\theta \leq r\\
			 F_n\big(t,\omega,\frac{rx}{\|x\|_\theta}\big) & \hbox{otherwise}. \,
                        \end{array}
\right.
\]
For each $r>0$, the maps $F_n^{(r)}$ and $G_n^{(r)}$ 
are uniformly 
($\gamma$-)Lipschitz continuous and of linear growth. In particular, the processes $\bX_n^{(r)} := 
\sol (A_n, F_n^{(r)}, G_n^{(r)}, \xi_n)$ exist globally. Then the processes $\bX_n$ together with 
the processes $\bX_n^{(r)}$ satisfy the hypotheses of Theorem \ref{t.stopconv}.
 Indeed, (a) follows from 
the maximality of $\bX_n$, cf.\ \cite[Lemma 8.2]{vNVW08},
and (b) follows from the convergence $X_n\to X_\infty$ in $L^q(\Omega;C([0,T];E_\theta))$
of Proposition \ref{prop:global}.
\end{proof}

Concerning the initial data, it actually suffices to assume that
$\xi_n$ converges to  $\xi$ in $L^0(\Omega;\F_0,\P;E_\theta)$; see
Subsection \ref{sec:meas}.
 
\begin{cor}\label{c.main}
If, in addition to the assumptions of the previous theorem, we have  
$\sigma_n = T$ almost surely for all $n \in \CN$ and  
$\sup_{n\in\CN}\E \|\bX_n\|_{C([0,T]; E_\theta )}^p< \infty$,
then:
\begin{enumerate}
 \item $\sigma_\infty = T$ almost surely;
 \item For all $1\le q<p$, $$\bX_n \to \bX_\infty \ \hbox{ in }\, L^q(\Omega; C([0,T]; E_\theta ));$$
 \item For $0 \leq \delta < \half - \frac1p - \kappa_G$ we have 
$$\bX_n - S_n(\cdot )\xi_n \to \bX_\infty -S_\infty (\cdot )\xi_\infty \ \hbox{ in } \, 
 L^0((0,T)\times \Omega; E_\delta);$$
 \item 
If, in addition, \eqref{eq:exps2} holds and
$\sup_n\expect\|\bX_n - S_n(\cdot )\xi_n\|^p_{C^\lambda ([0,T]; E_\delta )} < \infty$, 
then 
\[ \bX_n - S_n(\cdot )\xi_n \to \bX_\infty -S_\infty (\cdot )\xi_\infty \ \hbox{ in }\,  
 L^q(\Omega; C^\mu ([0,T], E_\delta) )
\]
for all $1\leq q< p$ and $0\leq \mu < \lambda$.
\end{enumerate}
\end{cor}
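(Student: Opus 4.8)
The plan is to deduce all four assertions from the abstract results of Section \ref{sect.1} together with the higher-regularity statement of Proposition \ref{prop:global}, applied to the truncated globally Lipschitz coefficients $F_n^{(r)}, G_n^{(r)}$ from the proof of Theorem \ref{t.main}. Throughout I work in the base space $E_\theta$ and write $\bX_n^{(r)} = \sol(A_n,F_n^{(r)},G_n^{(r)},\xi_n)$; as shown in the proof of Theorem \ref{t.main}, the pair $(\bX_n)$, $(\bX_n^{(r)})$ satisfies hypotheses (a) and (b). Assertions (1) and (2) are then immediate: under the standing hypotheses $\sigma_n = T$ almost surely and $\sup_n \E\|\bX_n\|_{C([0,T];E_\theta)}^p < \infty$, Corollary \ref{c.hoelderconv} applied with $E$ replaced by $E_\theta$ yields at once that $\sigma_\infty = T$ almost surely, that $\bX_\infty \in L^p(\Omega;C([0,T];E_\theta))$, and that $\bX_n \to \bX_\infty$ in $L^q(\Omega;C([0,T];E_\theta))$ for every $1\le q<p$.

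For assertion (3) I fix $\delta$ in the stated range; taking $\lambda = 0$, condition \eqref{eq.ld} reads $\delta < \half - \tfrac1p - \kappa_G$, so for each fixed $r$ Proposition \ref{prop:global} applies to the globally Lipschitz data $F_n^{(r)}, G_n^{(r)}$ and gives
\[
 \bX_n^{(r)} - S_n(\cdot)\xi_n \to \bX_\infty^{(r)} - S_\infty(\cdot)\xi_\infty \quad \text{in } L^q(\Omega;C([0,T];E_\delta)).
\]
I will invoke the subsequence criterion for convergence in measure. Given any subsequence, I would extract, using (2) and a diagonal argument over $r\in\CN$, a further subsequence along which $\bX_n \to \bX_\infty$ almost surely in $C([0,T];E_\theta)$ and $\bX_n^{(r)} - S_n(\cdot)\xi_n \to \bX_\infty^{(r)} - S_\infty(\cdot)\xi_\infty$ almost surely in $C([0,T];E_\delta)$ for every $r\in\CN$. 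Fix $\omega$ outside the exceptional null set. Since $\bX_\infty(\cdot,\omega)$ is bounded in $E_\theta$, choose an integer $r$ with $\sup_t\|X_\infty(t,\omega)\|_\theta < r$; then $\rho_\infty^{(r)}(\omega)=T$, so $\bX_\infty(\omega) = \bX_\infty^{(r)}(\omega)$ on $[0,T]$ by (a), and by uniform $E_\theta$-convergence $\|X_n(t,\omega)\|_\theta < r$ for all $t$ once $n$ is large, whence $\rho_n^{(r)}(\omega)=T$ and $\bX_n(\omega) = \bX_n^{(r)}(\omega)$ on $[0,T]$. Subtracting the common term $S_n(\cdot)\xi_n(\omega)$, I obtain almost sure uniform convergence in $E_\delta$ along the subsequence; the subsequence criterion then delivers convergence in $L^0(\Omega;C([0,T];E_\delta))$, which is stronger than the asserted convergence in $L^0((0,T)\times\Omega;E_\delta)$.

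For assertion (4) I keep the convolution processes $\bY_n := \bX_n - S_n(\cdot)\xi_n$ and $\bY_\infty := \bX_\infty - S_\infty(\cdot)\xi_\infty$, which by (3) converge in $L^0(\Omega;C([0,T];E_\delta))$. From $\sup_n\E\|\bY_n\|_{C^\lambda([0,T];E_\delta)}^p<\infty$ and lower semicontinuity of the $C^\lambda$-norm along an almost surely uniformly convergent subsequence, Fatou's lemma gives $\bY_\infty \in L^p(\Omega;C^\lambda([0,T];E_\delta))$ and hence $\sup_n\E\|\bY_n - \bY_\infty\|_{C^\lambda}^p < \infty$. The upgrade from $C^0$ to $C^\mu$ with $\mu<\lambda$ will rest on the elementary interpolation inequality
\[
 \|f\|_{C^\mu([0,T];E_\delta)} \le C\,\|f\|_{C^\lambda([0,T];E_\delta)}^{\mu/\lambda}\,\|f\|_{C([0,T];E_\delta)}^{1-\mu/\lambda} + \|f\|_{C([0,T];E_\delta)}.
\]
Applying it to $f = \bY_n - \bY_\infty$ and integrating, the pure $C^0$ contribution tends to zero in $L^q$ by Vitali's theorem (the $C^0$-norms converge to zero in probability and are bounded in $L^p$), while the mixed term is handled by Hölder's inequality on $\Omega$: since $\mu<\lambda$ and $q<p$, the exponents split so that the $C^\lambda$-factor stays bounded in $L^p$ and the $C^0$-factor tends to zero in a suitable $L^s$. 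This yields $\bY_n\to\bY_\infty$ in $L^q(\Omega;C^\mu([0,T];E_\delta))$ for all $1\le q<p$ and $0\le\mu<\lambda$.

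The main obstacle is the passage in (3)–(4) from the truncated, globally Lipschitz problems, where Proposition \ref{prop:global} supplies convergence in the strong $E_\delta$ and $C^\lambda$ topologies, to the original locally Lipschitz problem, given that the stopping times $\rho_n^{(r)}$ control only the $E_\theta$-norm of $\bX_n$. The resolution is to combine the uniform $E_\theta$-convergence from (2) with the exhaustion $\{\rho_\infty^{(r)}=T\}\uparrow\Omega$ forced by $\sigma_\infty = T$: on these events, for large $n$ the truncation is inactive on all of $[0,T]$, so $\bY_n = \bY_n^{(r)}$ there and the strong convergence of the truncated convolutions transfers verbatim.
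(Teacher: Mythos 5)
Your parts (1) and (2) coincide with the paper's proof: both are direct applications of Corollary \ref{c.hoelderconv}. For parts (3) and (4) you take a genuinely different, and in fact more economical, route. The paper proves (3) by running the abstract machinery of Theorem \ref{t.stopconv}(3) a second time, now on the compensated processes $\bY_n = \bX_n - S_n(\cdot)\xi_n$ with new stopping times $\varrho_n^{(r)}$ defined through the $E_\delta$-norm of $\bY_n$; verifying hypothesis (a) for these requires first reducing to initial data uniformly bounded in $L^\infty(\Omega;E_\theta)$ (so that $\|S_n(t)\xi_n\|_\theta$ can be absorbed into the truncation radius) and then removing that restriction by a further truncation of the $\xi_n$. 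You instead exploit the uniform convergence $\bX_n\to\bX_\infty$ in $C([0,T];E_\theta)$ already secured in (2): pathwise, for almost every $\omega$ the truncation at a sufficiently large integer $r=r(\omega)$ is eventually inactive on all of $[0,T]$, so $\bY_n(\omega)=\bY_n^{(r)}(\omega)$ for large $n$, and the $E_\delta$-convergence of the truncated convolutions supplied by Proposition \ref{prop:global} (with $\lambda=0$ in \eqref{eq.ld}) transfers directly. This sidesteps the reduction to bounded data entirely and yields the stronger conclusion $\bY_n\to\bY_\infty$ in $L^0(\Omega;C([0,T];E_\delta))$, of which the stated convergence in $L^0((0,T)\times\Omega;E_\delta)$ is a consequence. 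For (4) the paper merely cites an external interpolation lemma from the globally Lipschitz predecessor paper; your self-contained argument via $\|f\|_{C^\mu}\lesssim\|f\|_{C^\lambda}^{\mu/\lambda}\|f\|_{C^0}^{1-\mu/\lambda}+\|f\|_{C^0}$, H\"older and Vitali is a correct substitute, and the exponent bookkeeping closes precisely because $q<p$ and $\mu<\lambda$. One detail worth stating explicitly: the lower semicontinuity of the $C^\lambda$-norm under uniform convergence, which you use to place $\bY_\infty$ in $L^p(\Omega;C^\lambda([0,T];E_\delta))$, must be applied along an almost surely uniformly convergent subsequence, which your strengthened version of (3) provides.
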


\begin{proof}
(1) and (2) follow from Corollary \ref{c.hoelderconv}.\smallskip 

(3) Before we start the proof we note that the result follows trivially (with convergence in a stronger sense) 
from (2) when
$\delta\le \theta$. The point of (3) is that we might have $\delta>\theta$, and this is 
what we shall assume in the rest of the proof.

The processes $\bY_n := \bX_n - S_n(\cdot )\xi_n$
belong to $L^0(\Omega; C_{\rm b}([0,T); E_\delta ))$
in view of $\sigma_n = T$ and \cite[Theorem 8.1]{vNVW08}.
 
We first additionally assume that the initial values $\xi_n$ are uniformly bounded in 
$L^\infty (\Omega, \cF_0, \P; E_\theta)$ and put 
\[ C := \sup_{t \in [0,T], n \in \CN} \|S_n(t)\xi_n\|_{L^\infty (\Omega; E_\theta)}
 \, .
\]
Put ${Z}_n^{(r)} := \sol (A_n, F_n^{(r+C)}, G_n^{(r+C)}, \xi_n)$, where $F_n^{(r)}$ is as in the proof
of Theorem \ref{t.main}, and $\bY_n^{(r)} := {Z}_n^{(r)} - S_n(\cdot )\xi_n$.
If we put $\varrho_n^{(r)} := \inf\{ t > 0\, : \, \|Y_n(t)\|_\delta > r\}$, then 
$\bY_n\one_{[0,\varrho_n^{(r)}]} = \bY_n^{(r)}\one_{[0,\varrho_n^{(r)}]}$. 
Indeed, if $t \leq \varrho_n^{(r)}$, then
$\|Y_n(t)\|_\delta \leq r$ and
\[ \|Z_n^{(r)}(t)\|_\theta \leq \|Y_n^{(r)}(t)\|_\theta + \|S_n(t)\xi_n\|_\theta 
 \leq \|Y_n^{(r)}(t)\|_\delta + C \leq r +C
\]
almost surely.
By the maximality of $X_n$, $X_n \one_{[0,\varrho_n^{(r)}]} = {Z}_n^{(r)} \one_{[0,\varrho_n^{(r)}]}$. 
Subtracting $S_n(\cdot )\xi_n$, 
it follows that $\bY_n\one_{[0,\varrho_n^{(r)}]} = \bY_n^{(r)}\one_{[0,\varrho_n^{(r)}]}$ as claimed.

This proves that Hypothesis (a) preceding the statement of Theorem \ref{t.stopconv} is satisfied. Hypothesis (b) 
follows from Proposition \ref{prop:global}.
Thus the assertion follows from Theorem \ref{t.stopconv}(3).\smallskip 

It remains to remove the additional boundedness assumption. To that end, fix $K \in \CN$. 
From any given subsequence of 
$\xi_n$ we can extract a further subsequence, relabeled with indices $n$, 
such that $\xi_n\to\xi_\infty$ almost surely and 
$\|\xi_n - \xi_\infty\|_{L^p(\Omega; E_\theta )} \leq 2^{-n}$. 
By the Chebyshev inequality, $\P(\|\xi_n-\xi_\infty\|_\theta>1) \leq 2^{-np}$.

Now define $\Omega_K^N:=\{ \|\xi_n\|_\theta \leq K+1\, \forall \, n\geq N\}$. Then
\[ \P (\complement \Omega_K^N) \leq \P (\|\xi_\infty\|_\theta > K ) + 2^{-Np}\, .\]
Setting $\xi_n^{(K)} := \xi_n\one_{\{\|\xi_n\|_\theta \leq K+1\}}$, it follows that $\xi_n^{(K)} \to \xi_\infty^{(K)}$ 
in $L^p(\Omega; E_\theta )$ and $\xi_n^{(K)}$ is bounded in $L^\infty (\Omega; E_\theta )$. By the above, 
the claim holds true for the processes $\bY_n^{(K)}$, 
which are defined as the processes $\bY_n$, but starting the uncompensated solution 
at the modified initial data $\xi_n^{(K)}$. 

By \cite[Lemma 8.2]{vNVW08}, almost surely on $\Omega_K^N$, 
we have  $\bY_n^{(K)} = \bY_n$. Thus along our subsequence, (2) hold with $\Omega$ replaced with $\Omega_K^N$ for all 
$K, N \in \CN$. Writing $\Omega$ as a countable union of such sets, it follows that (2) holds as stated.\medskip 

(4) is immediate from (2) and \cite[Lemma 4.2]{KvN10}.
\end{proof}

\begin{example} The condition $\sup_{n\in\CN}\E \|\bX_n\|_{C([0,T]; E_\theta )}^p< \infty$
is satisfied if, in addition to the assumptions in Theorem \ref{t.main}, 
$F_n$ and $G_n$ are uniformly of linear growth. 
For $\lambda, \delta \geq 0$ 
with $\lambda +\delta <\half -\frac1p-\kappa_G$, we also have
$\sup_n \E\|\bX_n-S_n(\cdot )\xi_n\|_{C^\lambda([0,T]; E_\delta)}^p < \infty$; see 
\cite[Theorem 8.1]{vNVW08}. Hence, in this situation, Corollary \ref{c.hoelderconv}(4) applies.
\end{example}

\subsection{Stochastic evolution equations on general Banach spaces}\label{sec:Banach}

Reaction diffusion type equations with nonlinearities of polynomial growth are usually
considered in spaces of continuous 
functions. This is essential in order to verify the assumptions posed on the nonlinearities. 
As far as we know, there is no satisfying theory of 
stochastic integration available in spaces of continuous functions. We get around this by 
assuming that the Banach space $B$ in which we seek the solutions is sandwiched
between $E_\theta$ and $E$. We then assume that $E$ is a UMD Banach space
as in the previous section and carry out all stochastic integrations in the interpolation
scale of $E$. 
In order to be able to handle initial values with values
in $B$ without losing regularity due to the various embeddings, however, we need to carry out all
fixed point arguments in the space $L^p(\Omega;C([0,T];B))$. 

In applications, typical choices are 
$B = C(\overline{\OO})$ and $E = L^p(\OO)$ for some large $p\ge 2$, with $\OO$ a domain in $\R^d$.
This motivates us to work in UMD spaces $E$ with type $2$ from the onset 
(these include the spaces $L^p(\OO)$ for
$2\le p<\infty$). Accordingly we shall assume:
\begin{itemize}
\item[(E)] $E$ is a UMD Banach space with type $2$.
\end{itemize}
In addition to (A1) -- (A3) we shall assume:
\begin{itemize}
\item[(A4)] 
The semigroups $\bS_n$ restrict to strongly continuous 
semigroups $\bS_n^B$ on $B$ which are uniformly 
exponentially 
bounded in the sense 
that, for certain constants $\tilde{M} \geq 1$ and $\tilde{w} \in \CR$ we have 
$\|S_n(t)\|_{\cL (B)} \leq \tilde{M}e^{\tilde{w} t}$ for all $t\geq 0$ and $n \in \OCN$.
\item[(A5)]
We have continuous, dense embeddings
$E_\theta \inject B \inject E.$ 
\end{itemize}

Strong resolvent convergence of the parts $A_n|_B$ of $A_n$ in $B$ 
follows from (A1) -- (A4); see Lemma \ref{l.Bconv}.

In the applications we have in mind, the operators $A_n$ are second 
order elliptic differential operators on $E:= L^p(\OO)$ subject to 
suitable boundary conditions (b.c.), where $\OO\subseteq \CR^d$ is some domain,
and $E_\theta = H_{\rm b.c.}^{2\theta,p}(\OO)$ 
is the corresponding Sobolev space. 
If $p\ge 2$ and $\theta\ge 0$ are chosen appropriately in relation to the dimension $d$, 
then $E_\theta$ is continuously and densely embedded
into $B:= C_{\rm b.c.}(\overline{\OO})$. 

In the present framework we can repeat
the procedure of the previous subsection to 
obtain convergence to maximal solutions 
of \eqref{eq.scp} with nonlinearities $F$ and $G$ which are locally Lipschitz continuous
from a corresponding convergence result for globally Lipschitz continuous coefficients. 
In particular, the results of 
Theorem \ref{t.main} and Corollary \ref{c.hoelderconv} (1) and (2)
generalise {\em mutatis mutandis} to the situation considered here.
Instead of spelling out the details we content ourselves with the statement
of the convergence result for the globally Lipschitz case.

\begin{prop}\label{p.banachlip}
Let $B$ be a Banach space, assume (E) and (A1)--(A5), and assume that \eqref{eq:exps} holds with $\tau=2$, i.e.,
$2<p<\infty$, $0\le \theta<\frac12$, $0\le \kappa_G<\frac12$ satisfy
$$ \theta+\kappa_G < \tfrac12 -\tfrac1p.$$
Moreover, let $F_n : [0,T]\times\Omega\times B \to E_{-\kappa_F}$ and
$G_n : [0,T]\times\Omega\times B \to \gamma (H, E_{-\kappa_G})$ be strongly 
measurable, adapted, and globally 
Lipschitz continuous in the third variable, uniformly with respect to the first and second variables.
If
\[
 \limn F_n(t,\omega, x) = F_\infty (t,\omega, x)\quad\mbox{and}\quad \limn G_n(t,\omega, x) = G(t,\omega, x)
\]
for all $(t,\omega,x) \in [0,T]\times\Omega\times B$
and  $\xi_n \to \xi_\infty$ in  $L^p(\Omega, \cF_0, \P; B)$, then:
\begin{enumerate}
 \item For each $n \in \OCN$, the problem 
\eqref{eq.scp} with coefficients $(A_n,F_n,G_n)$ and initial datum $\xi_n$
has a unique mild solution $\bX_n$ in 
$L^p(\Omega; C([0,T]; B))$;
 \item For all 
$1\leq q < p$, $$\bX_n \to \bX_\infty \ \hbox{ in }\,L^q(\Omega; C([0,T]; B)).$$
\item If $\lambda, \delta \geq 0$ satisfy $\lambda + \delta < \half - \frac1p - \kappa_G$ 
then 
\[ X_n - S_n(\cdot )\xi_n \to X_\infty - S(\cdot )\xi_\infty \quad\mbox{in}\quad
 L^q(\Omega; C^{\lambda}([0,T]; E_\delta ))
\]
for all $1\leq q < p$.
\end{enumerate}
\end{prop}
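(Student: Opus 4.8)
The plan is to construct each $\bX_n$ as the unique fixed point, in the space $L^p(\Omega; C([0,T]; B))$, of the map
\[
(\Phi_n Y)(t) = S_n(t)\xi_n + \int_0^t S_n(t-s) F_n(s, Y(s))\,ds + \int_0^t S_n(t-s) G_n(s, Y(s))\,dW(s),
\]
and then to deduce convergence by comparing $\Phi_n$ with $\Phi_\infty$. The essential observation is that $F_n$ and $G_n$ take values in $E_{-\kappa_F}$ and $\gamma(H, E_{-\kappa_G})$, so both convolutions are objects in the interpolation scale of the UMD space $E$, and only the \emph{output} has to be measured in the non-UMD space $B$. Using (A5) I would fix an exponent $\delta$ with $\theta \le \delta < \half - \tfrac1p - \kappa_G$; such a $\delta$ exists precisely because the hypothesis reads $\theta + \kappa_G < \half - \tfrac1p$, and for it the chain $E_\delta \inject E_\theta \inject B$ holds. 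Then, thanks to (E), the stochastic convolution is well defined in the $E$-scale and, by \cite[Proposition 4.2]{vNVW08} (applied with $\lambda = 0$), has trajectories in $C([0,T]; E_\delta)$; composing with $E_\delta \inject B$ turns this into the required $C([0,T]; B)$-estimate. The deterministic convolution is treated analogously, using the analytic smoothing $\n S_n(t-s)\n_{\cL(E_{-\kappa_F}, E_\delta)} \lesssim (t-s)^{-(\delta+\kappa_F)}$, whose singularity is integrable because $\theta + \kappa_F < 1$ from \eqref{eq:exps}. By (A1)--(A4) and the uniform Lipschitz and growth bounds on $F_n, G_n$, all of these estimates are uniform in $n$.

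For assertion (1) I would verify that $\Phi_n$ maps $L^p(\Omega; C([0,T_0]; B))$ into itself and is a strict contraction for $T_0$ small, the contraction constant depending only on the uniform data and hence being uniform in $n$. The Banach fixed point theorem produces $\bX_n$ on $[0,T_0]$, and concatenating over consecutive intervals of length $T_0$ yields the unique global $\bX_n \in L^p(\Omega; C([0,T]; B))$.

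Assertion (2) follows from the standard stability argument for fixed points. On $[0,T_0]$, writing $\bX_n - \bX_\infty = (\Phi_n \bX_n - \Phi_n \bX_\infty) + (\Phi_n \bX_\infty - \Phi_\infty \bX_\infty)$ and absorbing the first term via the uniform contraction estimate, one reduces to showing $\Phi_n \bX_\infty - \Phi_\infty \bX_\infty \to 0$ in $L^q(\Omega; C([0,T_0]; B))$ for $1\le q<p$. This splits, at the \emph{fixed} process $\bX_\infty$, into three convergences: $S_n(\cdot)\xi_n \to S_\infty(\cdot)\xi_\infty$, from $\xi_n\to\xi_\infty$ together with the strong resolvent convergence of the parts $A_n|_B$ (Lemma \ref{l.Bconv}); the deterministic convolution, from the pointwise convergence $F_n \to F_\infty$ by dominated convergence; and the stochastic convolution, from $G_n \to G_\infty$ by the convergence theorems for stochastic convolutions of \cite{KvN10}. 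These are the $B$-valued counterparts of \cite[Lemmas 4.4, 4.5]{KvN10}, routed through $E_\delta \inject B$ exactly as in the existence step; the passage from almost sure (or in probability) convergence to $L^q$-convergence for $q<p$ uses the uniform $L^p$-bounds and a Vitali argument. Propagating the convergence $\bX_n(T_0) \to \bX_\infty(T_0)$ in $L^p(\Omega; B)$ as new initial data and iterating over the subintervals gives (2) on all of $[0,T]$. Assertion (3) then follows because the compensated process $\bX_n - S_n(\cdot)\xi_n$ is exactly the sum of the two convolutions, which under \eqref{eq.ld} lie in $C^\lambda([0,T]; E_\delta)$ with uniform-in-$n$ bounds by \cite[Proposition 4.2]{vNVW08}; combining these H\"older bounds with the $L^q$-convergence from (2) as in \cite[Theorem 4.7]{KvN10} yields convergence in $L^q(\Omega; C^\lambda([0,T]; E_\delta))$.

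The main obstacle is the issue isolated in the first paragraph: since $B$ need not be UMD, there is no stochastic integral in $B$, and the entire construction must be realised through the $E$-scale. The delicate point is to make the fixed point estimates close up in the $B$-norm while the stochastic convolution is intrinsically an $E_\delta$-valued object; this forces the compatibility $\theta \le \delta < \half - \tfrac1p - \kappa_G$, and it is exactly the hypothesis $\theta + \kappa_G < \half - \tfrac1p$ that leaves room for such a $\delta$. Once this is in place, everything else is a uniform-in-$n$ bookkeeping of smoothing estimates already available from \cite{vNVW08} and \cite{KvN10}.
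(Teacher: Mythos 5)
Your proposal is correct and follows essentially the same route as the paper's proof: the same fixed point operator on $L^p(\Omega;C([0,T];B))$, with both convolutions estimated in the interpolation scale of the UMD space $E$ via \cite[Lemma 3.4, Proposition 4.2]{vNVW08} and then pushed into $B$ through (A5), existence by a uniform-in-$n$ contraction on small intervals patched together, and convergence by the stability argument of \cite[Theorems 4.3, 4.7]{KvN10} reduced to convergence of $\Lambda_{n,\xi_n,T}\phi$ at a fixed $\phi$ (semigroup term via Lemma \ref{l.Bconv}, convolutions via the pointwise convergence of $F_n$, $G_n$). The only cosmetic difference is your auxiliary exponent $\delta\ge\theta$, where the paper works directly in $E_\theta$.
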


Note that the condition 
$ \theta+\kappa_F<1$, which also results from \eqref{eq:exps} if we take $\tau=2$, 
is automatically satisfied in view of the standing assumptions $0\le \theta,\kappa_F<\frac12$.

\begin{proof}[Sketch of proof]
Towards (1),  let ${V_T}:=L^p_\mathbb{F}(\Omega; C([0,T]; B))$ denote the space of continuous, adapted $B$-valued processes $\phi$
such that $\|\phi\|^p_{V_T} := \expect \|\phi\|_{C([0,T];B)}^p < \infty$. 
By (A4), $S_n(\cdot )\xi_n \in {V_T}$. 

Consider the fixed point operators $\Lambda_{n,\xi_n,T}$  from ${V_T}$ into itself 
defined by
\[ \big[ \Lambda_{n,\xi_n,T} \phi \big](t) := S_n(t)\xi_n + \int_0^tS_n(t-s)F_n(s,\phi (s))\, ds 
 + \int_0^tS_n(t-s)G_n(s,\phi (s))\, ds.
\]
Using \cite[Lemma 3.4]{vNVW08}, we see 
that $\bS_n\ast F_n(\cdot, \phi )$ 
is in $L^p_\mathbb{F}(\Omega; C([0,T]; E_\theta))$, and hence in ${V_T}$, for all 
$\phi \in {V_T}$. 
Moreover, by the assumptions on 
$G_n$, we see that $s \mapsto S_n(t-s)G_n(s,\phi (s))$ 
is in $L^p(\Omega; L^2(0,t; \gamma (H,E_\theta )))$. Since $E_\theta$, 
being isomorphic to $E$, is UMD with type 2, 
this function is stochastically 
integrable in $E_\theta$. In fact, using 
\cite[Proposition 4.2]{vNVW08} one finds that the 
stochastic convolution 
defines an element of $L^p_\mathbb{F}(\Omega; C([0,T]; E_\theta))$, and hence of ${V_T}$.

Standard arguments show that for each $n$, $\Lambda_{n,\xi_n,T}$ is Lipschitz continuous on ${V_T}$ 
and the Lipschitz constants of $\Lambda_{n,\xi_n,T}$ 
converge to $0$ as $T\downarrow 0$. Hence, for small enough $T$, 
solutions of \eqref{eq.scp} can be obtained from Banach's 
fixed point theorem and global solutions of \eqref{eq.scp} can be 
`patched together' inductively from solutions on smaller time intervals.\medskip 

(2) As in the proof of \cite[Theorem 4.3]{KvN10} it suffices to prove that 
$\Lambda_{n,\xi_n,T}\phi \to \Lambda_{\infty, \xi_\infty,T}\phi$ in ${V_T}$ 
for all $\phi \in {V_T}$ with $T$ small.
Convergence of $S_n(\cdot )\xi_n \to S_\infty (\cdot )\xi_\infty$ follows from Lemma \ref{l.Bconv}. As for the 
stochastic and deterministic convolutions, as in \cite[Lemma 4.5]{KvN10} one sees 
that they actually converge in $L^p(\Omega; C([0,T]; E_\theta ))$, and hence 
in $L^p(\Omega; C([0,T]; B))$ by (A5).\medskip 

(3) follows similarly as in the proof of \cite[Theorem 4.7]{KvN10}.
\end{proof}

\subsection{Extension to measurable initial values}\label{sec:meas}
In our results so far, we have assumed that $\xi_n \to \xi_\infty$ in a suitable $L^p$-space. It is  
routine to extend these results to initial data $\xi_n$ which are merely assumed to be 
strongly measurable (but 
no integrability assumption is imposed) and convergent in measure. 

Given a strongly $\F_0$-measurable initial value $\xi$, 
a maximal solution of \eqref{eq.scp} 
is constructed as follows; cf.\ \cite[Section 7]{vNVW08}.
For $K \in \CN$, we put $\xi^{(K)}:= \xi\one_{\{\|\xi\|\leq K\}}$. If $(X^{(K)}(t))_{t\in [0,\sigma^{(K)})}$ denotes 
the maximal solution of \eqref{eq.scp} with initial datum $\xi^{(K)}$, then one sees 
that on the set $\{\|\xi\|\leq K\}$ we have $X^{(K)}\one_{\{t\in [0,\sigma^{(K)})\}} \equiv X^{K'}
\one_{\{t\in [0,\sigma^{(K')})\}}$
for all $K'\geq K$, see \cite[Lemma 8.2]{vNVW08}. Hence a maximal solution of \eqref{eq.scp} can be constructed 
from the solution $\bX^{(K)}$.

Now suppose that for $n \in \OCN$ we are given a measurable $\xi_n$ such that $\xi_n \to \xi_\infty$ in 
measure. Then for fixed $K$ the truncated random variables $\xi_n^{(K)}$ converge to $\xi_\infty^{(K)}$ in measure 
and are uniformly bounded, hence uniformly $p$-integrable for all $1\le p<\infty$. It follows 
that $\xi_n^{(K)} \to \xi_\infty^{(K)}$ in every $L^p$. Hence, cutting off the nonlinearities 
$F_n$ and $G_n$ as well as the initial data and arguing as before, we obtain convergence results for 
solutions of \eqref{eq.scp} with measurable initial data. We leave the details to the reader.

\section{Global existence for reaction diffusion type equations}\label{sect.3}

In this section, we shall make additional assumptions on the coefficients 
similar to those considered by Brze\'zniak and G{\c{a}}tarek \cite{bg99} and 
Cerrai \cite{Cerrai}.

Throughout this section we shall assume that $B$ is a Banach space and that $E$ is 
a UMD space with type $2$. {\em 
Unless explicitly stated otherwise all norms $\n \cdot\n$ are taken in $B$}.

Let us first recall 
that in a Banach space $B$, the \emph{subdifferential of the norm at $x$} is given by 
\[ \partial\|x\| := \big\{ x^* \in B^*\, : \, \|x^*\| =1\quad\mbox{and}\quad \langle x, x^*\rangle =1\big\}\, .\]
We recall, see \cite[Proposition D.4]{dpzab}, that if $u : I \to B$ is a differentiable function, then 
$\|u(\cdot )\|$ is differentiable from the right and from the left with 
\begin{align*}
\frac{d^+}{dt}\|u(t)\| & =   \max\big\{ \big\langle u'(t) ,x^*\big\rangle\,:\, x^*\in \partial\|u(t)\|\big\}, \\
\frac{d^-}{dt}\|u(t)\| & =  \min\big\{ \big\langle u'(t), x^*\big\rangle \,:\, x^*\in \partial\|u(t)\|\big\}.
\end{align*}
Since $\|u(\cdot)\|$ is everywhere differentiable from the left and from the right, it follows from 
\cite[Theorem 17.9]{hestro} that $\|u(\cdot )\|$ is differentiable, except for at most countably many points. 
In particular, $\|u(\cdot )\|$ is absolutely continuous and for almost all $t\in I$ we have 
\[
 \frac{d}{dt}\|u(t)\| = \big\langle u'(t), x^*\rangle \ \hbox{ for all }\, x^*\in \partial\|u(t)\|.
\]

Throughout this section the following standing assumptions will be in place.
We assume that $E$ is a UMD space with type $2$ and suppose 
that $A$ satisfies (A1), i.e., $A$ 
is the generator of a strongly continuous and analytic semigroup $\bS$ on $E$. 
Furthermore, we assume that (A4) and (A5) are satisfied and that $\bS^B$ 
is a strongly continuous contraction semigroup on $B$. In particular, 
$A|_B$ is dissipative. Concerning the maps $F$ and 
$G$ we make the following assumptions.

\begin{itemize}
 \item[(F$'$)] The map $F: [0,T]\times \Omega \times B \to B$ is 
locally Lipschitz continuous in the sense that
for all $r>0$, there exists a constant $L_F^{(r)}$ such that
\[ \|F(t,\omega, x) - F(t,\omega, y)\| \leq L^{(r)}_F\|x-y\|\]
for all $\|x\|,\|y\| \leq r$ and $(t,\omega ) \in [0,T]\times \Omega$ and there exists a 
constant $C_{F,0}\geq 0$ such that
\[ \|F(t,\omega, 0)\| \leq C_{F,0}\]
for all $t \in [0,T]$ and $\omega\in \Omega$.
Moreover, for all $x \in B$ the map $(t,\omega) \mapsto F(t,\omega, x)$ is strongly measurable and adapted.

For suitable constants $a',b'\ge 0$ and $N \geq 1$ we have
$$
 \langle Ax + F(t,x+y), x^*\rangle \leq a'(1+\|y\|)^N + b'\|x\|  
$$
for all $x \in \Dom(A|_B)$, $y \in B$, and $x^* \in \partial \|x\|$.
\item[(G$'$)] The map $G: [0,T]\times \Omega \times B \to \gamma (H, E_{-\kappa_G})$ is 
 locally Lipschitz continuous 
in the sense that for all $r>0$ there exists a constant $L_G^{(r)}$ such that
\[ \|G(t,\omega, x) - G(t,\omega, y)\|_{\gamma (H, E_{-\kappa_G})} \leq L_G^{(r)}\|x-y\| \]
for all $\|x\|, \|y\| \leq r$ and $(t,\omega ) \in [0,T]\times \Omega$ and there exists a 
constant $C_{G,0}\geq 0$ such that
\[ \|G(t,\omega, 0\|_{\gamma (H, E_{-\kappa_G})} \leq C_{F,0}\]
for all $t \in [0,T]$ and $\omega\in \Omega$. Moreover, for all $x \in B$ and $h \in H$ the map 
$(t,\omega )\mapsto G(t,\omega, x)h$ is strongly measurable and adapted. 

Finally, for suitable constants  
$c \geq 0$ and $\e>0$ we have
$$
\|G(t,\omega, x)\|_{\gamma (H, E_{-\kappa_G})} \leq c'(1+\|x\|)^{\frac{1}{N}+\e}  
$$
for all $(t,\omega, x) \in [0,T]\times \Omega \times B$.
\end{itemize}

\begin{rem}
In the results to follow, the constant $\e$ in (G$'$) has to be sufficiently small.
\end{rem}

\begin{example}\label{ex.f}
Let $B = C(\overline{\OO})$ for some bounded domain $\OO \subset \CR^d$.
Let $F:[0,T]\times \Omega\times B\to B$ be given by
$$(F(t,\omega,x))(s) = f(t,\omega,s,x(s)),$$
where 
\begin{equation}\label{eq:reactionterm}
f(t,\omega,s,\eta) = - a(t,\omega,s) \eta^{2k+1} + \sum_{j=0}^{2k} a_j(t,\omega,s)\eta^j, \quad \eta\in\CR.
\end{equation}
We assume that there are constants $0<c\le C<\infty$ such that (cf. \cite{bg99})
$$c \le a(t,\omega,s)\le C, \quad |a_j(t,\omega, s)|\le C \quad (j=0,\dots, 2k+1)$$
for all $(t,\omega, s) \in [0,T]\times\Omega\times\overline{\OO}$.
 It is easy to see that, in this situation, for a suitable constant $a'\ge 0$ we have
\[ -a'(1+|\eta|^{2k+1}\one_{\{\eta \geq 0\}} )\leq f(t,\omega, s, \eta) 
\leq a' (1 +|\eta|^{2k+1}\one_{\{u\leq 0\}}) 
\]
for all $t\in [0,T], \omega \in \Omega, s \in \OO, \eta \in \CR$.
This, in turn, yields that
\[ f(t,\omega, s, \eta+\zeta) \cdot\hbox{sgn}\, \eta \leq a'(1+|\zeta|^{2k+1}) \]
for all $(t,\omega, s) \in [0,T]\times\Omega\times\overline{\OO}$ and $\eta,\zeta\in \CR$.
By the results of \cite[Section 4.3]{dpz92} this implies 
$$\langle F(t,\omega, x+y), x^*\rangle \leq a'(1+\|y\|^{2k+1})$$
for all $t\in [0,T]$, $\omega\in\Omega$, $x,y \in B$, and $x^* \in \partial \|x\|$. 
Since $A|_B$ is dissipative,  it follows that (F$'$) holds.
 \end{example}

The first main result of this section reads as follows.

\begin{thm}\label{t.dissex}
Let $B$ be a Banach space, assume (E), (A1), (A4), (A5), (F$'$), and (G$'$) with $\e>0$ sufficiently 
small, and assume that 
$2<p<\infty$, $0\le \theta<\frac12$, $0\le \kappa_F, \kappa_G<\frac12$, and 
$$\theta +\kappa_G< \tfrac12 - \tfrac{1}{Np}.$$ 
For all $\xi \in L^p(\Omega, \cF_0, \P; B)$, the maximal solution 
$(X(t))_{t \in [0, \sigma)}$ of \eqref{eq.scp} is global, i.e., 
we have $\sigma = T$ almost surely. Moreover, 
\[ \expect\|\bX\|_{C([0,T]; B)}^p \leq C (1+ \expect \|\xi\|^p),\]
where the constant $C$ depends on the coefficients only through 
the sectoriality constants of $A$ and the constants $a',b',c'$ and the exponent $N$.
\end{thm}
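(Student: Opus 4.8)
The plan is to prove global existence by a truncation-plus-a-priori-estimate scheme, exactly the situation the abstract machinery of Section~\ref{sect.1} and Corollary~\ref{c.main} was built to handle. First I would introduce the truncated coefficients $F^{(r)}$ and $G^{(r)}$ as in the proof of Theorem~\ref{t.main}, obtaining globally Lipschitz, linearly bounded coefficients and hence globally defined approximating processes $\bX^{(r)} = \sol(A, F^{(r)}, G^{(r)}, \xi)$. These agree with the maximal solution $\bX$ up to the stopping time $\rho^{(r)} = \inf\{t : \|X(t)\| > r\}$, and $\sigma = \lim_{r\to\infty}\rho^{(r)}$. By Corollary~\ref{c.main}(1), to conclude $\sigma = T$ almost surely it suffices to produce a uniform bound $\sup_r \expect\|\bX^{(r)}\|_{C([0,T];B)}^p \le C(1+\expect\|\xi\|^p)$ with $C$ independent of $r$; the same bound then yields the final moment estimate by Fatou's lemma.

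The heart of the matter is the a~priori estimate for the truncated processes. Here I would exploit the structure $\bX^{(r)} = S(\cdot)\xi + \bY^{(r)}$, where $\bY^{(r)} = \bX^{(r)} - S(\cdot)\xi$ is the \emph{compensated} solution. The point of splitting is that $\bY^{(r)}$ has better regularity: it lives in $C([0,T]; E_\delta)$ for some $\delta > 0$ by the estimates of \cite{vNVW08}, and under the condition $\theta + \kappa_G < \tfrac12 - \tfrac1{Np}$ one can arrange that the stochastic convolution with $G^{(r)}$ is controlled in a $C([0,T]; B)$-norm with a sublinear dependence on $\|\bX^{(r)}\|$, thanks to the growth exponent $\tfrac1N + \e$ in (G$'$). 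The deterministic drift is treated pathwise: applying the subdifferential calculus recalled before the statement to $u(t) = Y^{(r)}(t)$ (or rather to the full solution after subtracting the stochastic convolution, so that the remaining process is differentiable), one differentiates $\|\cdot\|$ along the trajectory. The dissipativity inequality in (F$'$),
\[
\langle Ax + F(t, x+y), x^*\rangle \le a'(1+\|y\|)^N + b'\|x\|,
\]
is designed precisely so that, writing the equation as $x = X^{(r)} - (\text{stochastic convolution})$ and $y = (\text{stochastic convolution}) + S(\cdot)\xi$, the drift term contributes at most $a'(1+\|y\|)^N + b'\|x\|$ to $\tfrac{d}{dt}\|x(t)\|$. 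This produces a differential inequality for $\|x(t)\|$ driven by the $N$-th power of the $B$-norm of $y$.

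The mechanism that closes the estimate is a Gronwall-type argument in the spirit of \cite{bg99}: the differential inequality gives $\|x(t)\| \le \|x(0)\| + \int_0^t\big(a'(1+\|y(s)\|)^N + b'\|x(s)\|\big)\,ds$, and after Gronwall in the $b'$-term one is left needing $\expect\sup_{t}\big(\int_0^t (1+\|y(s)\|)^N\,ds\big)^p < \infty$, uniformly in $r$. The crucial balancing is that $\|y\|$ involves the stochastic convolution of $G^{(r)}$, whose growth is only $\|X^{(r)}\|^{\frac1N+\e}$; raising to the $N$-th power and then to the $p$-th moment yields a dependence on $\expect\|\bX^{(r)}\|^{p(1+N\e)}$, which for $\e$ small enough is a sublinear perturbation of the left-hand side. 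A standard absorption argument (moving the small-exponent term to the left, or iterating) then gives the uniform bound.

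The main obstacle I anticipate is making this absorption rigorous while keeping all constants independent of $r$: one must carefully track that the truncation $F^{(r)}, G^{(r)}$ does not worsen the constants $a', b', c', N$ (it does not, since truncation only shrinks the argument's norm, and the one-sided estimate in (F$'$) and the growth bound in (G$'$) are preserved under radial truncation), and one must verify that the smallness of $\e$ suffices to ensure $N\e$ keeps the exponent $p(1+N\e)$ controllable against $p$ after the stochastic-convolution maximal inequality is applied. A secondary technical point is the regularity needed to apply the subdifferential calculus: the raw mild solution is not differentiable, so one first conditions on subtracting the stochastic convolution and works with the resulting process, invoking the factorisation/regularity results of \cite{vNVW08} to guarantee the requisite path regularity in $B$ and the integrability hypothesis of the absolute-continuity statement recalled above. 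Once these are in place, the uniform $L^p$-bound follows, Corollary~\ref{c.main}(1) gives $\sigma = T$ a.s., and the stated moment inequality drops out.
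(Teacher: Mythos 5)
Your architecture matches the paper's: truncate the coefficients, prove a uniform $L^p(\Omega;C([0,T];B))$ bound for the truncated solutions by combining the one-sided dissipativity estimate on the drift (via the subdifferential calculus and a Gronwall lemma in the spirit of \cite{bg99}) with a maximal estimate for the stochastic convolution, absorb, and conclude via the abstract convergence results (the paper invokes Corollary \ref{c.hoelderconv}). Two points, however, need repair.

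First, your decomposition double-counts the initial datum: with $x = X^{(r)} - \bS\diamond G$ and $y = \bS\diamond G + S(\cdot)\xi$ one has $x+y = X^{(r)} + S(\cdot)\xi \neq X^{(r)}$, so the inequality in (F$'$) does not apply to $F(\cdot,X^{(r)})$. The correct splitting (Lemma \ref{l.bg}) is $u = X^{(r)} - \bS\diamond G(\cdot,X^{(r)})$ --- which already contains $S(\cdot)\xi$ --- and $v = \bS\diamond G(\cdot,X^{(r)})$; then $u$ solves the inhomogeneous mild equation with initial value $\xi$ and one obtains $\|u(t)\|\le e^{b't}\bigl(\|\xi\|+a'\int_0^t(1+\|v(s)\|)^N\,ds\bigr)$. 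Keeping $S(\cdot)\xi$ inside $u$ rather than $v$ is also what produces a final bound linear in $\E\|\xi\|^p$ rather than $\E\|\xi\|^{Np}$. (A related detail: the differentiability needed for the subdifferential calculus is obtained in the paper by Yosida regularisation $nR(n,A)$ inside the Gronwall lemma, not by factorisation estimates.)

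Second, and more seriously, the absorption step as you describe it would fail. Raising the growth bound $\|G\|\le c'(1+\|x\|)^{\frac1N+\e}$ to the power $Np$ produces the exponent $p(1+N\e)$, which for $\e>0$ is strictly \emph{larger} than $p$ --- a superlinear, not sublinear, perturbation --- and a term of the form $C\,\E\|\bX^{(r)}\|_{C([0,T];B)}^{p(1+N\e)}$ cannot be ``moved to the left'' of $\E\|\bX^{(r)}\|_{C([0,T];B)}^{p}$. What actually closes the estimate is different: one first takes $\e=0$, so the exponent is exactly $p$, and observes that the prefactor carries a strictly positive power of $T$ because $\theta+\kappa_G<\alpha-\frac{1}{Np}$ for some $\alpha<\frac12$; this yields $\E\|\bX^{(r)}\|^p \le C_0 + C_1\E\|\xi\|^p + C_2(T)\,\E\|\bX^{(r)}\|^p$ with $C_2(T)\to 0$ as $T\downarrow 0$, after which one absorbs on a small interval and iterates finitely many times. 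A small $\e>0$ is then accommodated not by sublinearity but by applying Young's inequality to $\int_0^t(t-s)^{-2\alpha}\|G(s,X^{(r)}(s))\|^2\,ds$ with slightly sharper exponents (possible precisely because $2\alpha<1$ strictly), which brings the exponent of $\|\bX^{(r)}\|$ back down to $p$. Without this step your scheme either requires $\e=0$ or leaves an unabsorbable superlinear term on the right-hand side.
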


This result improves corresponding results in \cite{bg99,Cerrai} 
under similar assumptions on $F$ and $G$. In \cite{bg99}, global existence of 
a martingale solution was obtained for uniformly bounded $G$; in \cite{Cerrai}, 
rather restrictive simultaneous diagonalisability assumptions on $A$ and the noise were imposed.

In the proof of Theorem \ref{t.dissex} we will use the following lemma, 
which is a straightforward generalisation of \cite[Lemma 4.2]{bg99}.
For the reader's convenience we include the short proof.

\begin{lem}\label{l.bg}
Let $A$ be the generator of a strongly continuous contraction semigroup $\bS$ on $B$, $x \in B$ and $F: [0,T]\times B \to B$
satisfy condition (F$'$). If for some $\tau >0$ two continuous functions $u, v: [0,\tau) \to B$ 
satisfy
\[ u(t) = S(t)x + \int_0^t S(t-s)F(s, u(s) + v(s))\, ds \quad \forall\, t \in [0,\tau),\]
then 
\[ \|u(t)\| \leq e^{b't}\Big( \|x\| + \int_0^t a'(1+\|v(s)\|)^N\, ds\Big)\, .\]
\end{lem}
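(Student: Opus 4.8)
The plan is to reduce the estimate to a situation where $u(t)$ actually lies in $\Dom(A)$, so that the dissipativity bound in (F$'$) can be applied and the norm $\|u(t)\|$ can be differentiated. The obstacle is precisely that $u$ is only a mild solution: a priori $u(t)\notin\Dom(A)$, so neither the inequality in (F$'$) (which requires its first argument to lie in $\Dom(A)$) nor the differentiation formula recalled before the statement can be invoked directly. I would remove this difficulty by a Yosida-type regularisation and then pass to the limit.

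First I set $J_\lambda := \lambda R(\lambda, A)$ for $\lambda>0$. Since $\bS$ is a contraction semigroup, the Hille--Yosida theorem gives $\|J_\lambda\|_{\cL(B)}\le 1$, moreover $J_\lambda$ commutes with each $S(t)$ and $J_\lambda y\to y$ as $\lambda\to\infty$ for every $y\in B$. Applying $J_\lambda$ to the mild-solution identity and using that it commutes with $\bS$, the function $u_\lambda:=J_\lambda u$ satisfies
\[
 u_\lambda(t) = S(t)J_\lambda x + \int_0^t S(t-s)\,J_\lambda F(s,u(s)+v(s))\,ds.
\]
Now $J_\lambda x\in\Dom(A)$, while $s\mapsto J_\lambda F(s,u(s)+v(s))$ is continuous with values in $\Dom(A)$ and, with $A_\lambda:=AJ_\lambda$ (a bounded operator), its image $A_\lambda F(\cdot,u+v)$ under $A$ is continuous as well. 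By the standard regularity theory for abstract Cauchy problems this forces $u_\lambda$ to be a strict solution, i.e.\ $u_\lambda\in C^1$, $u_\lambda(t)\in\Dom(A)$, and $u_\lambda'(t)= A u_\lambda(t) + J_\lambda F(t,u(t)+v(t))$.

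Next I differentiate the norm. As $u_\lambda$ is continuously differentiable, $t\mapsto\|u_\lambda(t)\|$ is locally absolutely continuous and, by the differentiation formula recalled before the statement, at almost every $t$ one has $\frac{d}{dt}\|u_\lambda(t)\|=\langle u_\lambda'(t),x_t^*\rangle$ for every $x_t^*\in\partial\|u_\lambda(t)\|$. I then write $J_\lambda F = F + (J_\lambda - I)F$ and, setting $y_\lambda(t):=(u(t)-u_\lambda(t))+v(t)$ so that $u(t)+v(t)=u_\lambda(t)+y_\lambda(t)$, I split
\[
 \langle u_\lambda'(t),x_t^*\rangle = \langle Au_\lambda(t)+F(t,u_\lambda(t)+y_\lambda(t)),x_t^*\rangle + \langle (J_\lambda-I)F(t,u(t)+v(t)),x_t^*\rangle.
\]
The first term is $\le a'(1+\|y_\lambda(t)\|)^N + b'\|u_\lambda(t)\|$ by (F$'$) applied at the point $u_\lambda(t)\in\Dom(A)$, and the second is $\le \|(J_\lambda-I)F(t,u(t)+v(t))\|$ since $\|x_t^*\|=1$. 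Integrating and using $\|u_\lambda(0)\|=\|J_\lambda x\|\le\|x\|$ yields
\[
 \|u_\lambda(t)\| \le \|x\| + \int_0^t\Big[a'(1+\|y_\lambda(s)\|)^N + b'\|u_\lambda(s)\| + \|(J_\lambda-I)F(s,u(s)+v(s))\|\Big]\,ds.
\]

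Finally I let $\lambda\to\infty$. The contraction bound gives $\|u_\lambda(s)\|\le\|u(s)\|$, hence $\|y_\lambda(s)\|\le 2\|u(s)\|+\|v(s)\|$ and $\|(J_\lambda-I)F(s,u(s)+v(s))\|\le 2\|F(s,u(s)+v(s))\|$; since $u$, $v$ and $F(\cdot,u+v)$ are continuous, these furnish $s$-integrable dominating functions on $[0,t]$ for all three integrands, while $J_\lambda\to I$ strongly gives $u_\lambda(s)\to u(s)$, $y_\lambda(s)\to v(s)$ and $(J_\lambda-I)F(s,u(s)+v(s))\to 0$ pointwise. Dominated convergence then produces
\[
 \|u(t)\| \le \|x\| + \int_0^t\Big[a'(1+\|v(s)\|)^N + b'\|u(s)\|\Big]\,ds,
\]
and since $t\mapsto \|x\|+\int_0^t a'(1+\|v(s)\|)^N\,ds$ is nondecreasing, Gronwall's inequality yields the claimed bound. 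The hard part will be the regularisation step: one must check that $u_\lambda$ is genuinely a strict solution, so that both the pointwise differentiation of $\|u_\lambda(\cdot)\|$ and the use of (F$'$) at $u_\lambda(t)$ are legitimate, and that the spurious term $(J_\lambda-I)F$ together with the polynomial factor $(1+\|y_\lambda\|)^N$ stay under uniform control in $\lambda$ until the limit is taken.
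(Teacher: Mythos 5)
Your proof is correct and follows essentially the same route as the paper's: Yosida regularisation $u_\lambda = \lambda R(\lambda,A)u$, the subdifferential differentiation formula combined with the dissipativity estimate in (F$'$), and Gronwall. The only (harmless) difference is bookkeeping: the paper shifts the argument of $F$ to $u_n+v$ and collects the error in a remainder $r_n = F_n(\cdot,u+v)-F(\cdot,u_n+v)$ that tends to $0$ by local Lipschitz continuity, then applies Gronwall before passing to the limit, whereas you keep $F$ evaluated at $u+v=u_\lambda+y_\lambda$, absorb the shift into the $y$-slot of (F$'$), and take the limit by dominated convergence before invoking Gronwall.
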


\begin{proof}
For $n \in \CN$, put $u_n(t) := nR(n,A)u(t)$, $x_n := nR(n,A)x$ 
and $F_n(t, y) := nR(n,A)F(t,y)$. Then
\[ u_n(t) = S(t)x_n + \int_0^tS(t-s)\big[ F(s,u_n(s)+v(s)) + r_n(s)\big]\, ds \]
where $r_n(s) = F_n(s,u(s)+v(s)) - F(s, u_n(s) + v(s))$. It follows that $u_n$ is differentiable with
\[ u_n'(t) = Au_n(t) + F(t,u_n(t) + v(t)) + r_n(t)\, .\]
By the observations at the beginning of this section, for almost all $t\in (0,T)$ we have,
for all $x^* \in \partial\|u_n(t)\|$,
\[ 
\begin{aligned}
\frac{d}{dt}\|u_n(t)\| & = \big\lb A(t)u_n(t)+ F(s,u_n(t)+ v(t)) + r_n(t),x^*\big\rb\\
 & \leq a'(1+\|v(t)\|)^N + b'\|u_n(t)\| + \|r_n(t)\|.
\end{aligned}
\]
Thus, by Gronwall's lemma,
\[ \|u_n(t)\| \leq e^{b't}\Big( \|x_n\| + \int_0^t a'(1+\|v(s)\|)^N+ \|r_n(s)\|\, ds\Big)\, .\]
Since $\|nR(n,A)\|\leq 1$ and $nR(n,A)\to I$ strongly as $n\to\infty$, 
the assertion follows
upon letting $n\to \infty$.
\end{proof}

\begin{proof}[Proof of Theorem \ref{t.dissex}] 
Let us first assume that (G$'$) is satisfied with $\e=0$; in the proof we indicate
the reason why a small $\e>0$ can be allowed.

We define 
\[ F_n(t,\omega, x) := \left\{\begin{array}{ll}
                               F(t,\omega, x) & \hbox{if } \|x\|\leq n,\\
			       F\big(t,\omega, \frac{nx}{\|x\|}\big) & \mbox{otherwise} .
                              \end{array}
 \right.
\]
We also set $\bX_n := \sol (A, F_n, G, \xi)$. 

Let us first note that for $x \in \Dom(A|_B)$, $y \in B$, and $x^* \in \partial \|x\|$, we have
\begin{equation}\label{eq.dissipative}
 \langle Ax + F_n(t, \omega,x+y), x^*\rangle \leq a'(1+\|y\|)^N + b\|x\|\,  
\end{equation}
for all $(t,\omega ) \in [0,T]\times\Omega$.
If $\|x+y\|\leq n$, then this follows directly from (F$'$). If $\|x+y\|>n$, then
\begin{align*}
 \big\langle Ax + F_n(t,\omega, x+y), x^*\big\rangle & =  
\Big\langle Ax + F\big(t,\omega, \frac{nx}{\|x+y\|} + \frac{ny}{\|x+y\|}\Big),
x^*\big\rangle\\
 & =  \Big\langle A \frac{nx}{\|x+y\|}+ F\big(t,\omega, \frac{nx}{\|x+y\|} + \frac{ny}{\|x+y\|}\big),
x^*\Big\rangle\\
&\quad + (1-\frac{n}{\|x+y\|})\langle Ax,x^*\rangle\\
& \leq  a'\Big( 1 + \Big(\frac{n\|y\|}{\|x+y\|}\Big)\Big)^N + b'\Big\|\frac{n\|y\|}{\|x+y\|}\Big\|\\
& \leq  a' (1+\|y\|^N) + b'\|x\|
\end{align*}
where we have used (F$'$) and the dissipativity of $A$ in $B$ in the third 
step and $\|x+y\|>n$ in the fourth.

Trivially,
\begin{equation}\label{eq:bXn}
\begin{aligned}
\ &  \expect\|\bX_n\|_{C([0,T]; B)}^p 
\\ & \hskip1.4cm  \lesssim  \expect \|S(\cdot )\xi + \bS\ast F_n(\cdot, \bX_n)\|_{C([0,T]; B)}^p
+ \expect \|\bS \diamond G(\cdot, \bX_n)\|_{C([0,T]; B)}^p \, .
\end{aligned}
\end{equation}
By \eqref{eq.dissipative} and Lemma \ref{l.bg}, applied with 
\[ u_n := \bX_n - \bS\diamond G(\cdot, \bX_n),\quad v_n =  \bS \diamond G(\cdot, \bX_n ), \]
we obtain
\begin{align*}
 & \expect \|S(\cdot )\xi + \bS\ast F_n(\cdot, \bX_n)\|_{C([0,T]; B)}^p
\\ & \hskip1.4cm =
\expect \sup_{t \in [0,T]} \Big\n S(\cdot )\xi + \int_0^t S(t-s)F_n(s,u_n(s)+v_n(s))\, ds\Big\n^p 
\\ & \hskip1.4cm \leq  e^{b'pT}
\expect \sup_{t \in [0,T]} \Big(\|\xi\| + \int_0^t a'\big( 1 +\|v_n(s) \|\big) ^N\, ds 
\Big)^p\\ 
& \hskip1.4cm \lesssim  e^{b'pT}\expect
\big(1+ \|\xi\|^p+ \|\bS\diamond G(\cdot , \bX_n)\|_{C([0,T]; B)}^N\big)^p\\
& \hskip1.4cm \lesssim  e^{b'pT}T^p\big( 1+ \expect\|\xi\|^{p} + \expect\|\bS\diamond G(\cdot , \bX_n)\|_{C([0,T]; B)}^{Np}\big)\, .
\end{align*}
Since $\theta + \kappa_G < \half -\frac{1}{Np}$, we may pick $\alpha \in (0,\half )$ such that
$\theta + \kappa_G < \alpha - \frac{1}{Np}$. Then, for some $\e>0$,
\begin{align*}
\ &  \expect\|\bS\diamond G(\cdot , \bX_n)\|_{C([0,T]; B)}^{Np}
\\ & \hskip1.4 cm  \lesssim \expect\|\bS\diamond G(\cdot , \bX_n)\|_{C([0,T]; E_\theta)}^{Np}
\\ & \hskip1.4 cm \lesssim T^{\e Np} \E \int_0^T \|s \mapsto (t-s)^{-\alpha} 
G(\cdot, \bX_n)\|_{\gamma (L^2(0,t; H), E_{-\kappa_G})}^{Np}\, dt
\\ & \hskip1.4 cm \lesssim T^{\e Np} \E \int_0^T  \|s \mapsto (t-s)^{-\alpha} G(\cdot, \bX_n)
\|_{L^2(0,t;\gamma( H, E_{-\kappa_G}))}^{Np}\, dt
\\ & \hskip1.4 cm = T^{\e Np} \E \int_0^T \Big( \int_0^t (t-s)^{-2\alpha}\|G(s, X_n(s))
\|_{\gamma (H, E_{-\kappa_G})}^{2}\, ds\Big)^{\frac{Np}{2}}\, dt
\\ & \hskip1.4 cm \stackrel{(*)}{\leq} T^{\e Np} \Big(\int_0^T t^{-2\alpha} \, dt\Big)^{\frac{Np}{2}} \expect 
\int_0^T \|G(t, X_n(t))\|_{\gamma (H, E_{-\kappa_G})}^{Np}\, dt
\\ & \hskip1.4 cm \le T^{(\frac12-\alpha+\e)Np} (c')^{Np} 
 \expect \int_0^T (1+\|X_n(t)\|)^{p}\, dt
\\ & \hskip1.4 cm \lesssim T^{(\frac12-\alpha+\e)Np+1}(c')^{Np} 
(1 +  \expect \|\bX_n\|_{C([0,T]; B)}^p)\, .
\end{align*}
In this computation we used the following facts. 
The first inequality follows from the continuity of the embedding
$E_\theta\embed B$, the second 
from \cite[Proposition 4.2]{vNVW08} (here the condition on $\alpha$ is used),
the third uses the fact that if $(S,\mu)$ is a $\sigma$-finite measure space,
$H$ a Hilbert space and $F$ a Banach space 
with type 2, then we have a continuous embedding $L^2(S,\mu;\gamma(H,F))\embed 
\g(L^2(S,\mu;H),F)$ of norm less than or equal to the type $2$ constant of $F$,
in the next inequality we used Young's inequality, and in the sixth step the assumptions on $G$.

Because of the strict inequality $\a<\frac12$, in step $(*)$ we can apply Young's inequality
with slightly sharper exponents. This creates room (explicitly computable in terms of the other
exponents involved) for a small $\e>0$ in Hypothesis (G$'$). 

Combining these estimates we obtain
\begin{align*}
\expect \|S(\cdot )\xi +& \bS\ast F_n(\cdot, \bX_n)\|_{C([0,T]; B)}^p\\
& \lesssim e^{b'pT}T^p
\big(1 + \expect\|\xi\|^p + T^{(\frac12-\alpha+\e)Np+1} (1+ \expect \|\bX_n\|_{C([0,T]; B)}^p)\big)\, .
\end{align*}
Next, 
\begin{align*} 
\expect \|\bS \diamond G(\cdot, \bX_n)\|_{C([0,T]; B)}^p 
 & \leq \big(\expect \|\bS 
\diamond G(\cdot, \bX_n)\|_{C([0,T]; B)}^{Np}\big)^{\frac{1}{N}}
\\ & \lesssim T^{(\frac12-\alpha+\e)Np+1}(1+\expect \|\bX_n\|_{C([0,T]; B)}^p)\, .
\end{align*}
Substituting these estimates into \eqref{eq:bXn} we obtain
\[ \expect\|\bX_n\|_{C([0,T]; B)}^p \leq C_0+C_1 \expect \|\xi\|^{p} + C_2(T) \expect\|\bX_n\|_{C([0,T]; B)}^p \]
for a certain constants $C_0$, $C_1$ and a function $C_2(T)$ which does not depend on $\xi$ and
converges to $0$ as $T\downarrow 0$.
Hence, if $T>0$ is small enough,
we obtain 
$$\expect\|\bX_n\|_{C([0,T]; B)}^p \leq (1-C_2(T))^{-1}(C_0+ C_1\expect \|\xi\|^{p}).$$ 

Iterating this procedure a finite number of times, 
it follows that given $T>0$, there exists a 
constant $C$ as in the statement such that
$\sup_n \expect\|\bX_n\|_{C([0,T]; B)}^p \leq C(1+\expect\|\xi\|^p) < \infty$.
By Corollary \ref{c.hoelderconv}, the lifetime of $X$ equals $T$ almost 
surely and we have
$\bX_n \to \bX$ in $L^q(\Omega; 
C([0,T]; B))$ for all $1\leq q< p$.
\end{proof}

Our next aim is to prove a version of 
Theorem \ref{t.dissex} (Theorem \ref{t.dissex2} below) which, in return for an additional
assumption on $F$, allows
nonlinearities $G$ of linear growth. For this purpose we introduce the following hypotheses.

\begin{itemize}
 \item[(F$''$)] There exist constants $a'',b'',m>0$ such that the 
function $F: [0,T]\times \Omega \times B \to B$ 
satisfies
$$
\ \qquad \lb F(t,\omega, y+x) - F(t,\omega, y),x^*\rb \leq  a''(1+\|y\|)^m -b''\|x\|^m
$$
for all $t \in [0,T]$, $\omega \in \Omega$, $x,y \in B$, and $x^* \in \partial \|x\|$,
and 
\[ \|F(t,y)\| \leq a''(1+\|y\|)^m \]
for all $y \in B$. 

\item[(G$''$)]
The function 
$G: [0,T]\times\Omega\times B \to \gamma (H, E_{-\kappa_G})$ satisfies the measurability and 
adaptedness assumption of (G$'$) and is 
locally Lipschitz continuous 
and of linear growth. Moreover, we have $$\|G(t,\omega, 0)\|_{\gamma (H, E_{-\kappa_G})} \leq C_{G,0}$$ for all 
$(t,\omega) \in [0,T]\times \Omega$ and a suitable constant $C_{G,0}\geq 0$. 
\end{itemize}

\begin{example}\label{ex.f2}
The map $F$ described in Example \ref{ex.f} also satisfies condition (F$''$). Indeed, for the function 
$f$ as in Example \ref{ex.f}, it is easy to see that for certain constants $a_1, a_2 \in \CR$ and $b_1,b_2>0$
we have
\[a_1 - b_1\eta^{2k+1} \leq f(t,\omega, s, \eta ) \leq a_2 - b_2\eta^{2k+1} \]
for all $(t,\omega, s, \eta ) \in [0,T]\times\Omega\times \overline{\OO}\times \CR$. But this 
yields that
\begin{equation}\label{eq.toshow}
 W:= \big[ f(t,\omega, s, \eta+\zeta ) - f(\zeta )\big]\cdot\hbox{sgn}\,\eta \leq a - b|\eta|^{2k+1} + c|\zeta|^{2k+1}
\end{equation}
for certain positive constants $a,b,c$ and all $(t,\omega, s) \in [0,T]\times \Omega \times \overline{\OO}$ 
and $\eta, \zeta \in \CR$.\smallskip 

To see this, we distinguish several cases.
\begin{itemize}
 \item $\eta,\zeta \geq 0$. In this case,
\begin{align*}
 W & \leq  a_2 - b_2(\eta +\zeta )^{2k+1} - a_1 + b_1\zeta^{2k+1}\\
& \leq  a_2 - a_1 - b_2|\eta|^{2k+1} + b_1|\zeta|^{2k+1}
\end{align*}
since $\eta +\zeta \geq \eta = |\eta|$.
\item $\eta, \zeta \leq 0$. In this case, 
\begin{align*}
 W & \leq  a_2 - b_2\zeta^{2k+1} - a_1 + b_1(\eta+\zeta)^{2k+1}\\
 &= a_2 - a_1 + b_2|\zeta|^{2k+1} - b_2(|\eta | + |\zeta|)^{2k+1}\\
& \leq  a_2-a_1 + b_2|\zeta|^{2k+1} - b_2|\eta|^{2k+1}.
\end{align*}
\item $\eta \leq 0 \leq \zeta$. In this case,
\begin{align*}
 W & \leq  a_2 - b_2\zeta^{2k+1} - a_1 + b_1(\eta+ \zeta)^{2k+1}\\
& = a_2 - b_2|\zeta|^{2k+1} - a_1 + b_1(|\zeta| - |\eta|)^{2k+1}. 
\end{align*}
If $|\zeta|\geq |\eta|$, then this can be estimated by
\[ a_2 -a_1 - b_2|\eta|^{2k+1} + b_1|\zeta|^{2k+1}\, .\]
If $0\neq |\zeta| \leq |\eta|$, then
\begin{align*}
W & \leq  a_2-a_1 + b_1|\zeta|^{2k+1}\big(1-\big|\frac{\eta}{\zeta}\big|\big)^{2k+1}\\
& \leq a_2-a_1 + b_1|\zeta|^{2k+1}\Bigg(1 - \big|\frac{\eta}{\zeta}\big|^{2k+1} 
+ \sum_{j=1}^{2k}\binom{2k+1}{l}\Bigg)\, . 
\end{align*}
\item The case where $\zeta \leq 0 \leq \eta$ can be handled similarly.
\end{itemize}
This shows that \eqref{eq.toshow} holds for $a= a_2 -a_1  ,b = \min\{b_1,b_2\}$
and $c=\max\{b_1,b_2\}\big(1+ \sum_{j=1}^{2k}\binom{2k+1}{l}\big)$. Now, with the same strategy as 
in \cite{dpz92}, one infers (F$''$) from \eqref{eq.toshow}.
\end{example}

Following the ideas of \cite{Cerrai}, we proceed through the use of a comparison principle. 
For the reader's convenience we include the proof, which is similar to that of 
\cite[\S9 Satz IX]{walter}.
\begin{lem}\label{l.comp}
Let $f: (a,b)\times (c,d)\to \CR$ be continuous and uniformly locally Lipschitz continuous in the second 
variable, i.e., for all compact $K \subseteq (c,d)$ there exists a constant $L=L(K)$ such that
\[ |f(t,x) - f(t,y)|\leq L|x-y|\quad\forall\, x,y \in K, \ t \in (a,b)\, .\]
Suppose the functions $u^+, u^- :[\alpha, \beta]\to (c,d)$
are absolutely continuous functions and satisfy, for almost all $t \in (\alpha, \beta)$,  
\[ \frac{d}{dt}u^+(t) \geq f(t, u^+(t)), \quad \frac{d}{dt}u^-(t) \leq f(t,u^-(t)).  \]
If $u^+(t_0)>u^-(t_0)$ for some $t_0 \in [\alpha, \beta]$, 
then $u^+(t) > u^-(t)$ for all $t \in [t_0,\beta ]$.
\end{lem}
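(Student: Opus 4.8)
The plan is to argue by contradiction, reducing the statement to a one-dimensional Gronwall-type estimate for the difference $w := u^+ - u^-$. Since $u^+$ and $u^-$ are absolutely continuous, so is $w$, and by hypothesis $w(t_0) > 0$. Suppose the conclusion fails. Then, by continuity of $w$, the set $\{ t \in [t_0,\beta] : w(t) = 0\}$ is nonempty and closed; since $w(t_0)>0$ forces $w>0$ in a neighbourhood of $t_0$, this set has a smallest element $t_1 > t_0$. Thus $w > 0$ on $[t_0,t_1)$, i.e.\ $u^+ > u^-$ there, while $w(t_1) = 0$.

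Next I would localise the Lipschitz constant. The functions $u^\pm$ are continuous on the compact interval $[t_0,t_1]$ with values in $(c,d)$, so their combined range is contained in some compact set $K \subseteq (c,d)$, and the hypothesis provides a constant $L = L(K)$ with $|f(t,x) - f(t,y)| \leq L|x-y|$ for all $x,y \in K$ and all $t \in (a,b)$. Subtracting the two differential inequalities and using that $u^+(t) - u^-(t) = w(t) = |w(t)|$ on $[t_0,t_1)$, I obtain, for almost every $t \in (t_0,t_1)$,
\[
 w'(t) = (u^+)'(t) - (u^-)'(t) \geq f(t,u^+(t)) - f(t,u^-(t)) \geq -L\,|u^+(t) - u^-(t)| = -L\,w(t).
\]

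Finally I would integrate this inequality via the integrating factor $e^{Lt}$. Because $w$ is absolutely continuous and $t \mapsto e^{Lt}$ is $C^1$, the product $t \mapsto e^{Lt} w(t)$ is absolutely continuous, hence equals the integral of its derivative; by the inequality above, $\frac{d}{dt}\big(e^{Lt} w(t)\big) = e^{Lt}\big(w'(t) + L w(t)\big) \geq 0$ for almost every $t \in (t_0,t_1)$. Consequently $t \mapsto e^{Lt} w(t)$ is nondecreasing on $[t_0,t_1]$, so $e^{Lt_1} w(t_1) \geq e^{Lt_0} w(t_0) > 0$, which contradicts $w(t_1) = 0$. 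This contradiction forces $w > 0$ on all of $[t_0,\beta]$, as claimed.

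The only genuinely delicate point is the passage from the almost-everywhere differential inequality to the integrated monotonicity statement: this is exactly where absolute continuity of $w$ (and hence of $e^{Lt}w$) is used, so that the fundamental theorem of calculus applies even though the derivative is controlled only off a null set. The restriction to the compact subinterval $[t_0,t_1]$ is what makes a single Lipschitz constant $L$ available, and the sign bookkeeping $w = |w|$ on $[t_0,t_1)$ is what converts the Lipschitz bound into the one-sided estimate $w' \geq -Lw$ required for the Gronwall argument.
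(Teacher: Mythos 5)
Your proof is correct and follows essentially the same route as the paper's: locate the first zero $t_1$ of $w = u^+ - u^-$ after $t_0$, use compactness of the ranges of $u^\pm$ to obtain a single Lipschitz constant, derive $w' \geq -Lw$ almost everywhere on $(t_0,t_1)$, and integrate to contradict $w(t_1)=0$. The only cosmetic difference is that you integrate via the factor $e^{Lt}$ (which neatly sidesteps dividing by $w$), whereas the paper integrates $d'/d \geq -L$ directly; both are the same Gronwall step.
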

 
\begin{proof}
We may of course assume that $t_0\in [\a,\b)$. 

Put $d(t) := u^+(t)-u^-(t)$ for $t \in [\alpha, \beta]$. Suppose that 
$A:= \{ t \in (t_0, \beta] : d(t)\leq 0\}$
is nonempty. Then, by continuity, $t_1:= \inf A > t_0$. Moreover, $d(t) >0$ on $[t_0, t_1)$ 
and $d(t_1) = 0$.

Let $K = K^+\cup K^-$ with $K^\pm := \{ u^\pm (t):\, t \in [\alpha, \beta]\}$
and denote by $L$ the corresponding 
Lipschitz constant from the hypothesis. For almost all $s \in (t_0, t_1)$ we have
\[
\begin{aligned} 
d'(s) &= \frac{d}{ds} (u^+(s) - u^-(s))
 \geq f(s, u^+(s) ) - f(s, u^-(s))\\
& \geq -L|u^+(s) - u^-(s)| 
= -Ld(s)
\end{aligned}
\]
since $s<t_1$. 
It follows that $\frac{d'}{d} \geq -L$ almost everywhere on $(t_0,t_1)$ and hence, by integration, 
$d(t) \geq d(t_0)e^{-L(t-t_0)}$ for all $t \in (t_0, t_1)$. 
By continuity, $d(t_1) \geq d(t_0)e^{-L(t_1-t_0)}>0$, which contradicts $d(t_1)=0$. 
Hence we must have $A=\emptyset$ and thus $u^+(t) > u^-(t)$ for all $t \in (t_0, \beta]$ as claimed. 
\end{proof}

\begin{cor}\label{c.comp}
Let $f$ and $u^+, u^-$ be as in Lemma \ref{l.comp} but assume now that $u^+(t_0) \leq u^-(t_0)$ for 
some $t_0 \in [\alpha, \beta]$. Then $u^+(t) \leq u^-(t)$ for all $t \in [\alpha, t_0]$.
\end{cor}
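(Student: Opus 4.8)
The plan is to deduce this directly from Lemma \ref{l.comp} by a contradiction argument, exploiting the fact that the conclusion of that lemma propagates the strict inequality \emph{forward} in time from any point at which it holds. Concretely, I would suppose for contradiction that the assertion fails, i.e.\ that there exists some $t_1 \in [\alpha, t_0]$ with $u^+(t_1) > u^-(t_1)$.

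Since $t_1 \leq t_0 \leq \beta$, the point $t_1$ lies in $[\alpha,\beta]$, and the hypotheses of Lemma \ref{l.comp} (continuity and uniform local Lipschitz continuity of $f$, absolute continuity of $u^\pm$, and the two differential inequalities) remain in force on all of $[\alpha,\beta]$. Applying that lemma with $t_1$ playing the role of its $t_0$, I obtain $u^+(t) > u^-(t)$ for every $t \in [t_1,\beta]$. The key observation is then simply that $t_0$ belongs to this interval, because $t_1 \leq t_0 \leq \beta$; hence $u^+(t_0) > u^-(t_0)$. This contradicts the standing hypothesis $u^+(t_0) \leq u^-(t_0)$. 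Therefore no such $t_1$ can exist, and $u^+(t) \leq u^-(t)$ holds for all $t \in [\alpha,t_0]$, as claimed.

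As for difficulty, there is essentially no obstacle here: the corollary is precisely the contrapositive of Lemma \ref{l.comp}, read on the subinterval $[\alpha,t_0]$, so the whole argument is a one-line application once the contradiction is set up correctly. The only point demanding a moment's care is verifying that the target time $t_0$ lies in the range $[t_1,\beta]$ on which Lemma \ref{l.comp} delivers its conclusion, which is immediate from $t_1 \leq t_0 \leq \beta$.

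I would explicitly avoid the alternative, more laborious route of reversing time, for instance by setting $\tilde f(\tau,x) := -f(\alpha+t_0-\tau,x)$ and $w^\pm(\tau) := u^\mp(\alpha+t_0-\tau)$. One checks without difficulty that $w^+,w^-$ satisfy the differential inequalities required by Lemma \ref{l.comp} for $\tilde f$ on $[\alpha,t_0]$, but this reformulation only yields the \emph{non-strict} initial inequality $w^+(\alpha) \geq w^-(\alpha)$, which the lemma as stated does not handle. The contradiction argument sidesteps this strictness issue entirely, and it is the route I would take.
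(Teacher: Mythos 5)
Your argument is exactly the paper's: assume $u^+(t_1)>u^-(t_1)$ for some $t_1\in[\alpha,t_0]$, apply Lemma \ref{l.comp} forward from $t_1$ to conclude $u^+(t_0)>u^-(t_0)$, and contradict the hypothesis. The proposal is correct and coincides with the paper's proof.
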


\begin{proof}
If $u^+(t_1) >u^-(t_1)$ for some $t_1 \in [\alpha, t_0)$, then Lemma \ref{l.comp} would imply that 
$u(t_0) > u^-(t_0)$. 
\end{proof}

The next lemma should be compared with Lemma \ref{l.bg}.

\begin{lem}\label{l.dissbound}
Let $A$ be the generator of a strongly continuous contraction semigroup $\bS$ on $B$ 
and let $F: [0,T]\times B\to B$ satisfy conditions (F$'$) and (F$''$).
If $u, v \in C([0,T]; B)$ satisfy 
\[ u(t) = \int_0^t S(t-s)F(s, u(s) + v(s))\, ds,\]
for all $t\in [0,T]$, then
$$\sup_{t \in [0,T]}\|u(t)\| \leq \big(\frac{4a''}{b''}\big)^{\frac{1}{m}}
\Big(1+\sup_{t \in [0,T]}\|v(t)\|\Big).$$
\end{lem}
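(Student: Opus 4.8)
The plan is to regularise $u$ so that $t\mapsto\|u(t)\|$ becomes amenable to the differential-inequality calculus developed at the start of this section, and then to run a scalar comparison argument against an autonomous ODE whose unique positive equilibrium is (up to the crude constant) the asserted bound. The regularisation step mirrors the proof of Lemma~\ref{l.bg}. First I would set $u_n(t):=nR(n,A)u(t)$; applying $nR(n,A)$ under the convolution integral and arguing exactly as in Lemma~\ref{l.bg}, the function $u_n$ is continuously differentiable with
\[ u_n'(t) = Au_n(t) + F(t,u_n(t)+v(t)) + r_n(t), \]
where $r_n(t) = nR(n,A)F(t,u(t)+v(t)) - F(t,u_n(t)+v(t))$. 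Since $u\in C([0,T];B)$ has compact range and $nR(n,A)\to I$ strongly, $u_n\to u$ uniformly on $[0,T]$; combined with the local Lipschitz continuity of $F$ from (F$'$), this yields $\eta_n:=\sup_{t\in[0,T]}\|r_n(t)\|\to 0$.

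Next, because $u_n$ is differentiable, the preliminary observations of this section show $t\mapsto\|u_n(t)\|$ is absolutely continuous and at a.e.\ $t$ equals $\langle u_n'(t),x^*\rangle$ for every $x^*\in\partial\|u_n(t)\|$. For such $x^*$, contractivity of $\bS$ gives $\langle Au_n(t),x^*\rangle\le 0$: indeed $\langle S(h)u_n(t),x^*\rangle\le\|u_n(t)\|=\langle u_n(t),x^*\rangle$, so dividing $\langle S(h)u_n(t)-u_n(t),x^*\rangle\le 0$ by $h$ and letting $h\downarrow 0$ gives the claim. Splitting $F(t,u_n(t)+v(t)) = [F(t,v(t)+u_n(t))-F(t,v(t))]+F(t,v(t))$ and applying the two estimates in (F$''$) with $x=u_n(t)$, $y=v(t)$ gives $\langle F(t,u_n(t)+v(t)),x^*\rangle\le 2a''(1+\|v(t)\|)^m-b''\|u_n(t)\|^m$. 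Hence, for a.e.\ $t$,
\[ \frac{d}{dt}\|u_n(t)\| \le 2a''(1+\|v(t)\|)^m - b''\|u_n(t)\|^m + \|r_n(t)\|. \]

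Then I would set $V:=\sup_{t\in[0,T]}\|v(t)\|$ and $f_n(x):=2a''(1+V)^m+\eta_n-b''x^m$, a locally Lipschitz (after a harmless extension of $x\mapsto x^m$ to $x\le 0$) autonomous right-hand side with positive zero $C_n:=((2a''(1+V)^m+\eta_n)/b'')^{1/m}$. The displayed inequality makes $\|u_n(\cdot)\|$ a subsolution of $x'=f_n(x)$, while any constant exceeding $C_n$ is a strict supersolution; since $u(0)=0$ forces $\|u_n(0)\|=0$, Lemma~\ref{l.comp} gives $\|u_n(t)\|<C_n+\eps$ for every $\eps>0$ and all $t$, whence $\|u_n(t)\|\le C_n$. (Equivalently: once $\|u_n\|^m$ exceeds the equilibrium the right-hand side is negative, so $\|u_n\|$ cannot cross $C_n$ from below.) Letting $n\to\infty$ with $\|u_n(t)\|\to\|u(t)\|$ and $\eta_n\to 0$ yields $\|u(t)\|\le (2a''/b'')^{1/m}(1+V)$ for all $t$, which is dominated by the stated bound $(4a''/b'')^{1/m}(1+V)$.

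The only genuinely delicate point is the regularisation: a mild solution of a convolution equation need not be differentiable, yet the entire differential-inequality machinery of this section presupposes differentiability of $t\mapsto u_n(t)$. The Yosida approximation $nR(n,A)$ supplies this at the price of the remainder $r_n$, and the crux is to verify that $r_n\to 0$ uniformly so that it disappears in the limiting equilibrium $C_n\to (2a''/b'')^{1/m}(1+V)$. Everything else is a routine combination of the dissipativity of $A$ in $B$, the structural inequality (F$''$), and the scalar comparison principle of Lemma~\ref{l.comp}.
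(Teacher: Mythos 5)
Your overall strategy coincides with the paper's (regularise so that the differential calculus for $t\mapsto\|u(t)\|$ applies, derive the inequality $\frac{d}{dt}\|u_n(t)\|\le 2a''(1+\|v(t)\|)^m-b''\|u_n(t)\|^m$ from dissipativity and (F$''$), then close with a scalar comparison), but both individual steps are executed differently. For the regularisation, the paper does not set $u_n=nR(n,A)u$ as in Lemma \ref{l.bg}; it first proves the bound assuming $A$ is \emph{bounded} (so that $u$ itself is a classical solution and no remainder appears), and then replaces $A$ by its Yosida approximants $A_n=nAR(n,A)$, re-solves the integral equation with the semigroups generated by $A_n$ to get exact solutions $u_n$, and passes to the limit $u_n\to u$ in $C([0,T];B)$. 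Your route keeps the original semigroup and pays with the remainder $r_n$; the paper's route avoids $r_n$ entirely but must instead justify $u_n\to u$. For the comparison, you use a constant strict supersolution $C_n+\eps$ together with the forward comparison in Lemma \ref{l.comp}, whereas the paper runs a contradiction argument against the exact maximal solution $\psi$ of $\psi'=-b''\psi^m+\gamma^m$ via the backward Corollary \ref{c.comp} and a uniqueness/equilibrium argument; your version is arguably cleaner and even yields the slightly better constant $(2a''/b'')^{1/m}$.

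The one point that needs repair is the claim $\eta_n:=\sup_{t\in[0,T]}\|r_n(t)\|\to 0$. Decomposing $r_n(t)=[nR(n,A)F(t,w(t))-F(t,w(t))]+[F(t,w(t))-F(t,w_n(t))]$ with $w=u+v$, $w_n=u_n+v$, the second bracket does tend to $0$ uniformly by (F$'$) and $u_n\to u$ uniformly. But the first bracket converges to $0$ uniformly in $t$ only if the set $\{F(t,w(t)):t\in[0,T]\}$ is relatively compact in $B$, and (F$'$) gives only strong measurability, not continuity, of $t\mapsto F(t,x)$; so compactness of the range of $u+v$ does not transfer to the range of $F(\cdot,w(\cdot))$. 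What you do get, by dominated convergence (the bracket is pointwise null and dominated by $2\sup_t\|F(t,w(t))\|<\infty$), is $\int_0^T\|r_n(t)\|\,dt\to 0$, which is exactly what Lemma \ref{l.bg} uses. This suffices here too: replace the constant supersolution by $u^+(t)=\big((2a''/b'')^{1/m}(1+V)+\eps\big)+\int_0^t\|r_n(s)\|\,ds$, which satisfies $u^+{}'(t)=\|r_n(t)\|\ge 2a''(1+V)^m+\|r_n(t)\|-b''(u^+(t))^m$, apply Lemma \ref{l.comp} as before, and let $n\to\infty$ and $\eps\downarrow 0$. With that adjustment the argument is complete.
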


\begin{proof}
To simplify notations we write $a=a''$ and $b=b''$, where 
$a''$, $b''$ are as in (F$''$).

{\em Step 1} -- First we assume that $A$ is bounded. Then $u$ is continuously differentiable and 
\[ u'(t) = Au(t) + F(t, u(t)+v(t)), \quad t \in [0,T]. \]
By the remarks at the beginning of the section, for almost all $t\in (0,T)$ we have,
for all $x^* \in \partial\|u(t)\|$,
\[
 \begin{aligned}
  \frac{d}{dt}\|u(t)\| & = \lb Au(t), x^*\rb + \lb F(t,u(t)+v(t)) -F(t, v(t)),x^*\rb + \lb F(t, v(t)),x^*\rb\\
& \leq  0 + 2a(1+\|v(t)\|)^m  -b\|u(t)\|^m\,  
\\ & \leq  2a\Big(1+ \sup_{s \in [0,T]}\|v(s)\|\Big)^m -b\|u(t)\|^m \, .
 \end{aligned}
\]
In the second estimate we have used the dissipativity of $A$ and our assumptions.

Setting $\varphi (t) := \|u(t)\|$ and 
$\gamma := (2a)^{\frac{1}{m}}(1+ \sup_{s \in [0,T]}\|v(s)\|)$, it follows 
that $\varphi$ is absolutely continuous with 
\[ \varphi'(t) \leq -b\varphi (t)^m + \gamma^m \]
almost everywhere. We have to prove that $\varphi (t) \leq \big(\frac{2}{b}\big)^\frac{1}{m}\gamma$ 
for all 
$t \in [0,T]$. Assume to the contrary that $\varphi (t_0) 
> \big( \frac{2}{b}\big)^\frac{1}{m}\gamma$ for 
some $t_0 \in [0,T]$. Clearly $\varphi(0)=0$, so $t_0\in (0,T]$. 
Let $\psi : I \to \CR$ be the unique maximal solution of 
\[ \left\{ 
\begin{aligned}
            \psi'(t) & =  -b\psi(t)^m + \gamma^m,\\
	    \psi(t_0) & =  \varphi(t_0).
           \end{aligned}
 \right. 
\] 
By Corollary \ref{c.comp}, $\psi (t) \leq \varphi (t)$ for all $t\in I\cap [0,t_0]$.

We claim that $\psi (t) > (\frac1{b})^{\frac{1}{m}}\gamma$ for all $t \in I\cap [0,t_0]$. 
If the claim was false, noting that $\psi(t_0) = \varphi(t_0) > \big( \frac{2}{b}\big)^\frac{1}{m}\gamma$,
we would have $\psi (t_1) = \big( \frac{1}{b}\big)^\frac{1}{m}\gamma$ for some $t_1 \in I\cap [0,t_0]$.
By uniqueness, this would 
imply that $\psi \equiv (\frac1{b})^{\frac{1}{m}}\gamma$, a contradiction to 
$\psi (t_0) > (\frac1{b})^{\frac{1}{m}}\gamma$. This proves the claim.

We have proved that $\big( \frac{1}{b}\big)^\frac{1}{m}\gamma  < \psi\le \varphi$ on $I\cap [0,t_0]$. 
It follows that $0 \in I$ since otherwise $\psi$, and hence $\varphi$, would blow up
at some point in $[0, t_0)$. 

Consequently, $\big( \frac{1}{b}\big)^\frac{1}{m}\gamma  < \psi$ on $I\cap [0,t_0]$, 
which implies that $\psi'(t) < 0$ 
and hence that $\psi$ is decreasing. It follows that
\[ 0 = \varphi (0) \geq \psi (0) \geq \psi (t_0) = \varphi (t_0) 
 > \big( \frac{2}{b}\big)^\frac{1}{m}\gamma ,
\]
a contradiction.\medskip 

{\em Step 2} -- In order to remove the assumption that $A$ is bounded, we approximate $A$ with its 
Yosida approximands $A_n := nAR(n,A)= n^2R(n,A) -n$. We note that if $A$ is dissipative, then so are all $A_n$. 
We denote the (contraction) semigroup generated $A_n$ by $\bS_n$. Let $u_n$ be the unique 
fixed point in $C([0,T]; B)$ of 
\[ w \mapsto \Big[ t \mapsto \int_0^t S_n(t-s)F(s, w(s)+v(s))\, ds \Big]\, .\]
We note that, by the local Lipschitz assumption on $F$, there always exists a unique maximal solution 
of this equation. By Theorem \ref{t.dissex} with $G\equiv 0$ this solution is global.
Assumption (A3) is not needed for this part of the argument; cf. Remark \ref{rem:A3}.

By the above, 
\[ \sup_{t \in [0,T]}\|u_n(t)\| \leq \big(\frac{4a}{b}\big)^{\frac{1}{m}}
\Big(1+\sup_{t \in [0,T]}\|v(t)\|\Big)\]
for all $n \in \CN$. Since $u_n \to u$ in $C([0,T]; B)$, this gives the desired result.
\end{proof}

We can now extend Theorem \ref{t.dissex} assuming that $G$ is of linear growth.

\begin{thm}\label{t.dissex2} 
Assume (A1), (A4), (A5), (F$'$), (F$''$), (G$''$) and 
let $p>2$ satisfy $\theta +\kappa_G< \half - \frac{1}{p}$. \footnote{In the published version of the paper (J. Differential Equations 253 (2012), no. 3, 1036--1068), it was incorrectly stated that $p>2$ should satisfy $\theta +\kappa_G< \half - \frac{1}{Np}$. We thank Carlo Marinelli for kindly pointing this out.}
Then for all $\xi \in L^p(\Omega, \cF_0, \P; B)$ the maximal solution 
$(X (t))_{t \in [0, \sigma)}$ of \eqref{eq.scp} is global. Moreover, 
\[ \expect\|\bX\|_{C([0,T]; B)}^p \leq C (1+ \expect \|\xi\|^p),\]
where the constant $C$ depends on the coefficients only through 
the sectoriality constants of $A$ and the constants $a'',b'',c''$ and the exponent $N$.
\end{thm}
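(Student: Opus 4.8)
The plan is to mirror the proof of Theorem \ref{t.dissex}, the only structural change being that the a priori bound is now produced by Lemma \ref{l.dissbound} (which exploits the strong dissipativity (F$''$)) in place of Lemma \ref{l.bg}. As there, for $n\in\CN$ I would truncate $F$ by setting
\[ F_n(t,\omega,x) := F(t,\omega,x)\ \text{if } \|x\|\le n, \qquad F_n(t,\omega,x):= F\big(t,\omega,\tfrac{nx}{\|x\|}\big)\ \text{otherwise}, \]
so that each $F_n$ is globally Lipschitz and bounded, and set $\bX_n := \sol(A,F_n,G,\xi)$ with lifetime $\sigma_n$. Exactly as in Theorem \ref{t.main}, in the Banach-space formulation of Subsection \ref{sec:Banach}, conditions (a) and (b) preceding Theorem \ref{t.stopconv} hold: (a) by maximality and (b) by Proposition \ref{p.banachlip}. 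Hence, once I show $\sigma_n=T$ almost surely together with the uniform bound $\sup_n\expect\|\bX_n\|_{C([0,T];B)}^p\le C(1+\expect\|\xi\|^p)$, Corollary \ref{c.hoelderconv} (which generalises \emph{mutatis mutandis} to the present setting) yields $\sigma_\infty=\sigma=T$ almost surely and, through its Fatou argument, the asserted estimate for $\bX=\bX_\infty$. Everything therefore reduces to the uniform a priori bound.

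To obtain it, fix $n$ and decompose $\bX_n = u_n + v_n$ with $u_n := \bS\ast F_n(\cdot,\bX_n)$ and $v_n := S(\cdot)\xi + \bS\diamond G(\cdot,\bX_n)$, so that $u_n(t)=\int_0^t S(t-s)F_n(s,u_n(s)+v_n(s))\,ds$. The point is that $F_n$ still satisfies (F$'$) and (F$''$): the local Lipschitz and growth bounds are immediate, while the dissipativity inequality of (F$''$) for $F_n$ follows from that for $F$ by the same rescaling computation used to derive \eqref{eq.dissipative}, using the dissipativity of $A|_B$ and $\|\tfrac{nx}{\|x\|}\|=n$. Granting this, Lemma \ref{l.dissbound} applies and gives the pathwise bound
\[ \sup_{t\in[0,T]}\|u_n(t)\| \le \big(\tfrac{4a''}{b''}\big)^{1/m}\Big(1+\sup_{t\in[0,T]}\|v_n(t)\|\Big). \]
This is the crucial gain over Theorem \ref{t.dissex}: whereas Lemma \ref{l.bg} controlled $u_n$ by the $N$-th power of $\sup\|v_n\|$, the damping term $-b''\|x\|^m$ in (F$''$) yields a bound that is merely \emph{linear} in $\sup\|v_n\|$, and it is precisely this linearity that accommodates a nonlinearity $G$ of linear growth. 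Since $\bS^B$ is contractive we have $\|S(\cdot)\xi\|_{C([0,T];B)}\le\|\xi\|$, so combining with $\bX_n=u_n+v_n$ gives the pathwise estimate $\|\bX_n\|_{C([0,T];B)}\lesssim 1+\|\xi\|+\|\bS\diamond G(\cdot,\bX_n)\|_{C([0,T];B)}$.

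Taking $p$-th moments, it remains to absorb the stochastic convolution. Using the embedding $E_\theta\hookrightarrow B$ and \cite[Proposition 4.2]{vNVW08} with an exponent $\alpha\in(0,\tfrac12)$ chosen compatibly with the standing condition on $\theta+\kappa_G$, followed by the factorisation and Young step of Theorem \ref{t.dissex} and the linear growth in (G$''$), I would estimate
\[ \expect\|\bS\diamond G(\cdot,\bX_n)\|_{C([0,t];E_\theta)}^{p} \lesssim t^{\beta}\int_0^t\big(1+\expect\|\bX_n\|_{C([0,s];B)}^{p}\big)\,ds \]
for some $\beta>0$, with constants independent of $n$ and of $\xi$. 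Writing $\phi_n(t):=\expect\|\bX_n\|_{C([0,t];B)}^p$ (taken up to the lifetime), the two displays combine into $\phi_n(t)\lesssim 1+\expect\|\xi\|^p + t^\beta\int_0^t(1+\phi_n(s))\,ds$, and Gronwall's lemma, equivalently a small-$T$ absorption followed by iteration as in Theorem \ref{t.dissex}, gives $\sup_n\phi_n(T)\le C(1+\expect\|\xi\|^p)<\infty$. Since this bound holds up to $\sigma_n$ and is finite, $\bX_n$ cannot explode, whence $\sigma_n=T$ almost surely, and the reduction of the first paragraph closes the argument.

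The main obstacle is the closing of the a priori estimate, i.e. arranging that the moment to which the stochastic convolution is estimated matches the moment at which Lemma \ref{l.dissbound} is applied, so that $\phi_n$ reappears on the right to the \emph{first} power rather than to a higher one; this is exactly what the linearity of Lemma \ref{l.dissbound} secures and what would fail under the merely polynomial control of Lemma \ref{l.bg}. A secondary point requiring care is the verification that the truncation $F_n$ inherits (F$''$) with constants independent of $n$, since the damping exponent $m$ must survive the rescaling; this is where the dissipativity of $A|_B$ enters, in the same manner as in the derivation of \eqref{eq.dissipative}.
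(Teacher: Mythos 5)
Your overall architecture (truncate one coefficient, get global approximants, prove a uniform $p$-th moment bound via Lemma \ref{l.dissbound}, close with Corollary \ref{c.hoelderconv}) matches the paper's, and your observation that the linearity of Lemma \ref{l.dissbound} in $\sup\|v\|$ is what accommodates a $G$ of linear growth is exactly right. But you truncate the wrong coefficient, and this creates a genuine gap. The paper truncates $G$: since $G$ has linear growth, each $G_n$ is \emph{bounded}, hence $(A,F,G_n)$ satisfies (F$'$) and (G$'$) with the \emph{untruncated} $F$, so Theorem \ref{t.dissex} already gives global existence of $\bX_n=\sol(A,F,G_n,\xi)$, and Lemma \ref{l.dissbound} is then applied to the genuine $F$, for which (F$''$) holds as stated.

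Your truncation of $F$ fails at the point you yourself flag as "a secondary point requiring care": the claim that $F_n$ inherits (F$''$) by the rescaling computation behind \eqref{eq.dissipative}. That computation works for (F$'$) because the right-hand side there is an \emph{upper} bound $a'(1+\|y\|)^N+b'\|x\|$, and replacing $(x,y)$ by $\big(\tfrac{nx}{\|x+y\|},\tfrac{ny}{\|x+y\|}\big)$ with $\|x+y\|>n$ only shrinks both arguments, which helps. In (F$''$) the right-hand side contains the \emph{negative} damping term $-b''\|x\|^m$, and the same rescaling replaces it by $-b''\big(\tfrac{n\|x\|}{\|x+y\|}\big)^m$, which saturates at $-b''n^m$ when $\|x\|\sim\|x+y\|$ is large. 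So $F_n$ does not satisfy (F$''$) with constants independent of $n$ (indeed the damping is no longer coercive in $\|x\|$ at all), and the conclusion of Lemma \ref{l.dissbound} --- the bound $\sup\|u_n\|\le(4a''/b'')^{1/m}(1+\sup\|v_n\|)$ uniformly in $n$ --- is unavailable; on the event where $\sup\|v_n\|$ is large relative to $n$ one only controls $\|u_n\|$ through the sup bound $a''(1+n)^m$ of $F_n$, which blows up with $n$. A secondary consequence of the same choice is that your approximating problems $\sol(A,F_n,G,\xi)$ still carry a merely locally Lipschitz $G$, so their global existence is not covered by any earlier result and was to be extracted from the very a priori bound that fails. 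Both problems disappear if you truncate $G$ instead, as in the paper.
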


\begin{proof}
For $n \in \CN$ we put
 \[ G_n(t,\omega, x) := \left\{\begin{array}{lcl}
                               G(t,\omega, x), & \|x\|\leq n\\
			       G\big(t,\omega, \frac{nx}{\|x\|}\big), & \mbox{otherwise} .
                              \end{array}
 \right.
\]
Since $G$ is of linear growth, $G_n$ is bounded. In particular, 
$A, F$ and $G_n$ satisfy the
Hypotheses (F$'$) and (G$'$) uniformly with respect to $n$. Hence, by Theorem \ref{t.dissex}, $\bX_n := \sol (A, F, G_n, \xi )$ exists 
globally.

Proceeding as in the proof of Theorem \ref{t.dissex}, we have
\begin{equation}\label{eq.sum}
 \expect\|X_n\|_{C([0,T]; B)}^p \lesssim \expect \|\xi\|^p + \expect \|\bS\ast F(\cdot , \bX_n)\|_{C([0,T];B)}^p
+ \expect \|\bS\diamond G_n(\cdot, \bX_n)\|_{C([0,T]; B)}^p\, .
\end{equation}
Using Lemma \ref{l.dissbound} with 
\[ u_n = \bX_n - S(\cdot )\xi - \bS\diamond G(\cdot, \bX_n)\quad\mbox{and}\quad
 v_n = S(\cdot )\xi + \bS\diamond G(\cdot, \bX_n)
\]
we obtain 
\[
 \expect \|\bS\ast F(\cdot , \bX_n)\|_{C([0,T];B)}^p
\lesssim  \big(\frac{4a''}{b''}\big)^\frac{1}{m}(1 + \expect\|\xi\|^p + \expect\|\bS\diamond G(\cdot , \bX_n)\|_{C([0,T];B)}^p)
\, . 
\]
Moreover, a computation similar as in the proof of Theorem \ref{t.dissex} yields
\[ 
 \expect \|\bS\diamond G_n(\cdot, \bX_n)\|_{C([0,T]; B)}^p \leq 
C(T)\big( \alpha + \beta \expect\|\bX_n\|_{C([0,T];B)}^p\big)
\]
where $C(T)\to 0$ as $T\to 0$ and $\alpha, \beta$ only depend on the constants in the linear growth assumption on 
$G$. Substituting this back into \eqref{eq.sum}, it follows that
\[ 
  \expect\|X_n\|_{C([0,T]; B)}^p \leq C_0 + C_1\expect \|\xi\|^p + C_2(T)  \expect\|X_n\|_{C([0,T]; B)}^p
\]
and the proof can be finished as that of Theorem \ref{t.dissex} 
\end{proof}

In combination with our earlier results, it can be seen that the solution $\bX$ in Theorem \ref{t.dissex2}
depends continuously on the data $A$, $F$, $G$, and $\xi$ in the sense discussed in Section \ref{sect.2}.
We leave the precise statement of this result to the reader.

%

\section{1D Reaction diffusion equations with space-time white noise}\label{sect.4}

In this section, we apply our results to stochastic reaction diffusion equations in dimension 1 with multiplicative 
space-time white noise. The restriction to the 1 dimensional situation is made so that space-time white noise 
can be considered.
However, it is possible to extend our results also to higher space dimensions, provided that (i) white noise 
is replaced with either coloured or finite-dimensional noise (ii) appropriate regularity assumptions are made on 
the domain. For ease of notation, we will also only consider coefficients $f$ and $g$ which do not depend on $\omega$.

We consider the stochastic partial differential equation
\begin{equation}\label{eq.spde}
\left\{ 
\begin{aligned}
\frac{\partial}{\partial t} u(t,x) & =  \A u (t,x) + f(t,x, u(t,x)) + g(t,x,u(t,x))\frac{\partial w}{\partial t}(t,x),\\
u(0,x) & =  \xi (x),
\end{aligned}
\right. 
\end{equation}
for $(t,x)\in [0,T]\times [0,1] $.
We complement \eqref{eq.spde} with either Dirichlet or Neumann boundary conditions.
Here, $\A$ is a second order elliptic operator, formally given by
\[ \A u = (au')' + bu' +c \]
where the coefficients $a,b,c$ are real-valued and belong to $L^\infty (0,1)$, and 
$$\inf_{x \in (0,1)}a(x) > 0.$$
 The nonlinearity $f: [0,T]\times 
[0,1]\times \CR\to \R$ is as in Example \ref{ex.f} and $g: [0,T]\times [0,1]\times \CR\to \R$ is locally Lipschitz continuous 
and of linear growth in the third variable, uniformly with respect to the first two variables. Finally, 
$w$ is a space-time white noise on $[0,1]$.

Let us rewrite equation \eqref{eq.spde} in our general abstract framework. We set 
$$H = L^2(0,1), \quad E = L^q(0,1)$$
with $2\le q<\infty$. Then $E$ is a UMD Banach space with type 2.
On $H$ we consider the closed sectorial form
\[ \mathfrak{a}[u,v] := \int_0^t a(x)u'(x)\overline{v'(x)} - b(x)u'(x)\overline{v(x)} - c(x)u(x)\overline{v(x)}\, dx \]
defined on the domain
$ \mathsf{D}(\mathfrak{a}) := H^1_0(0,1)$ or $\mathsf{D}(\mathfrak{a}):= H^1(0,1)$, depending on whether 
we impose Dirichlet or Neumann boundary conditions. 
The operator $(A, \mathsf{D}(A))$ is defined as usual by
defining $ \mathsf{D}(A)$ as the subspace of all 
$u \in \mathsf{D}(\mathfrak{a})$ such that 
there exists a unique element $w \in H$ such that 
\[ -[w,v]_H = \mathfrak{a}[u,v]\quad \forall\, v \in D(\mathfrak{a}). \]
We then set $Au := w$ for $u\in  \mathsf{D}(A)$.
It is well known (see \cite{ouhabaz}) that $A$ generates a strongly continuous, 
analytic semigroup $\bS$ on $L^2(0,1)$. Moreover, 
$\bS$ is positive and, replacing $\mathfrak{a}$ with $\mathfrak{a} - c_0$ for some 
$c_0 \ge 0$ if necessary, 
we may assume that $\bS$ is sub-Markovian. In particular, $S$ extends to a 
strongly continuous semigroup $S^E$ on $E = L^q(0,1)$, and, as a consequence of 
the Stein interpolation theorem (see \cite[Proposition 3.12]{ouhabaz}), 
this semigroup is analytic on $E$.
By rescaling we may assume that $\bS^E$ 
is uniformly exponentially stable on $E$ (we may replace $A^E$ and $f$ with $A^E-c_0$ and 
$f+c_0$).

Since $\mathsf{D}(A) \subseteq \mathsf{D}(\mathfrak{a}) \subseteq C([0,1])$ and 
$S(t)f \in \mathsf{D}(A)$ for all $f \in E$ and $t>0$ by the analyticity of $\bS$, it follows that $C([0,1])$ is invariant under $\bS$. Moreover,
if $B$ denotes the closure of $D(\mathfrak{a})$ in $C([0,1])$ 
(i.e., $B= C_0([0,1])$ in the case of Dirichlet 
boundary conditions and $B= C([0,1])$ in the case of Neumann boundary conditions) then $\bS$ 
restricts to a strongly 
continuous contraction semigroup $\bS^B$ on $B$.

The next lemma identifies the associated fractional domain spaces.

\begin{lem}\label{l.fracdom}
The fractional domain spaces associated to $A^E$ are given, for $0<\theta < \frac12$ by 
$E_\theta = W^{2\theta, q}_0(0,1)$ and $E_\theta = W^{2\theta, q}(0,1)$ depending on whether 
the boundary conditions are Dirichlet or Neumann. 
\end{lem}

\begin{proof}
By \cite{daners00}, the semigroup associated to $A^E$ satisfies Gaussian upper bounds. 
Hence, by the 
results of \cite{dr86}, (a suitable translate of) $A^E$ has a bounded $H^\infty$-calculus and 
thus, in 
particular, bounded imaginary powers. Therefore, see \cite[Theorem 6.6.9]{haase}, 
we have $\mathsf{D}((w'-A)^\theta) = [E, \mathsf{D}(A)]_\theta$. 
 
Next observe that by \cite[Theorem 7.1]{amn97} the fractional domain space $E_{\half}$ is given as 
$\mathsf{D}(\mathfrak{a})\cap W^{1,q}(0,1)$, i.e. $W^{1,q}_0(0,1)$ or $W^{1,q}(0,1)$, depending 
on the boundary conditions. Employing \cite[Theorem 6.6.9]{haase} a second time, it follows that
for $\theta\in (0,\half)$ we have
$\mathsf{D}((w'-A)^\theta ) = [E, \mathsf{D}(w'-A)^\half)]_{2\theta}$.
\end{proof}

In either case, the Sobolev embedding implies that $E_\theta$ is
continuously embedded in $B$ for $2\theta > \frac{1}{q}$. Hence the 
conditions (A1), (A4) and (A5) are satisfied 
for $\theta \in (\frac{1}{2q}, \frac{1}{2})$.

The nonlinearity $F$ is modeled as in the previous section, where it was seen  
that (F$'$) and (F$''$) hold. Concerning $G$, we first pick 
$\kappa_G \in (\frac{1}{4}, \frac{1}{2})$. Following \cite[Section 10.2]{vNVW08}
we define the multiplication operator 
$\Gamma : [0,T]\times B \to \cL (H)$ by
\[ [\Gamma (t,u)h](s) := g(t,s, u(s))h(s), \quad s\in [0,1], \]
and then define $G: [0,T]\times B \to \gamma (H, E_{-\kappa_G})$ by
\[ (-A)^{-\kappa_G} G(t,u)h := \iota (-A)^{-\kappa_G}\Gamma (t,u) h \, ,\]
where $\iota : H^{2\kappa_G}(0,1) \to L^q(0,1) = E$.
As in \cite[Section 10.2]{vNVW08} one sees that $G$ takes values
in $\gamma (H, E_{-\kappa_G})$ is locally Lipschitz continuous
as a map from $[0,T]\times B\to \gamma (H, E_{-\kappa_G})$.
It follows 
easily that $G$ satisfies assumption (G$''$).

From Theorem \ref{t.dissex2} we obtain:

\begin{thm}
Let $\frac{p}{4}>2k+1$, where $k$ is the exponent in the reaction term  \eqref{eq:reactionterm}. 
Under the assumptions above, for every $\xi \in L^p(\Omega, \cF_0, \P; B)$
the solution $\bX$ of equation \eqref{eq.spde} exists globally and belongs to 
$L^p(\Omega;C([0,T];B))$. 
\end{thm}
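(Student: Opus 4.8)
The plan is to realise \eqref{eq.spde} with $R=I$ as an instance of the abstract equation \eqref{eq.scp} to which Theorem \ref{t.dissex2} applies, and then to verify the hypotheses of that theorem for a suitable choice of the free parameters $q$, $\theta$ and $\kappa_G$. With $E=L^q(\OO)$ and $B=C(\overline\OO)$ we already know that (E) holds ($L^q$ is UMD of type $2$ for $q\geq 2$), that $A$ generates an analytic semigroup so that (A1) holds, and that after the normalisation rendering $\bS^B$ a contraction semigroup on $B$ (with $0\in\varrho(A)$, so that the negative fractional powers $(-A)^{-\kappa_G}$ are available) the standing assumptions of Section \ref{sect.3} are in force; by \eqref{eq.complexint} we may freely use $E_\theta=H^{2\theta,q}_{\{\B\}}(\OO)$ in place of the fractional domain spaces.

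First I would fix the spatial parameters. Choosing $q\in[2,\infty)$ and $\theta\in(0,\tfrac12)$ with $2\theta q>1=d$, the Sobolev embedding gives the continuous dense inclusion $E_\theta=H^{2\theta,q}_{\{\B\}}(\OO)\inject C(\overline\OO)=B$, which together with the trivial $B\inject L^q(\OO)=E$ yields (A5); and (A4) holds since $2q>d=1$. The reaction term $F$ is the Nemytskii map of $f$, and by Examples \ref{ex.f} and \ref{ex.f2} it satisfies (F$'$) and (F$''$), the former with the exponent $N=2k+1$ dictated by the degree of the polynomial \eqref{eq:reactionterm}. For the noise I would take $\kappa_G\in(\tfrac14,\tfrac12)$ and build $G$ from the multiplication operator $\Gamma$ as above; the restriction $\kappa_G>\tfrac14=\tfrac{d}{4}$ is exactly what lets the one-dimensional white-noise integral be defined (it makes $(-A)^{-\kappa_G}$ Hilbert--Schmidt on $L^2(\OO)$), and, arguing as in \cite[Section 10.2]{vNVW08}, $G$ then takes values in $\gamma(H,E_{-\kappa_G})$, is locally Lipschitz and of linear growth, so that (G$''$) holds. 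It is precisely the linear growth of $g$, hence of $G$, that makes Theorem \ref{t.dissex2} rather than Theorem \ref{t.dissex} the correct vehicle here.

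The remaining point is the exponent inequality $\theta+\kappa_G<\tfrac12-\tfrac1{Np}$ required by Theorem \ref{t.dissex2}, with $N=2k+1$. Here I would use the freedom in $q$: taking $q$ large keeps $\theta$ just above $\tfrac1{2q}$, hence arbitrarily small, while $\kappa_G$ may be chosen arbitrarily close to $\tfrac14$; thus $\theta+\kappa_G$ can be pushed arbitrarily close to $\tfrac14$, and the inequality is satisfiable as soon as $\tfrac14<\tfrac12-\tfrac1{(2k+1)p}$, that is, as soon as $(2k+1)p>4$. This is where the standing hypothesis $\tfrac{p}{4}>2k+1$ is consumed, and in fact it furnishes the inequality with room to spare. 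With (A1), (A4), (A5), (F$'$), (F$''$), (G$''$) and the exponent condition all in place, Theorem \ref{t.dissex2} yields that the maximal solution of \eqref{eq.spde} is global, $\sigma=T$ almost surely, and $\bX\in L^p(\Omega;C([0,T];B))$ with $\expect\|\bX\|_{C([0,T];B)}^p\leq C(1+\expect\|\xi\|^p)$.

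Since Theorem \ref{t.dissex2} carries the entire analytic burden, the new work is comparatively light, and I would expect the main obstacle to be the bookkeeping that makes the competing parameter constraints simultaneously satisfiable: the embedding $E_\theta\inject B$ forces $\theta$ up, the one-dimensional white noise pins $\kappa_G$ above $\tfrac14$, and the exponent condition of Theorem \ref{t.dissex2} forces $\theta+\kappa_G$ down. A secondary subtlety, rather than a genuine obstacle, is the verification that $G$ really maps into $\gamma(H,E_{-\kappa_G})$ with the asserted Lipschitz and growth properties, since this is the step that couples the $L^2$-cylindrical noise to the $L^q$-valued state space through the $\gamma$-radonifying inclusion $\iota$; this is provided by the construction of \cite[Section 10.2]{vNVW08}.
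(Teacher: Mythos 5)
Your proposal is correct and follows essentially the same route as the paper: both reduce \eqref{eq.spde} to Theorem \ref{t.dissex2} by taking $E=L^q(\OO)$ with $q$ large (so that $\theta$ can be taken just above $\tfrac{1}{2q}$ while retaining $E_\theta\inject B$), $\kappa_G$ just above $\tfrac14$ for the white-noise construction of $G$, and then checking $\theta+\kappa_G<\tfrac12-\tfrac1{Np}$ with $N=2k+1$. Your parenthetical observation that the hypothesis $\tfrac p4>2k+1$ leaves room to spare relative to the literal exponent condition is a fair reading of the statement as written; the paper's own one-line proof makes exactly the same parameter choices.
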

\begin{proof}
The condition $\frac{p}{4}>N$, with $N= 2k+1$, allows us to choose $2\le q<\infty$,
 $\theta\in [0,\frac12)$ and $\kappa_G\in (\frac14,\frac12)$ 
such that, with $E = L^q(0,1)$, we have 
$E_\theta \embed B$ and 
$0\le \theta+\kappa_G < \frac12 - \frac{1}{pN}$.
By the above discussion, the assumptions of Theorem \ref{t.dissex2} are then satisfied. 
\end{proof}

Let us end this article by discussing the dependence of the solution upon 
the coefficients $A, F$ and $G$. Suppose
for every $n \in \OCN$ we are given an operator $\A_n$, determined through its coefficients 
$a_n, b_n$ and $c_n$, and functions $f_n, g_n : [0,T] \times B \to B$. 
Let $A_n, F_n$ and $G_n$ be defined by replacing $\A, f$ and $g$ 
with $\A_n, f_n$ and $g_n$, respectively.

Then we have $F_n (t, u) \to F (t, u)$ in $B$, if $f_n (t, \cdot, \cdot ) \to f (t, \cdot, \cdot )$
for all $t \in [0,1]$, 
uniformly on compact subsets of $[0,1]\times \CR$.
This is a stronger assumption than 
in \cite{KvN10}, where only pointwise convergence was required.
 However, for reaction diffusion equations 
we need convergence in $C([0,1])$.

To infer convergence $G_n(t,u) \to G (t,u)$ for all $t \in [0,1], u \in B$, it is sufficient to have 
convergence $g_n(t,x,s) \to g (t,x,s)$ for all $(t,x,s) \in [0,T]\times [0,1]\times \CR$. Indeed, under 
this assumption we clearly have $\Gamma_n (t,u)h \to \Gamma (t,u) h$ in 
$L^2(0,1)$ for all $t \in [0,T]$ and $u \in B$. 
Hence, by `convergence by right multiplication', see \cite[Proposition 2.4]{vNVW07}, convergence of 
$G_n(t,u) \to G(t,u)$ in $\gamma (H, E_{-\kappa_G})$ follows.

Finally, let us address conditions (A1) -- (A3). If the coefficients $a_n, b_n$ and $c_n$ are uniformly bounded 
and elliptic then the forms $\mathfrak{a}_n$ are uniformly sectorial. This immediately yields (A1), cf.\ 
\cite[Lemma 5.1]{KvN10}. If we have $\lim_n a_n = a, \lim_n b_n = b$ and $\lim_n c_n
= c$ almost everywhere, then (A2) follows from the results of \cite{daners08}. 
Lemma \ref{l.fracdom} shows that the fractional domain spaces $E_{n,\theta}$ coincide as sets and the norms 
are mutually equivalent.
Inspecting  the proofs of the results used in the proof of Lemma \ref{l.fracdom}, the reader may check 
that, in fact, the uniformity assumptions yield that the constants may be chosen independently of $n$. 
This yields (A3).

\appendix 

\section{Convergence of analytic semigroups}

In this appendix we prove some convergence results for analytic semigroups 
under assumptions (A1) -- (A3). 
The lemmas \ref{l.fracconv} and \ref{l.Bconv}
may be known to specialists, but since we could not find these results in the literature 
we include proofs for reasons of completeness. 

The first lemma is used in the proof of Proposition \ref{prop:global}.

\begin{lem}\label{l.fracconv}
Assume (A1) -- (A3). Then
\begin{enumerate}
 \item For all $0\leq \theta < \half$ and $x \in E_\theta$ we have $S_n(t)x \to S_\infty (t)x$ in $E_\theta$, uniformly 
on compact time intervals in $[0, \infty )$.
 \item For all $0\le \theta, \kappa <\half $ and $x \in E_{-\kappa}$ we have 
$A_n S_n(t)x \to A_\infty S_\infty (t)x$ in $E_\theta$, uniformly on compact time intervals in $(0, \infty )$.
\item Let $\theta \in (0, \half )$ and $\lambda, \delta \geq 0$ satisfy $\lambda + \delta < \theta$. 
If $x_n \to x_\infty$ in $E_\theta$, then $S_n(\cdot )x_n \to S_\infty (\cdot )x_\infty$ in $C^\lambda ([0,T], E_\delta )$.
\end{enumerate}
\end{lem}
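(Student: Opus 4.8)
The plan is to reduce everything to two base-space facts and one new comparison (transference) result. Throughout I write $B_n := w'-A_n$ for some fixed $w'>w$, so that each $B_n$ is invertible and uniformly sectorial and $T_n(t):=e^{-w't}S_n(t)$ is the semigroup generated by $-B_n$. From (A1)--(A2) I would first record the standard Trotter--Kato input: strong resolvent convergence propagates through the common sector (the resolvents being uniformly bounded there by (A1)), so that $R(\lambda,A_n)y\to R(\lambda,A_\infty)y$ in $E$ for all $\lambda$ in that sector and $y\in E$. Feeding this into Dunford/Balakrishnan integrals together with dominated convergence yields $S_n(t)y\to S_\infty(t)y$ and $B_n^{-\alpha}y\to B_\infty^{-\alpha}y$ in $E$, uniformly on compact time sets. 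From (A3) I would record the uniform norm equivalences $\n\cdot\n_{E_{n,\alpha}}\simeq\n\cdot\n_{E_\alpha}$ for $|\alpha|<\half$ and the uniform analytic bounds $\n B_n^\sigma T_n(t)\n_{\cL(E)}\lesssim t^{-\sigma}$.

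The heart of the matter is the following \emph{transference} statement: for $0<\alpha<\half$, both $B_\infty^\alpha B_n^{-\alpha}\to I$ and $B_n^\alpha B_\infty^{-\alpha}\to I$ strongly on $E$. This is the one place where the resolvent (i.e.\ negative-power) convergence supplied by (A2) must be upgraded despite the fact that the domains $\Dom(A_n)$ vary with $n$, and I expect it to be the main obstacle. The plan is to prove the positive-power convergence $B_n^\alpha w\to B_\infty^\alpha w$ in $E$ for $w$ in the dense set $E_\beta$, $\alpha<\beta<\half$, via the Balakrishnan representation $B_n^\alpha w=\frac{\sin\pi\alpha}{\pi}\int_0^\infty s^{\alpha-1}B_n(s+B_n)^{-1}w\,ds$. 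For fixed $s$ the integrand converges to its $B_\infty$-analogue by strong resolvent convergence, and it is dominated uniformly in $n$ by $C\min\{s^{\alpha-1},s^{\alpha-\beta-1}\}\n w\n_{E_\beta}$: for small $s$ one uses $\n B_n(s+B_n)^{-1}\n\lesssim 1$, and for large $s$ one writes $B_n(s+B_n)^{-1}w=B_n^{1-\beta}(s+B_n)^{-1}B_n^\beta w$ and invokes $\n B_n^{1-\beta}(s+B_n)^{-1}\n\lesssim s^{-\beta}$ together with the uniform bound $\n B_n^\beta w\n\simeq\n w\n_{E_\beta}$ from (A3). Dominated convergence then gives the claim; since $B_\infty^\alpha B_n^{-\alpha}$ is uniformly bounded by (A3), a density argument upgrades this to $B_n^\alpha B_\infty^{-\alpha}\to I$ strongly, and one more application of (A3) yields the companion $B_\infty^\alpha B_n^{-\alpha}\to I$. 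Here the restriction $\alpha<\half$ in (A3) is exactly what makes the large-$s$ domination integrable.

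Granting the transference result, parts (1) and (2) follow. For (1), since $S_n$ is uniformly bounded on $E_\theta$ (again by (A3)) and $E_\beta$ is dense in $E_\theta$ for $\theta<\beta<\half$, a $3\eps$ reduction lets me take $x\in E_\beta$; then $B_\infty^\theta S_n(t)x=(B_\infty^\theta B_n^{-\theta})S_n(t)(B_n^\theta x)$, where $B_n^\theta x\to B_\infty^\theta x$ in $E$ by positive-power convergence, so $S_n(t)(B_n^\theta x)\to S_\infty(t)(B_\infty^\theta x)$ in $E$ uniformly on $[0,T]$. Since a uniformly bounded strongly convergent family converges uniformly on the compact range of the limit, multiplying by $B_\infty^\theta B_n^{-\theta}$ preserves this and gives $B_\infty^\theta S_n(t)x\to B_\infty^\theta S_\infty(t)x$ uniformly, i.e.\ convergence in $E_\theta$. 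For (2), set $v_n:=B_n^{-\kappa}x\in E$ (uniformly bounded, with $v_n\to B_\infty^{-\kappa}x$ by negative-power convergence) and $Q_n(t):=B_n^\theta A_nS_n(t)B_n^\kappa=e^{w't}(w'B_n^{\theta+\kappa}-B_n^{1+\theta+\kappa})T_n(t)$, so that $B_n^\theta A_nS_n(t)x=Q_n(t)v_n$. The Dunford-integral argument of the first paragraph shows $Q_n(t)\to Q_\infty(t)$ strongly, uniformly on compact subsets of $(0,\infty)$, hence $Q_n(t)v_n\to Q_\infty(t)v_\infty$ in $E$; inserting $B_\infty^\theta B_n^{-\theta}\to I$ as in part (1) converts this into $B_\infty^\theta A_nS_n(t)x\to B_\infty^\theta A_\infty S_\infty(t)x$, the asserted convergence in $E_\theta$.

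Finally, for (3) I would split $S_n(\cdot)x_n-S_\infty(\cdot)x_\infty=S_n(\cdot)(x_n-x_\infty)+(S_n(\cdot)-S_\infty(\cdot))x_\infty$. The first term is estimated by the uniform smoothing bound $\n S_n(\cdot)y\n_{C^\lambda([0,T];E_\delta)}\lesssim\n y\n_{E_\theta}$, valid because $\lambda+\delta<\theta$ (uniform in $n$ by (A1) and (A3)), applied to $y=x_n-x_\infty\to0$. For the second term, part (1) already gives convergence in $C([0,T];E_\theta)\embed C([0,T];E_\delta)$, while the same smoothing bound with any $\lambda'\in(\lambda,\theta-\delta)$ gives a uniform bound in $C^{\lambda'}([0,T];E_\delta)$; interpolating the two via $\n f\n_{C^\lambda}\lesssim\n f\n_{C^0}^{1-\lambda/\lambda'}\n f\n_{C^{\lambda'}}^{\lambda/\lambda'}$ yields convergence in $C^\lambda([0,T];E_\delta)$. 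The logical order of the write-up is thus: the two base-space facts, the transference lemma (the crux), then (1), (2) and (3) in turn.
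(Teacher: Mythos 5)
Your overall strategy is the paper's: reduce everything to strong convergence of the fractional powers on a dense subspace via an integral representation plus dominated convergence, with (A3) supplying the uniform domination (the paper uses the semigroup-based representation of $(-A_n)^{\theta-\eta}$ applied to $(-A_n)^\eta x$ for $x\in E_\eta$, $\theta<\eta<\half$, where you use the resolvent-based Balakrishnan formula on $E_\beta$; both work for the same reason, namely that (A3) gives an $n$-independent majorant exactly in the range $\alpha<\half$). Parts (2) and (3) are also in substance the paper's arguments (contour integrals for the singular term; a uniform $C^{\lambda+\eps}$ bound interpolated against $C^0$-convergence).

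The one step you have not actually proved is the ``companion'' statement $B_\infty^\alpha B_n^{-\alpha}\to I$ strongly on $E$, which you dismiss with ``one more application of (A3)'' and on which your proofs of (1) and (2) genuinely depend (you measure the $E_\theta$-norm by applying the \emph{fixed} operator $B_\infty^\theta$). It does not follow formally: writing $B_\infty^\alpha B_n^{-\alpha}y-y=B_\infty^\alpha(B_n^{-\alpha}y-B_\infty^{-\alpha}y)$, the difference tends to $0$ in $E$ and is bounded in $E_\alpha$, but for general $y\in E$ it is neither precompact in $E_\alpha$ nor bounded in any $E_{\alpha'}$ with $\alpha'>\alpha$, so neither your strong convergence of $B_n^\alpha$ on $E_\alpha$ nor a uniform-on-compacts upgrade applies to it. The statement is nevertheless true and can be closed as follows: for $y$ in the dense set $E_\eps$ (with $\alpha+\eps<\half$) one has $\n B_n^{\alpha+\eps}B_n^{-\alpha}y\n=\n B_n^{\eps}y\n\lesssim\n y\n_{E_\eps}$ uniformly in $n$ by (A3), so $u_n:=B_n^{-\alpha}y-B_\infty^{-\alpha}y$ is bounded in $E_{\alpha+\eps}$ and tends to $0$ in $E$; the moment inequality $\n B_\infty^\alpha u\n\lesssim\n u\n^{1-\alpha/(\alpha+\eps)}\n B_\infty^{\alpha+\eps}u\n^{\alpha/(\alpha+\eps)}$ then gives $B_\infty^\alpha u_n\to0$, and uniform boundedness of $B_\infty^\alpha B_n^{-\alpha}$ finishes the density argument. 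Alternatively you can avoid the companion altogether, as the paper does, by measuring the $E_\theta$-norm with the $n$-dependent operator $B_n^\theta$ (legitimate by (A3), uniformly in $n$) and then only needing $B_n^\theta\to B_\infty^\theta$ \emph{uniformly on compact subsets of} $E_\theta$ — applied to the compact orbit $\{S_\infty(t)x:\,0\le t\le T\}$ — which follows from your strong convergence plus the uniform bound in $\cL(E_\theta,E)$.
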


\begin{proof}
For notational convenience, we will assume that $w< 0$ so that we may choose $w^\prime = 0$ in the definition of the 
fractional domain spaces.

(1) Let $T>0$ be given. For $x \in E_\theta$ we have
\begin{equation}\label{eq.est1}
\begin{aligned}
 \|S_n(t)x - S_\infty (t)x\|_\theta & \simeq \|(-A_n)^\theta S_n(t)x -
 (-A_n)^\theta S_\infty(t)x\|_E \\
 & \leq  \|(-A_n)^\theta S_n(t)x - (-A_\infty )^\theta S_\infty(t)x\|_E
\\ &\quad\quad + 
\|(-A_\infty)^\theta S_\infty (t)x - (-A_n)^\theta S_\infty (t)x\|_E\, ,
\end{aligned}
\end{equation}
where the implied constants in the first line may be chosen independently of $n$ by (A3). Now observe that
for $0 \leq t \leq T$,
\begin{equation}\label{eq.est2}
\begin{aligned}
& \|(-A_n)^\theta S_n(t)x - (-A_\infty)^\theta S_\infty(t)x\|_E \\
 &\quad\leq  C_T \|(-A_n)^\theta x - (-A_\infty)^\theta x\|_E
+ \|S_n(t)(-A_\infty)^\theta x - S_\infty(t) (-A_\infty)^\theta x\|_E ,
\end{aligned}
\end{equation}
where $C_T := \sup\{ \|S_n(t)\|_{\cL (E)} \, : \, 0 \leq t \leq T\, , \, n \in \CN\} < \infty$ as a consequence 
of (A1). We note that the last term in \eqref{eq.est2} converges to 0 uniformly for $t \in [0,T]$ by (A1), (A2) 
and the Trotter-Kato theorem.\smallskip 

We now prove that, given a compact set $K \subseteq E_\theta$, we have $(-A_n)^\theta x \to (-A_\infty)^\theta x$ in 
$E$, uniformly for $x \in K$. This proves that also the first term on the right-hand side of \eqref{eq.est2} 
converges to 0, hence the first term on the right-hand side of \eqref{eq.est1} converges to 0. Moreover, 
since $\{S_\infty (t)x\, : \, 0 \leq t \leq T\}$ is compact in $E_\theta$ for all $x \in E_\theta$,
it also follows that the second term on the right-hand side of \eqref{eq.est1} converges to 0, whence the 
proof of (1) is complete.

In view of the uniform boundedness of $(-A_n)^\theta$ as operators in $\cL (E_\theta, E)$, to prove 
the convergence $(-A_n)^\theta x \to (-A_\infty)^\theta x$, uniformly on compact subsets of $E_\theta$, it actually 
suffices to prove strong convergence on a dense subset of $E_\theta$. To that end, 
pick $\eta \in (\theta, \half)$. 
Then $E_\eta$ is a dense subset of $E_\theta$, see \cite[Proposition 3.1.1]{haase}. Moreover, for $x \in E_\eta$ 
we have $(-A_n)^\theta x = (-A_n)^{\theta - \eta}(-A_n)^{\eta}x$, hence, by \cite[Corollary 3.3.6]{haase},
\[ (-A_n)^\theta x = \frac{1}{\Gamma (\eta - \theta )}\int_0^\infty t^{\eta - \theta -1} (-A_n)^\eta S_n(t)x \, dt\, .\]

Now note that 
\[ \|t^{\eta - \theta - 1}S_n(t)(-A_n)^\eta x\|_E \leq t^{\eta -\theta -1} Me^{wt}\sup_{n\in\CN}\|(-A_n)^\eta\|_{\cL (
 E_\eta, E)}\|x\|_{\eta}\, ,
\]
which is certainly integrable on $(0,\infty )$. Moreover, $(-A_n)^\eta S_n(t)x \to (-A_\infty)^\eta S_\infty(t)x$ 
for all $t \in (0,\infty )$ which, using (A1) and (A2), is easy to see by employing dominated convergence in 
a contour integral representation for $(-A_n)^\eta S_n(t)$.

Thus, by dominated convergence, $(-A_n)^\theta x$ converges in $E$ to $(-A_\infty)^\theta x$, for all $x \in E_\eta$.
This finishes the proof of (1).\medskip 

(2)  Fix $0<\eps < T$. We have
\[
\begin{aligned} 
 \|A_n S_n(t) - A_\infty S_\infty (t)x\|_\theta  & \simeq \|(-A_n)^{\theta}A_n 
S_n(t)x - (-A_n)^\theta A_\infty S_\infty (t)x\|_E\\
& \leq \| (-A_\infty)^{\theta+1} S_\infty(t)x - (-A_n)^{\theta +1} S_n (t)x \|\\
& \quad \quad  + \|(-A_\infty )^\theta A_\infty  
S_\infty (t)x - (-A_n)^\theta A_\infty  S_\infty (t)x\|_E\, .
\end{aligned}
\]
Convergence of the first term to 0, uniformly on $[\eps, T]$, can be proved by a contour integral argument 
in the extrapolation space $E_{-\kappa}$. Convergence of the second 
term follows from the convergence of $(-A_n)^\theta x \to (-A_\infty )^\theta x$, uniformly on the 
set $\{ A_\infty S_\infty (t)x\, : \, \eps \leq t \leq T\}$, which is a compact subset of $E_\theta$.\medskip 

(3) Pick $\eps >0$ such that $\lambda + \delta + \eps < \theta$. Then, for $t,s \in [0,T]$, we have
\[
 \begin{aligned}
 \|S_n(t)x_n - S_n(s)x_n\|_\delta & \, \simeq \| (-A_n)^\delta S_n(t)x_n - (-A_n)^\delta S_n(s)x_n\|_E\\
& \leq C (t-s)^{\lambda + \delta +\eps} \|(-A_n)^{\lambda + \delta + \eps} x_n \|_E \lesssim 
C (t-s)^{\lambda + \delta + \eps} \|x_n\|_\theta\, ,
 \end{aligned}
\]
where $C$ is a constant only depending on $M$ and $w$ in (A1). Furthermore, the implied constants in 
the first and the last step can be chosen independently of $n$. Since $x_n$ is convergent, hence bounded, in 
$C_\theta$, it follows that the sequence $(S_n(\cdot )x_n)_{n\in\CN}$ is bounded in $C^{\lambda +\eps}([0,T], E_\delta )$. 
Moreover, by (1), the continuity of the embedding $E_\theta \inject E_\delta$ and the uniform boundedness of 
$\bS_n$ on $E_\theta$, it follows that $S_n(\cdot )x_n \to S_\infty (\cdot )x_\infty$ in $C([0,T]; E_\delta )$. 
This clearly yields that $S_n(\cdot )x \to S_\infty (\cdot )x_\infty$ in $C^\lambda ([0,T], E_\delta )$.
\end{proof}

If, in addition, (A4) holds, we have the following result.

\begin{lem}\label{l.Bconv}
Assume (A1) -- (A4).
For all  $0\le \theta <\half$ and $x \in B$ we have
$S_n(\cdot )x \to S(\cdot )x$ in $C([0,T]; B)$.
\end{lem}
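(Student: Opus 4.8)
The plan is to establish the convergence first on the dense subspace $E_\theta \embed B$, where it is immediate from Lemma \ref{l.fracconv}(1), and then to propagate it to all of $B$ by a density argument powered by the uniform operator bound of (A4). To this end, fix $\theta \in (0,\half)$ for which (A5) provides a continuous dense embedding $E_\theta \embed B$; note that the conclusion does not depend on the particular choice of $\theta$.

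First I would treat $x \in E_\theta$. By Lemma \ref{l.fracconv}(1) we have $S_n(\cdot)x \to S_\infty(\cdot)x$ in $C([0,T]; E_\theta)$, and composing with the continuous inclusion $E_\theta \embed B$ turns this into $S_n(\cdot)x \to S_\infty(\cdot)x$ in $C([0,T]; B)$. This settles the claim on the dense set $E_\theta$.

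To pass to arbitrary $x \in B$, set $C_T := \sup\{\n S_n(t)\n_{\cL(B)} : t \in [0,T],\, n \in \OCN\}$, which is finite by (A4). Given $x \in B$ and $\eps > 0$, choose $y \in E_\theta$ with $\n x - y\n_B < \eps$. For every $t \in [0,T]$ and $n \in \CN$ we then have
\[ \n S_n(t)x - S_\infty(t)x\n_B \le \n S_n(t)(x-y)\n_B + \n S_n(t)y - S_\infty(t)y\n_B + \n S_\infty(t)(y-x)\n_B. \]
The outer two terms are at most $C_T \eps$, uniformly in $t$ and $n$, while $\sup_{t \in [0,T]} \n S_n(t)y - S_\infty(t)y\n_B \to 0$ as $n \to \infty$ by the previous step. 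Hence $\limsup_{n\to\infty}\sup_{t\in[0,T]}\n S_n(t)x - S_\infty(t)x\n_B \le 2C_T\eps$, and letting $\eps \downarrow 0$ finishes the argument.

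I do not anticipate a genuine obstacle here: all the ingredients are already in hand, namely the convergence on $E_\theta$ from Lemma \ref{l.fracconv}(1), the dense continuous embedding $E_\theta \embed B$ from (A5), and the uniform-in-$(t,n)$ norm bound on $B$ from (A4). The only point requiring care is that all three terms in the displayed estimate be controlled uniformly over $t \in [0,T]$; the uniform exponential bound of (A4) is precisely what renders the two approximation terms harmless and thereby allows the dense-subspace convergence to be upgraded to convergence on all of $B$.
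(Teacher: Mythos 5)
Your proposal is correct and follows essentially the same route as the paper: convergence on the dense subspace $E_\theta$ via Lemma \ref{l.fracconv}(1) together with the embedding $E_\theta \inject B$, then a standard three-term density argument using the uniform bound on $\|S_n(t)\|_{\cL(B)}$ from (A4). The paper's proof is just a two-sentence compression of exactly this argument (it, too, tacitly invokes the dense embedding of (A5)).
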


\begin{proof}
By Lemma \ref{l.fracconv}, we have $S_n(\cdot )x \to S(\cdot)x$ in $C([0,T]; E_\theta) \inject
C([0,T]; B)$ for all $x \in E_\theta$. By the density of $E_\theta$ in $B$ and 
the uniform 
exponential
boundedness of $\bS_n^B$, this extends to all $x \in B$.
\end{proof}

\bibliographystyle{plain}
\bibliography{reaction-diffusion}

\end{document}